\theoremstyle{definition}
\newtheorem{prop}{Proposition}
\newtheorem{lem}{Lemma}
\newtheorem{asm}{Assumption}
\newcommand{\lb}{\left(}
\newcommand{\rb}{\right)}
\newcommand{\T}[1]{\tilde{#1}}
\newcommand{\Req}[1]{(\ref{eq:#1})}
\newcommand{\NReq}[1]{\eqref{eq:#1}}
\newcommand{\Reqs}[2]{\eqref{eq:#1} and \eqref{eq:#2}}
\newcommand{\Rfig}[1]{Fig.\ \ref{fig:#1}}
\newcommand{\Rfigs}[2]{Figs.\ \ref{fig:#1} and \ref{fig:#2}}
\newcommand{\Rtab}[1]{Table\ \ref{tab:#1}}
\newcommand{\Lfig}[1]{\label{fig:#1}}
\newcommand{\Leq}[1]{\label{eq:#1}}
\newcommand{\Ltab}[1]{\label{tab:#1}}
\newcommand{\Rsec}[1]{Sec.~\ref{sec:#1}}
\newcommand{\Lsec}[1]{\label{sec:#1}}
\newcommand{\be}{\begin{eqnarray}}
\newcommand{\ee}{\end{eqnarray}}
\newcommand{\no}{\nonumber}
\begin{document}

\title{Approximate Modeling for Supercritical Galton--Watson Branching Processes with Compound Poisson--Gamma Distribution}

\author{Kyoya Uemura, Tomoyuki Obuchi, and Toshiyuki Tanaka}
\affiliation{Graduate School of Informatics, Kyoto University, Kyoto 606-8501, Japan}


\begin{abstract}
We study asymptotic properties of supercritical Galton--Watson (GW) branching processes in the asymptotic where the mean of the offspring distribution approaches 1 from above.
We show that the population-size distribution of the GW branching processes at a sufficiently large generation in this asymptotic can be approximated by a compound Poisson--gamma distribution. 
Numerical experiments revealed that the compound Poisson--gamma models were in good agreement with the corresponding GW models for sufficiently large generations under a reasonable parameter regime.
Our results can be regarded as supporting the use of the compound Poisson--gamma model as a model for cascaded multiplication processes.
\end{abstract}

\maketitle

\section{Introduction}
\Lsec{Intro}

A Galton--Watson (GW) branching process (also called a Bienaym\'{e}--Galton--Watson branching process) is a stochastic model that describes population evolution over generations, which was originally developed to study propagation and extinction probabilities of family names~\cite{Bienayme1845, Watson1875}.
In the GW process, each individual in the $n$th generation independently produces a random number of offspring in the $(n+1)$th generation.
Let $Z_n$ denote the population size of $n$th generation. 
Then, $(Z_n)_{n=0,1,\ldots}$ evolves as follows:
\begin{align}
Z_{n+1} = \sum_{i=1}^{Z_n} X_{n}^{(i)}, \quad n=0,1,\ldots,
\Leq{GW process}
\end{align}
where $X_{n}^{(i)}$ denotes the number of offspring produced by the individual $i\in\{1,\ldots,Z_n\}$ in the $n$th generation, and where $\{X_{n}^{(i)}\}$ are independent and identically distributed (i.i.d.) according to a certain offspring distribution with mean $\lambda$.
Unless otherwise stated, we assume $Z_0 \equiv 1$.
When $\lambda > 1$ (referred to as the supercritical case), the GW process provides a reasonable model for cascaded multiplication processes, where the population increases with expected gain $\lambda$ in each generation, such as electron multiplication.
As a result, the supercritical GW process is now widely applied to model the population dynamics of these multiplication processes in fields such as biology and physics~\cite{Tantillo2024, Ventura2024, Bogdanov2021, Burden2016, Neves2006, Taneyhill1999, Lombard1961, Dietz1965, Prescott1966, Dietz1978}. 

One of the main interests in the supercritical GW process is regarding the distribution of $Z_{n}$ when $n$ is large.
For instance, in the context of electron multiplication, this distribution can be interpreted as the single electron response (SER) distribution (also known as the pulse height distribution) of an electron multiplier (EM) with a sufficient number of dynodes, providing insights into properties of response signals of detectors using EMs.
A primary approach for studying the distribution of $Z_{n}$ for large $n$ is to consider an alternative random variable defined by normalizing $Z_{n}$ with the total gain $\mathbb{E}[Z_n] = \lambda^n$ up to the $n$th generation~\cite{Harris1963, Athreya1972}.
Let
\begin{align}
W_{n} = \frac{Z_{n}}{\lambda^{n}}.
\Leq{W_n}
\end{align}
One then has
\begin{align}
  \mathbb{E}[W_{n+1}\mid W_1,\ldots,W_n]
  &=\lambda^{-n-1}\mathbb{E}[Z_{n+1}\mid Z_1,\ldots,Z_n]
  \nonumber\\
  &=\lambda^{-n-1}\mathbb{E}[Z_{n+1}\mid Z_n]
  \nonumber\\
  &=\lambda^{-n}Z_n=W_n,
  \label{eq:martingale_eq}
\end{align}
which shows that the sequence $(W_n)_{n=0,1,\ldots}$ is a nonnegative martingale, and consequently, the martingale convergence theorem~\cite{Doob1953} tells us that, as $n\to\infty$, $(W_n)_{n=0,1,\ldots}$ converges to a random variable $W$ almost surely.
In the supercritical case, the limiting distribution $\mathbb{P}(W)$ is nondegenerate under certain mild conditions~\cite{KestenStigum1966, Athreya1972}, providing an indirect evaluation of the distribution of $Z_{n}$ for large $n$.

However, the limiting distribution $\mathbb{P}(W)$ is generally not available in a closed form, even when the offspring distribution is simple, such as the Poisson offspring distribution.
This difficulty makes it challenging to apply the GW process model to downstream data analyses.
In practice, more tractable models based on empirical insights into the dataset (e.g., non-negativity, overdispersion, and excess zeros) are commonly adopted~\cite{Bellamy1994, Dossi2000, Ibrahimi2014}.
For example, Bellamy et al.~\cite{Bellamy1994} proposed describing the operation of a photomultiplier as consisting of two independent processes: photodetection and amplification.
They then formulated the response of a photomultiplier by using a compound Poisson model, where the number of photoelectrons on the photodetector is modeled by a Poisson distribution, and the amplification process is characterized by an SER distribution.
Subsequent studies~\cite{KalousisDeAndreBaussanDracos2020, WengZhangWuMaXuQianWangChen2024} discussed heuristic modeling of the SER distribution using a gamma distribution,
resulting in a compound Poisson-gamma (CPG) distribution as a model for the photomultiplier response. 
However, the theoretical basis of these modeling approaches has rarely been discussed.

In this work, we study the asymptotics of supercritical GW processes as they approach criticality, that is, as the mean $\lambda$ of the offspring distribution approaches 1 from above (i.e., $\lambda \downarrow 1$).
In the asymptotic regime $\lambda \downarrow 1$ with $Z_0 \equiv 1$ (Condition I), we derive a probability distribution in closed form whose cumulants match those of the limiting distribution $\mathbb{P}(W)$ up to the leading-order terms.
It is also shown that the derived distribution belongs to a class of CPG distributions. This is significant from a practical perspective, as the CPG distributions can be reparameterized as the Tweedie family, a subclass of exponential dispersion models (EDMs)~\cite{Jorgensen1987} which are known to possess several practically desirable properties, similar to those of exponential families~\cite{BarndorffNielsen2014,Efron2023}.
Moreover, we present a similar result for the case where $Z_0$ follows a certain distribution with mean $\lambda_0$, where $\lambda_0$ does not need to be close to unity (Condition II).
Note that Condition II reflects the standard assumption in response signal modeling of EMs~\cite{Lombard1961, Dietz1965}, in which a particle incident to the first-stage dynode produces secondary electrons with a gain different from that of the subsequent stages. 
Numerical experiments revealed that these CPG models were in good agreement with the corresponding GW models for large $n$ under a reasonable parameter regime.
These results provide mathematical and empirical support for the use of CPG distributions as a model for cascaded multiplication processes.


The remainder of this paper is organized as follows.
Section~\ref{sec:Theoretical Results} discusses our main theoretical results
which states that the limiting distribution $\mathbb{P}(W)$
in the asymptotic $\lambda\downarrow1$ can be
approximated by a CPG distribution
under Conditions I and II.
Section~\ref{sec:Numerical Verification} verifies 
the main results via comparing the population-size distribution
$\mathbb{P}(Z_n)$ with large $n$ and the CPG distribution.
Section~\ref{sec:Discussion} discusses tail behaviors of the
limiting distribution $\mathbb{P}(W)$. 
It also discusses the diffusion approximation and a large-$n$ analysis,
both of which further support the use of the CPG distribution as a model of
the population-size distribution of the GW processes when $n$ is large. 
Section~\ref{sec:Conclusion} concludes this paper. 

\section{Theoretical Results}
\Lsec{Theoretical Results}

\subsection{Condition I: $Z_0 \equiv 1$}
\Lsec{Condition I}

In this subsection, we discuss the limiting distribution for the supercritical GW process when $Z_0 \equiv 1$ (Condition I).
Let $f(s)$ and $\phi(t)$ be the probability generating function and the moment generating function of the offspring distribution, respectively: 
\begin{align}
f(s) &= \mathbb{E}[s^{X_{n}^{(i)}}] = \sum_{k=0}^{\infty} \mathbb{P}(X_{n}^{(i)}=k) s^k,
\Leq{f}\\
\phi(t) &= \mathbb{E}[e^{tX_{n}^{(i)}}] = f(e^t).
\Leq{phi}
\end{align}
Let $f_{n}(s)$ denote the probability generating function of $Z_{n}$.
From \NReq{GW process}, one has 
\begin{align}
f_{n+1}(s) &= \mathbb{E}_{Z_{n+1}} [s^{Z_{n+1}}] \no \\
&= \mathbb{E}_{Z_{n}} \left[ \left\{ \mathbb{E}_{X_n^{(i)}}[s^{X_{n}^{(i)}}] \right\}^{Z_n} \right] ~~(\because \text{$\{X_n^{(i)}\}$ is i.i.d.}) \no \\
&= f_n\left( f(s) \right), \quad n\geq1,
\Leq{f_n recursion}
\end{align}
where
\begin{align}
f_0(s) = s; \quad f_1(s) = f(s).
\end{align}
By recursively applying \NReq{f_n recursion}, one can in principle obtain the probability generating function $f_n(s)$ of $Z_{n}$ for any $n$.
In practice, however, for large $n$ its formula has a higher-order nested structure as
\begin{align}
f_n(s) = (\overbrace{f\circ f\circ\cdots\circ f}^{\text{$n$ times}})(s),
\Leq{PGF_nested}
\end{align}
and this mathematical complexity hinders the application of the GW model to general data analysis unless $f$ has a very specific structure~\cite{Harris1963, Sevastyanov1971, SagitovLindo2016}.

Here, to evaluate the probability distribution of $Z_{n}$ for large $n$, we follow the classical approach in the theory of the GW branching processes and begin our discussion by considering the normalized random variable $W_n$ defined by \NReq{W_n}, instead of $Z_{n}$. 
According to the Kesten-Stigum theorem~\cite{KestenStigum1966} (see also \cite[Theorem I.10.1]{Athreya1972}), when $\lambda>1$, the sequence $(W_n)_{n=0,1,\ldots}$ converges almost surely (a.s.) to a nondegenerate random variable $W$, 
if and only if $\mathbb{E}[X_n^{(i)} \log X_n^{(i)}]<\infty$ holds.
For the limiting random variable $W$, one has $\mathbb{E}[W]=1$, 
and the moment generating function $M_{W}(t)$ of $W$ satisfies the following functional equation
\begin{align}
M_{W}(\lambda t) = f\left( M_{W}(t) \right).
\Leq{functional equation1}
\end{align}
It can further be rewritten in terms of the cumulant generating function 
$K_{W}(t)=\log M_{W}(t)$ of $W$ as
\begin{equation}
  K_{W}(\lambda t) = \log f(e^{K_{W}(t)})
  =\psi(K_{W}(t)),
\Leq{functional equation_1_2}
\end{equation}
where $\psi(t)=\log\phi(t)=\log f(e^t)$ denotes the cumulant generating function of the offspring distribution.
These functional equations lay the basis for studying 
properties of the limiting random variable $W$.

One can notice that
the functional equation~\eqref{eq:functional equation_1_2} is invariant under
scaling of $t$, which implies that the cumulant generating function
of $aW$ for an arbitrary $a>0$ satisfies the same functional equation. 
With a modest amount of foresight, we consider the rescaled random variable $\bar{W}$ defined as
\begin{align}
\bar{W} = (\lambda-1) W,
\Leq{def bar_W}
\end{align}
for which $\mathbb{E}(\bar{W})=\lambda-1$,
and study properties of $\bar{W}$ via 
the following functional equation corresponding to \Req{functional equation_1_2}:
\begin{align}
\bar{K}(\lambda t) = \psi(\bar{K}(t)),
\Leq{functional equation2}
\end{align}
where $\bar{K}(t)$ denotes the cumulant generating function of $\bar{W}$.

Solving~\eqref{eq:functional equation2} analytically is intractable
for most choices of the offspring distribution. 
We therefore resort to the perturbation method~\cite{Holmes2013,Hinch1991} to study the solution of~\eqref{eq:functional equation2} as $\lambda \downarrow 1$. 
Let 
\begin{align}
\epsilon = \lambda-1
\end{align}
be the perturbation parameter, 
and assume that, for a sufficiently small $\epsilon \geq 0$, the cumulant generating function $\psi(t)$ of the offspring distribution is expanded in $\epsilon$ as
\begin{align}
\psi(t) = \psi_0 (t) + \epsilon \psi_1 (t) + \mathcal{O}(\epsilon^2),
\label{eq:psi_expansion_epsilon}
\end{align}
where 
\begin{align}
\psi_0 (t) = \sum_{k=1}^{\infty} \frac{\kappa_k^*}{k!}t^k = t + \sum_{k=2}^{\infty} \frac{\kappa_k^*}{k!}t^k
\end{align}
is the cumulant generating function of the offspring distribution at $\epsilon = 0$ (i.e., at $\lambda = 1$),
where $\kappa_1^*=\mathbb{E}[X_n^{(i)}]=\lambda=1$ holds.
We also assume that $\kappa_k^*$ for $k\ge2$ are such that
the power series defining $\psi_0(t)$ is convergent in a neighborhood of $t=0$. 
On the other hand, for $\epsilon\ge0$ one should have
\begin{equation}
  \mathbb{E}[X_n^{(i)}]
  =1+\epsilon=\psi'(0)=\psi_0'(0)+\epsilon\psi_1'(0)+\mathcal{O}(\epsilon^2),
\end{equation}
which yields $\psi_1'(0)=1$. 
Then, \eqref{eq:psi_expansion_epsilon} can be rewritten as 
\begin{align}
\psi(t) = \psi_0 (t) + \epsilon \left(t + \tilde{\psi}_1 (t) \right) + \mathcal{O}(\epsilon^2),
\label{eq:psi_expansion_epsilon2}
\end{align}
where $\tilde{\psi}_1 (t) = \mathcal{O}(t^2)$ as $t \to 0$.

We next consider expansion of the cumulant generating function $\bar{K}(t)$ of $\bar{W}$ in $\epsilon$.
 When $\epsilon=0$, one has $\bar{W}=\epsilon W=0$ with probability 1, yielding $\bar{K}(t)\equiv0$ which satisfies~\eqref{eq:functional equation2} trivially. 
We can therefore assume the following perturbative expansion
\begin{align}
\bar{K}(t)=\epsilon \bar{K}_1(t) + \epsilon^2 \bar{K}_2(t) + \mathcal{O}(\epsilon^3),
\label{eq:barK_expansion_epsilon}
\end{align}
where $\bar{K}_1(t)$ and $\bar{K}_2(t)$ denote the first- and second-order coefficients, respectively.
We note that for any $\epsilon\ge0$ one has $\bar{K}'(0)=\mathbb{E}[\bar{W}]=\epsilon$,
which implies $\bar{K}_1'(0)=1$. 
Substituting~\eqref{eq:barK_expansion_epsilon} into the left-hand side of~\eqref{eq:functional equation2} yields, up to the second order in $\epsilon$,
\begin{align}
\bar{K}(\lambda t) &= \bar{K}\left((1+\epsilon) t \right) \nonumber \\
&= \bar{K}(t) + \epsilon t \bar{K}'(t) + \frac{\epsilon^2 t^2}{2}\bar{K}''(t) + \mathcal{O}(\epsilon^3) \nonumber \\
&= \epsilon \bar{K}_1(t) + \epsilon^2 \left( \bar{K}_2(t) + t \bar{K}'_1(t) \right) + \mathcal{O}(\epsilon^3).
\label{eq:perturb_left}
\end{align}
Using~\eqref{eq:psi_expansion_epsilon2} and~\eqref{eq:barK_expansion_epsilon}, the right-hand side of~\eqref{eq:functional equation2} is calculated, up to the second order in $\epsilon$, as
\begin{align}
\psi(\bar{K}(t)) &= \psi_0\left( \epsilon \bar{K}_1(t) + \epsilon^2 \bar{K}_2(t) + \mathcal{O}(\epsilon^3) \right) \nonumber \\
&~\quad+ \epsilon \left( \epsilon \bar{K}_1(t) + \mathcal{O}(\epsilon^2) \right) +  \mathcal{O}(\epsilon^3) \nonumber \\
&= \epsilon \bar{K}_1(t) + \epsilon^2 \left( \bar{K}_2(t) + \frac{\kappa_2^*}{2}\{\bar{K}_1(t)\}^2 + \bar{K}_1(t) \right) \nonumber \\
&~\quad+ \mathcal{O}(\epsilon^3).
\label{eq:perturb_right}
\end{align}
Equating \eqref{eq:perturb_left} and \eqref{eq:perturb_right}, the zeroth-order terms vanish on both sides, and the first-order terms coincide.
Comparing the coefficients of $\epsilon^2$ yields the following ordinary differential equation for $\bar{K}_1(t)$:
\begin{align}
t \bar{K}'_1(t) = \frac{\kappa_2^*}{2}\{\bar{K}_1(t)\}^2 + \bar{K}_1(t).
\label{eq:ODE_barK_1}
\end{align}
We solve it under the condition $\bar{K}_1'(0)=1$,
yielding
\begin{align}
\bar{K}_1(t) = \frac{t}{1-\frac{\kappa_2^* t}{2}}.
\label{eq:barK_1_solution}
\end{align}
Therefore, the perturbative approximation to the solution of the functional equation~\eqref{eq:functional equation2} as $\epsilon \downarrow 0$ (i.e., $\lambda \downarrow 1$), up to the first order in $\epsilon$, is obtained as 
\begin{align}
\bar{K}(t) &\approx \epsilon \bar{K}_1(t)  \nonumber \\
&=\frac{(\lambda-1)t}{1-\frac{\kappa_2^*t}{2}} =: \tilde{K}(t).
\label{eq:T_K solution}
\end{align}
It should be emphasized that the same result~\eqref{eq:T_K solution} can also be derived rigorously under weaker assumptions, without resorting to the perturbation method; the details are provided in Sec.~S1 of the Supplementary Material~\footnote{See Supplemental Material for the detailed discussion of the derivation of the approximate solution $\tilde{K}(t)$ without resorting to the perturbation method.}.

From the preceding argument, as $\lambda \downarrow 1$, the cumulant generating function of $W=\bar{W}/(\lambda-1)$ can be approximated as
\begin{equation}
  K_{W}(t)
  \approx \tilde{K}\left(\frac{t}{\lambda-1}\right)
  =\frac{t}{1-\frac{\kappa_2^*}{2(\lambda-1)}t}.
  \label{eq:cgf_W}
\end{equation}
A remarkable fact about the result is that the asymptotic cumulant generating function $\T{K}(t)$ is universal, in the sense that it depends on the offspring distribution only via its variance $\kappa_2^*$ in the limit $\lambda\downarrow1$. 

We next show that the cumulant generating function $\tilde{K}\left(\frac{t}{\lambda-1}\right)$ in~\NReq{cgf_W} coincides with that of the CPG distribution via an appropriate reparameterization.
A random variable $S$ following a CPG distribution is defined as 
\begin{align}
\Leq{CPG_def0}
S &= \sum_{i=1}^{N} Y_i, \\
N \sim {\rm Poisson}(\mu)&;~ Y_i \overset{\text{i.i.d.}}{\sim} {\rm Gamma}(\alpha, \tau),
\Leq{CPG_def}
\end{align}
where ${\rm Poisson}(\mu)$ denotes a Poisson distribution with mean $\mu$, and where ${\rm Gamma}(\alpha, \tau)$ denotes a gamma distribution with shape parameter $\alpha$ and scale parameter $\tau$.
Via a slight abuse of notation~\footnote{Strictly speaking, the CPG distribution does not have a probability density function, because it has the mass $e^{-\mu }>0$ at 0.},
we let $P_{\mathrm{CPG}}(\cdot;\mu,\alpha,\tau)$ denote
the probability density function of the CPG distribution
with parameters $(\mu,\alpha,\tau)$. 
The cumulant generating function $K_{S}(t)$ of $S$ is given by
\begin{align}
K_{S}(t) &= \log\left( \mathbb{E}_{S}[e^{tS}] \right) \no \\
&= \log\left( \mathbb{E}_{N}\left[ \left\{ \mathbb{E}_{Y_i}[e^{tY_i}] \right\}^{N} \right] \right) \no \\
&= \log\left( \mathbb{E}_{N}\left[ e^{NK_{Y_i}(t)} \right] \right) \no \\
&= K_{N}\left(K_{Y_i}(t)\right) \no \\
&= \mu \left\{ (1-\tau t)^{-\alpha} - 1 \right\},
\Leq{K_S}
\end{align}
where $K_{N}(t)$ and $K_{Y_i}(t)$ are the cumulant generating functions of $N$ and $Y_i$, given by
\begin{align}
K_{N}(t) &= \mu (e^t-1), \\
K_{Y_i}(t) &= \log (1-\tau t)^{-\alpha},
\end{align}
respectively.
Comparing \NReq{K_S} with~\eqref{eq:cgf_W}, we find that the cumulant generating function~\eqref{eq:cgf_W} coincides with that of the CPG distribution
$P_{\mathrm{CPG}}(\cdot;\mu,\alpha,\tau)$ with 
\begin{align}
\mu = \frac{2(\lambda-1)}{\kappa_2^*};~ \alpha = 1;~ \tau = \frac{\kappa_2^*}{2(\lambda-1)}.
\Leq{CPG param}
\end{align}

Consequently, it is shown that in the asymptotic regime $\lambda \downarrow 1$ the limiting distribution $\mathbb{P}(W)$ of the GW process on Condition I can be approximated by the CPG distribution with the parameters \NReq{CPG param}.


\subsection{Condition II: Random $Z_0$}
\Lsec{Condition II}

Motivated by practical applications, in this subsection, we consider the case where $Z_0$ follows a certain distribution $\mathbb{P}(Z_0)$ with mean $\lambda_0$ (Condition II).
Note that, unlike $\lambda$, $\lambda_0$ is not required to be close to unity.

Let us assume that the cumulant generating function $K_{Z_0}(t)$ of $Z_0$ exists:
\begin{align}
K_{Z_0}(t) = \sum_{\nu=1}^{\infty} \frac{\kappa_{Z_0, \nu}}{\nu!}t^\nu,
\end{align}
where $\kappa_{Z_0, \nu}$ denotes the $\nu$th cumulant of $Z_0$ and $\kappa_{Z_0, 1} = \lambda_0$.
Then, consider a random variable $V_n$ defined by
\begin{align}
V_n = \frac{Z_n}{\lambda^n}.
\Leq{def V_n}
\end{align}
Applying the result of the discussion in \Rsec{Condition I}, as $n \to \infty$ and $\lambda \downarrow 1$, one can approximately model the limiting distribution of $V_n$ as 
\begin{align}
V &= \sum_{i=1}^{Z_0} W^{(i)}, \Leq{T_W1}\\
Z_0 &\sim \mathbb{P}(Z_0),
\Leq{Z_0}
\end{align}
where $V$ denotes the limit of $V_n$ as $n \to \infty$.
$\{W^{(i)}\}$ denotes a set of i.i.d. random variables that follow the CPG distribution with parameters \NReq{CPG param}, that is, for any $i$, 
\begin{align}
W^{(i)} = \sum_{j=1}^{N^{(i)}} \Gamma_j^{(i)},
\end{align}
where
\begin{align}
N^{(i)} &\overset{\text{i.i.d.}}{\sim} {\rm Poisson}\left( \frac{2(\lambda-1)}{\kappa_2^*} \right), \Leq{N_i}\\
\Gamma_j^{(i)} &\overset{\text{i.i.d.}}{\sim} {\rm Gamma}\left(1, \frac{\kappa_2^*}{2(\lambda-1)} \right).
\end{align}

Here, let
\begin{align}
N_0 = \sum_{i=1}^{Z_0} N^{(i)}.
\Leq{N_0 def}
\end{align}
Then, \NReq{T_W1} can be rewritten using $N_0$ as
\begin{align}
V &= \sum_{j=1}^{N_0} \Gamma_j, \Leq{T_W2} \\
\Gamma_j &\overset{\text{i.i.d.}}{\sim} {\rm Gamma}\left(1, \frac{\kappa_2^*}{2(\lambda-1)} \right). \Leq{gamma_j}
\end{align}
From \NReq{N_0 def} and \NReq{N_i}, the cumulant generating function $K_{N_0}(t)$ of $N_0$ can be obtained in a similar manner to \NReq{K_S}, as follows:
\begin{align}
K_{N_0}(t) &= K_{Z_0}(K_{N^{(i)}}(t)) \no \\
&= K_{Z_0} \left( \frac{2(e^{t}-1)}{\kappa_2^*} (\lambda-1) \right),
\Leq{K_N_0}
\end{align}
where $K_{N^{(i)}}(t)$ is the cumulant generating function of the Poisson random variable $N^{(i)}$ \NReq{N_i}.
Noting our assumption $\lambda-1\ll1$, from the Taylor series expansion of $K_{N_0}(t)$ in the quantity $\frac{2(e^{t}-1)}{\kappa_2^*} (\lambda-1)$, one has, for any $t$, 
\begin{align}
\lim_{\lambda \downarrow 1} \frac{K_{N_0}(t)}{\lambda-1} = \frac{2 \lambda_0}{\kappa_2^*} (e^{t}-1).
\end{align}
Thus, as $\lambda \downarrow 1$, 
\begin{align}
K_{N_0}(t) = \frac{2 \lambda_0 (\lambda-1)}{\kappa_2^*} (e^{t}-1),
\end{align}
which is nothing but the cumulant generating function
of the Poisson distribution with mean $\frac{2 \lambda_0 (\lambda-1)}{\kappa_2^*}$.
Therefore, one has
\begin{align}
N_0 \sim \text{Poisson}\left(\frac{2 \lambda_0 (\lambda-1)}{\kappa_2^*} \right)
\Leq{N_0 approx model}
\end{align}
in the limit $\lambda \downarrow 1$.
From \NReq{T_W2}, \NReq{gamma_j}, and \NReq{N_0 approx model}, it is shown that as $\lambda \downarrow 1$ the limiting distribution $\mathbb{P}(V)$ can be approximated by the CPG distribution 
$P_{\mathrm{CPG}}(\cdot;\mu,\alpha,\tau)$ with 
\begin{align}
\mu = \frac{2 \lambda_0 (\lambda-1)}{\kappa_2^*};~ \alpha = 1;~ \tau = \frac{\kappa_2^*}{2(\lambda-1)}.
\Leq{CPG param II}
\end{align}
It should be noted that the above CPG distribution
approximating $\mathbb{P}(V)$ depends on the distribution of $Z_0$
only via its mean $\lambda_0$. 

\section{Numerical Verification}\Lsec{Numerical Verification}

\subsection{Overview and Computational Issues}
Here, we examined the above theoretical argument by comparing numerically evaluated distributions of $Z_n$ for large $n$ with the CPG distributions. 
At the origin, since both $Z_n$ and the CPG random variable have probability mass, they can be compared as
\begin{align}
\mathbb{P}(Z_{n}=0) &\approx P_{\rm CPG}\lb 0 ;\mu, \alpha, \tau \rb
\no \\
&= e^{-\mu},
\Leq{CPG_comparison_at_origin}
\end{align}
where $(\mu, \alpha, \tau)$ are given by \eqref{eq:CPG param} under Condition I and by \eqref{eq:CPG param II} under Condition II.
On the other hand, on the interval $(0, \infty)$, a direct comparison of these distributions as above is not possible because the CPG random variable (and $W$) is absolutely continuous, whereas $Z_n$ is integer-valued and hence discrete.
Thus, we compare them in terms of probability values via appropriate scaling as follows:
\begin{align}
\mathbb{P}(Z_{n}=\ell) \approx 
\frac{1}{\lambda^n} P_{\rm CPG}\lb \frac{\ell}{ \lambda^n} ;\mu, \alpha, \tau \rb
\end{align}
for $\ell \in \mathbb{N}$, or equivalently,
\begin{align}
  P_{\rm CPG}\lb x;\mu,\alpha,\tau\rb\approx\lambda^n\mathbb{P}(Z_n=\lambda^nx)
\Leq{CPG_comparison}
\end{align}
for $x$ such that $\lambda^nx\in\mathbb{N}$, 
where $(\mu, \alpha, \tau)$ are given by \eqref{eq:CPG param} under Condition I and by \eqref{eq:CPG param II} under Condition II.
We performed quantitative verifications of these relations \Req{CPG_comparison_at_origin} and \Req{CPG_comparison} in this section.
We investigated both the Poisson and geometric offspring distributions
in order to demonstrate the universality of our results with respect to the choice of offspring distribution. 

For the Poisson offspring distribution, an efficient recursion formula to compute the distribution of $Z_n$ is known~\cite{Lombard1961, Dietz1965} and we employed it for computing $\mathbb{P}(Z_{n}=\ell)$.
For the sake of self-containedness, we provided the derivation of the recursion formula for Condition I in Appendix~\ref{sec:Recursion in the condition I} and for Condition II in Appendix~\ref{sec:Recursion in the condition II}, where we extended it to include certain negative binomial offspring distributions.
When computing $\mathbb{P}(Z_{n}=\ell)$ with the recursion formulas, the computational cost rapidly increases as $n$ grows, which imposes a limitation on the practically manageable support size of the distribution. Unless otherwise specified, we considered the range $\ell \leq \ell_{\mathrm{max}}$ with $\ell_{\mathrm{max}}=10^6$ for the Poisson case. Related to this computational issue, it should be noted that the maximum value of $n$ for which the simulation remains sufficiently accurate heavily depends on both the support size and the value of $\lambda$. In the results below, we show only the range of $n$ for which $\sum_{\ell=0}^{\ell_{\rm max}} \mathbb{P}(Z_n=\ell) > 0.99$ is satisfied. 

Meanwhile, for the geometric offspring distribution, the evaluation of $\mathbb{P}(Z_{n}=\ell)$ is much easier, since the probability generating function $f_n$ takes a linear fractional form for any generation $n$, allowing its parameters to be computed in a simple iterative manner; the distribution of $Z_n$ is calculated via a series expansion of the linear fractional form with the computed parameters~\cite{Sevastyanov1971}. An outline of the calculations for this case is provided in Appendix \ref{sec:Recursion Specific to the Geometric Distribution}. Thanks to this method, the computational difficulty encountered in the Poisson case is significantly alleviated in the geometric case.

\subsection{Results Under Condition I} 
We let $P_n(\ell)$ denote the probability $\mathbb{P}(Z_n=\ell)$
under Condition I.
The Poisson and geometric offspring distributions were considered. 

The results for the Poisson offspring distribution with $\lambda \in \{1.1, 1.5, 5.0\}$ are shown in \Rfig{Poisson_condI}.
\begin{figure*}[htbp]
\begin{center}
\includegraphics[width=0.67\columnwidth]{./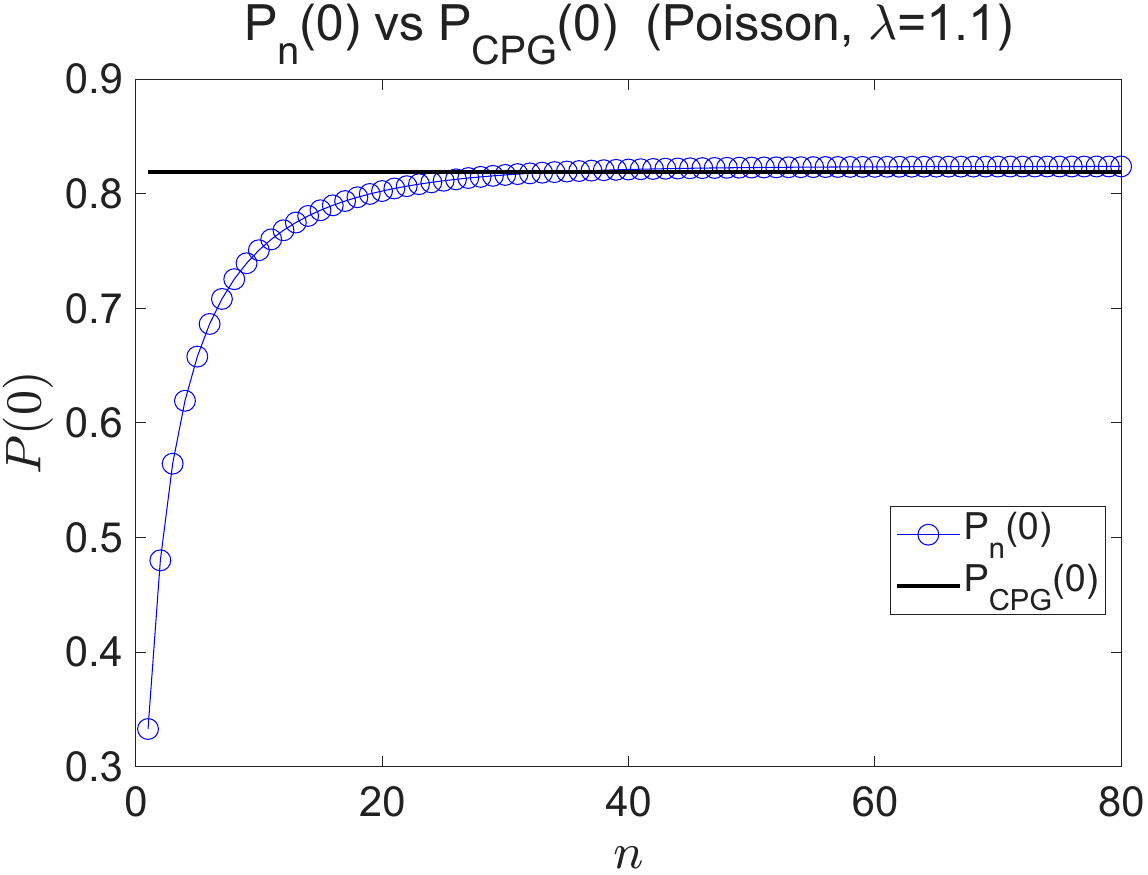}
\includegraphics[width=0.67\columnwidth]{./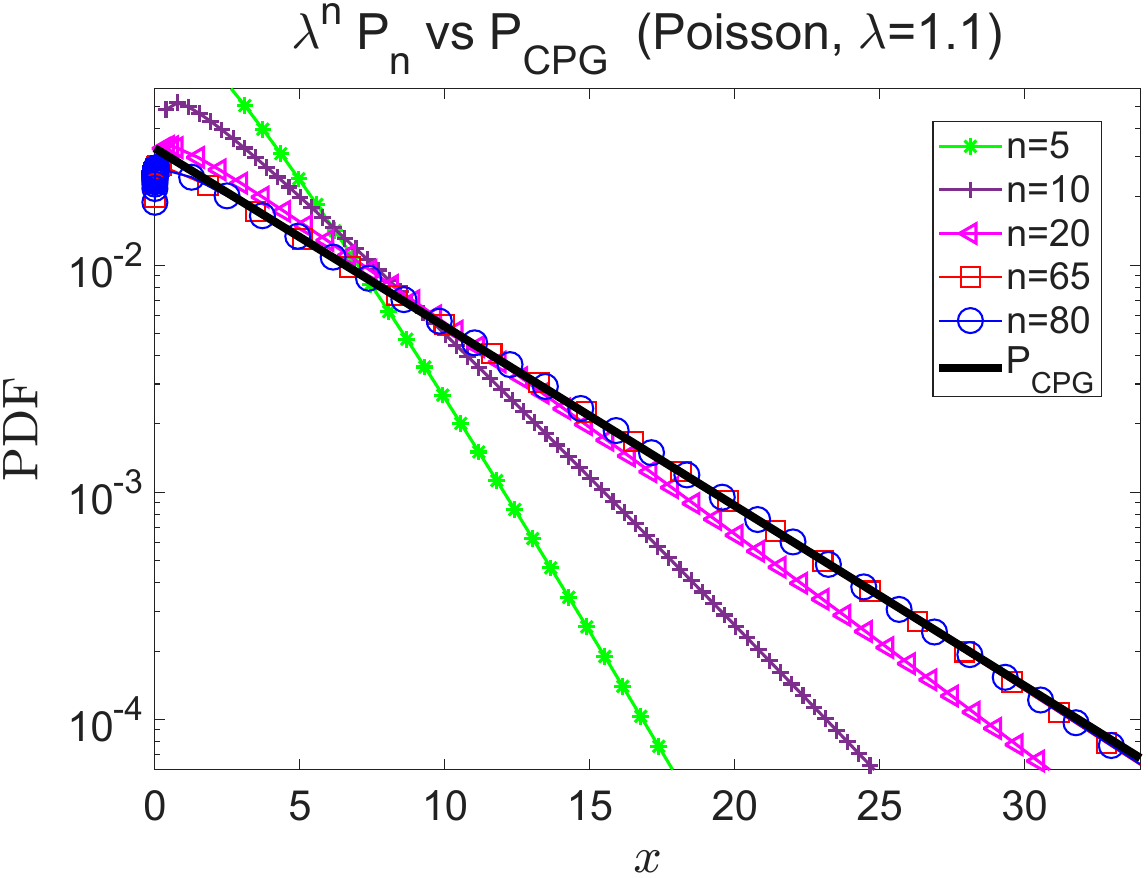}
\includegraphics[width=0.67\columnwidth]{./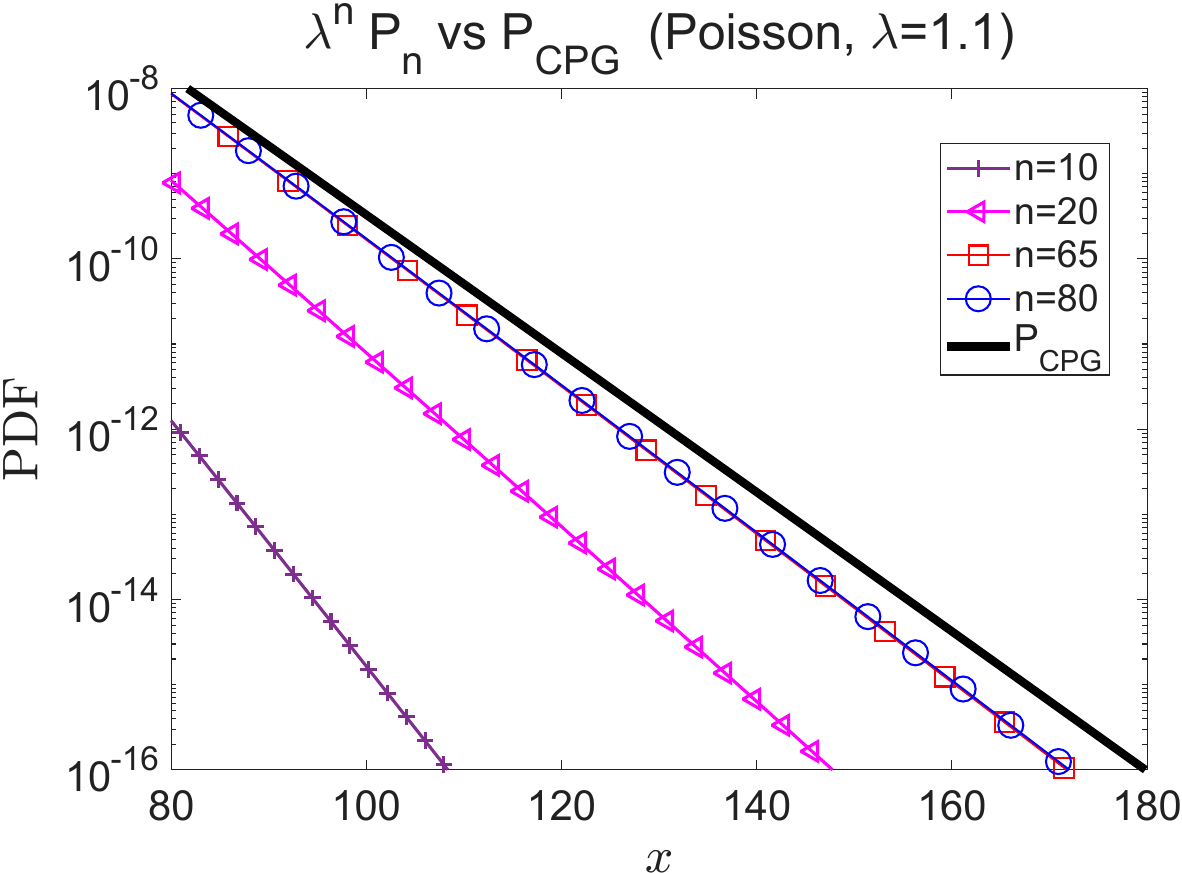}
\includegraphics[width=0.67\columnwidth]{./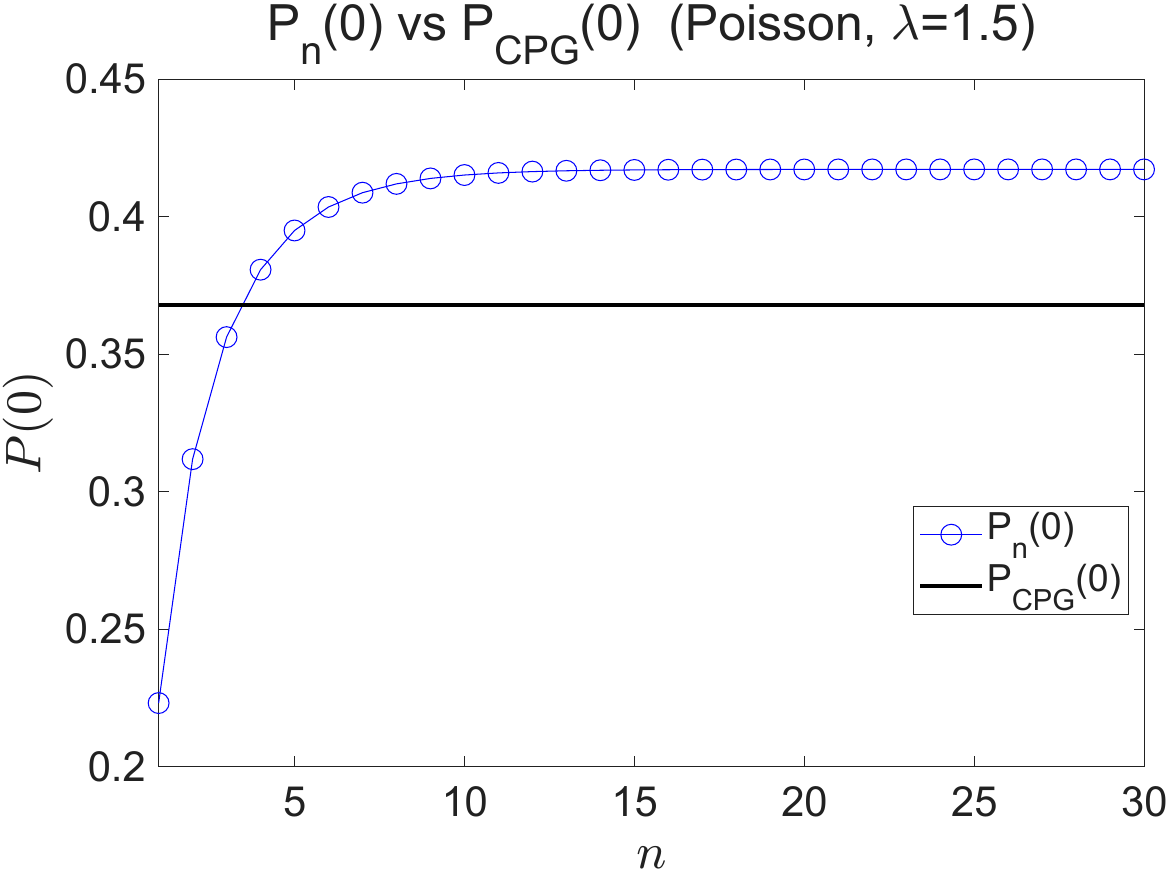}
\includegraphics[width=0.67\columnwidth]{./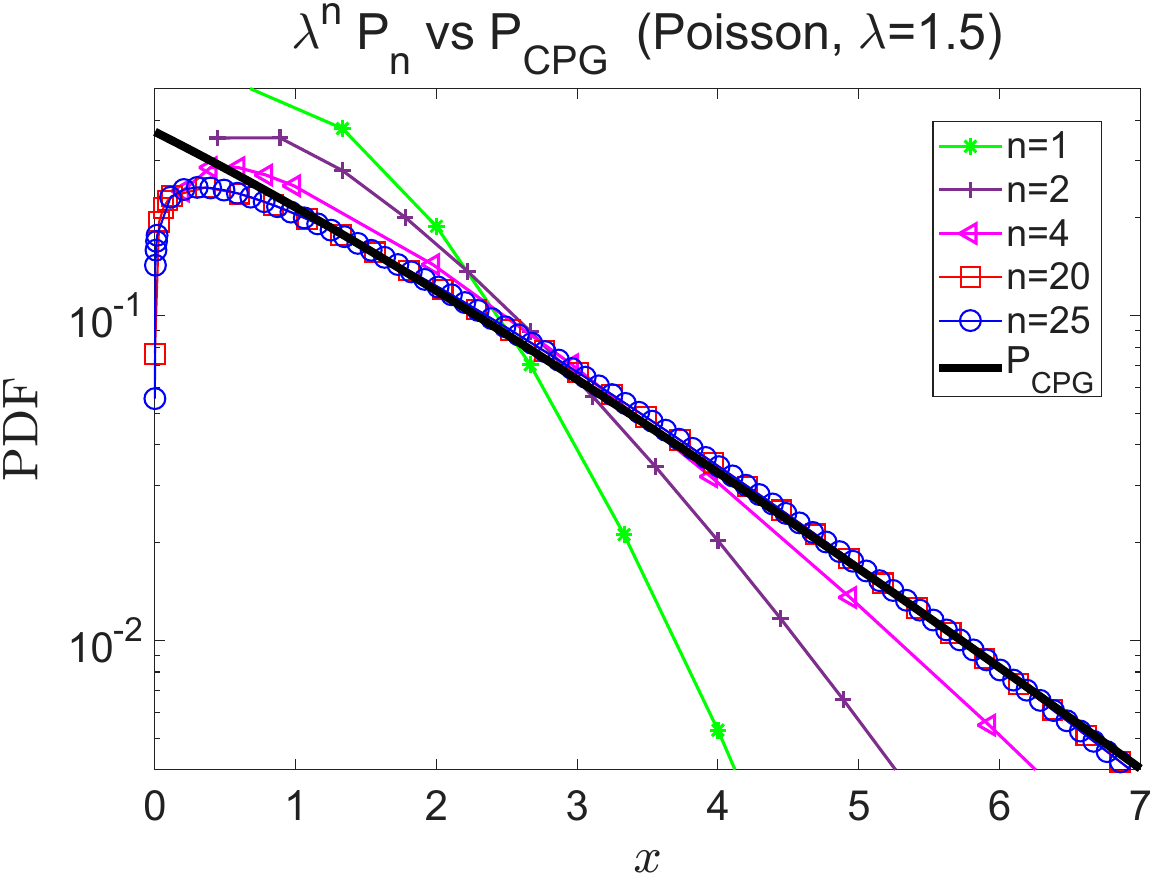}
\includegraphics[width=0.67\columnwidth]{./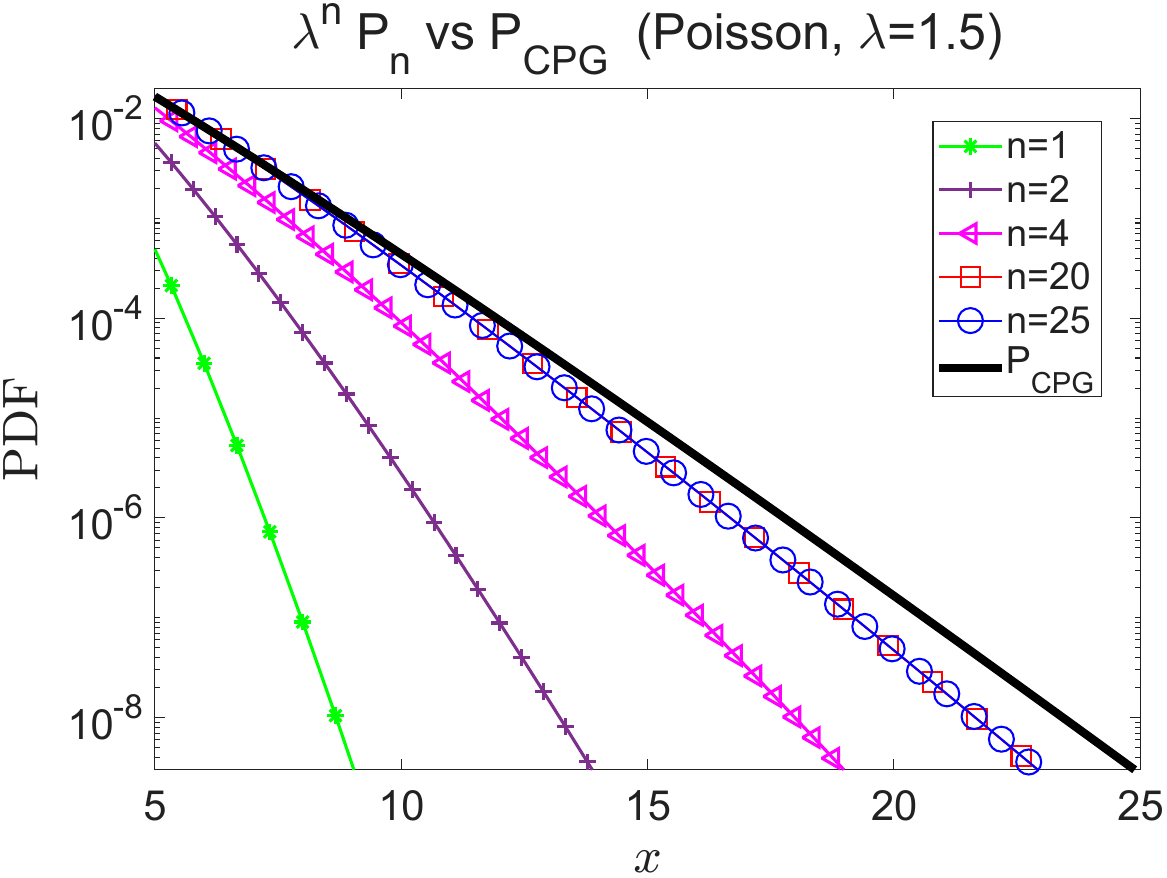}
\includegraphics[width=0.67\columnwidth]{./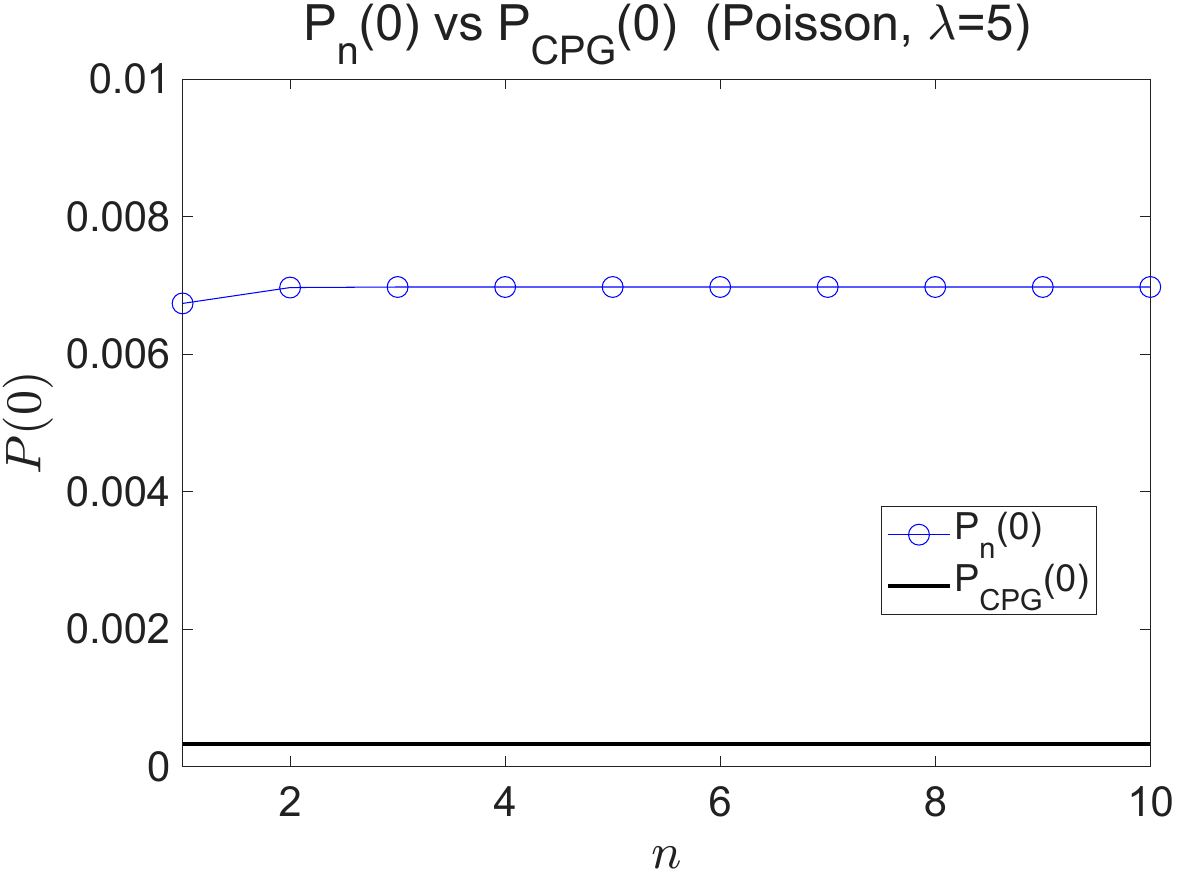}
\includegraphics[width=0.67\columnwidth]{./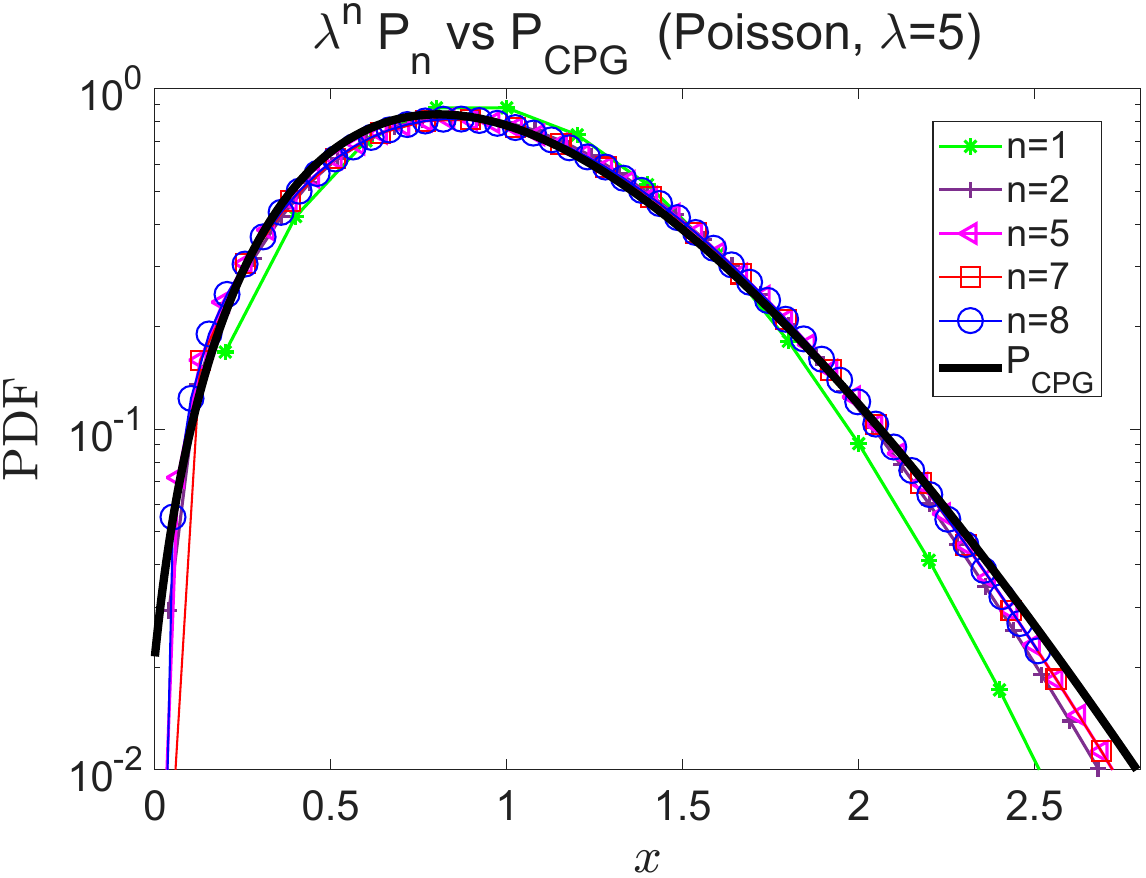}
\includegraphics[width=0.67\columnwidth]{./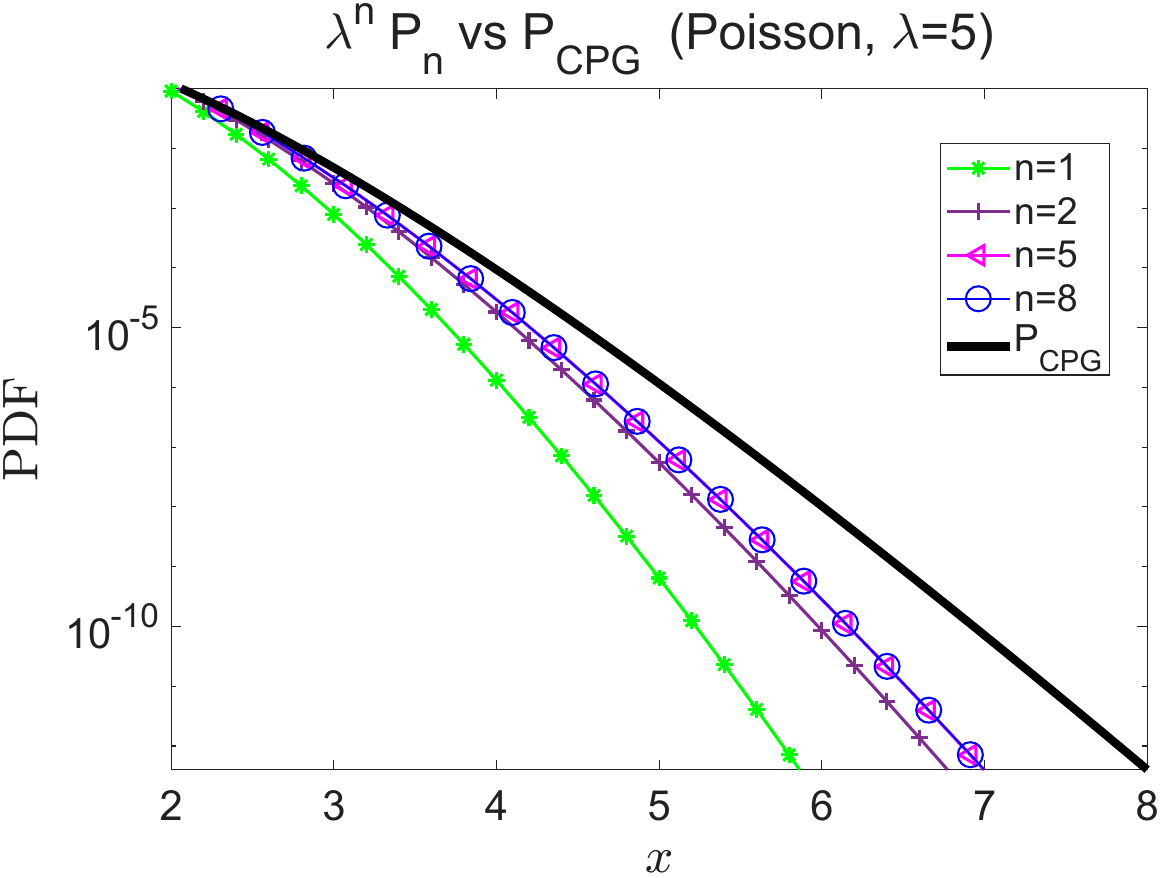}
\caption{Comparison between $P_{\mathrm{CPG}}$ and $P_n$ in the Poisson case under Condition I. Three different values of $\lambda$, $1.1,1.5,5.0$, are examined from top to bottom. The left column is for the plot of $P_n(0)$ (joined blue circles) against $n$, compared with $P_{\mathrm{CPG}}(0)$ (black line). The other columns compare $\lambda^n P_n(\lambda^n x)$ and $P_{\mathrm{CPG}}(x)$ in the semi-logarithmic scale. The bulk region and the right tail of the distribution are exhibited in the center and right columns, respectively. } 
\Lfig{Poisson_condI}
\end{center}
\end{figure*}
The top row shows results with $\lambda=1.1$, that is, for the case where $\lambda$ is close to the critical value 1.
One can observe that, as $n$ grows, $P_n(0)$ closely approaches $P_{\mathrm{CPG}}(0)$ (left panel) and that $P_{\mathrm{CPG}}(x)$ shows good agreement with $\lambda^n P_n(\lambda^n x)$ over a fairly wide range of $x$ for large $n$ (center panel).
These results are consistent with the theoretical argument above. 
Meanwhile, the right tail of $\lambda^n P_n(\lambda^n x)$ decays faster than the exponential decay of $P_{\mathrm{CPG}}(x)$ (right panel). 
This is natural, as will be discussed in Section~\ref{sec:tail}, because the limiting random variable $W$ of the GW process with an offspring distribution having a lighter-than-exponential tail, such as the Poisson distribution, should also have a lighter-than-exponential tail. Although the tail behaviors differ, the bulk region with non-negligible probability shows good agreement, supporting the validity of our approximation approach. 

The middle and bottom rows of \Rfig{Poisson_condI} show results for the Poisson offspring distribution with $\lambda=1.5$ and $5.0$, respectively. In both cases, the bulk regions of $P_{\rm CPG}(x)$ still exhibit fairly good agreement with $\lambda^n P_n(\lambda^n x)$, as shown in the center column. 
However, from the left column of \Rfig{Poisson_condI}, it was found that the CPG model underestimates the probability of zero of the GW model for large $n$.
We theoretically confirmed that this underestimation problem occurs for any $\lambda > 1$ (see Sec. S2 of the Supplemental Material~\footnote{See Supplemental Material at for a proof that the CPG approximation underestimates the probability of zero of the GW model as $n \to \infty$.}).

The results for the geometric offspring distribution with $\lambda \in \{1.1, 1.5, 5.0\}$ are shown in \Rfig{Geometric_condI}.
\begin{figure*}[htbp]
\begin{center}
\includegraphics[width=0.67\columnwidth]{./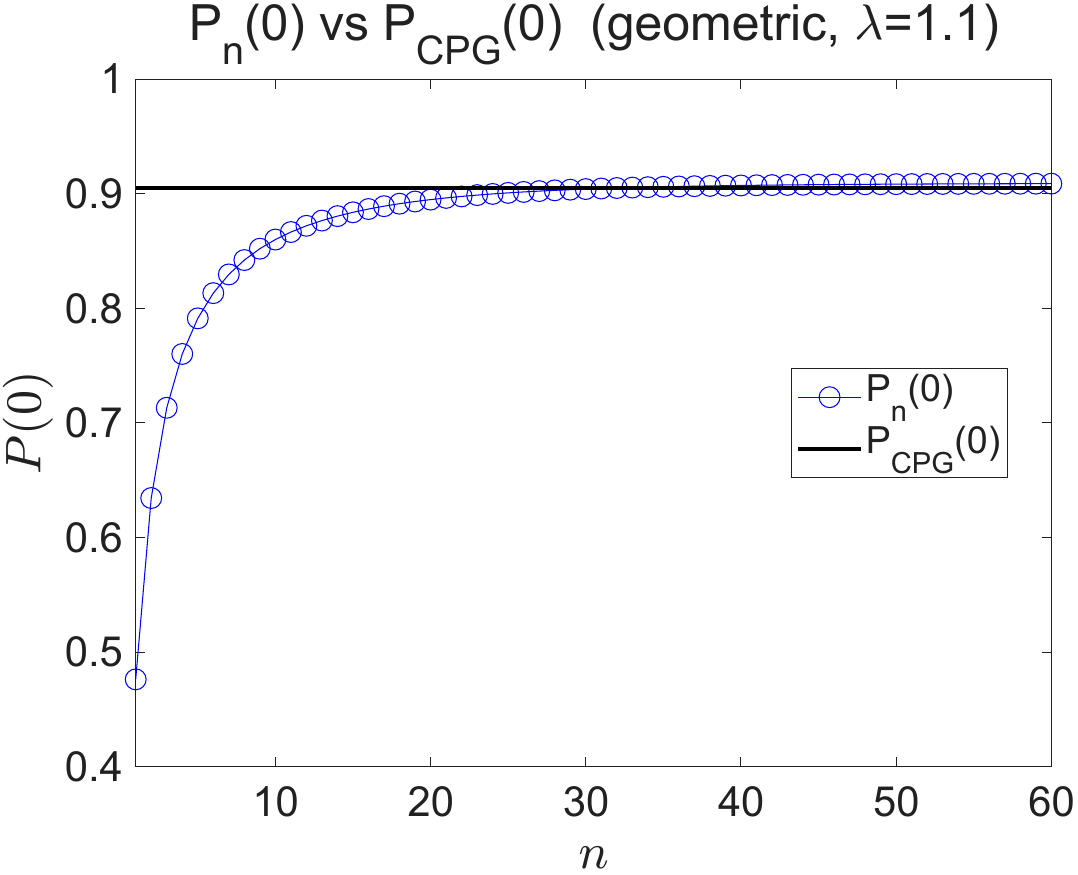}
\includegraphics[width=0.67\columnwidth]{./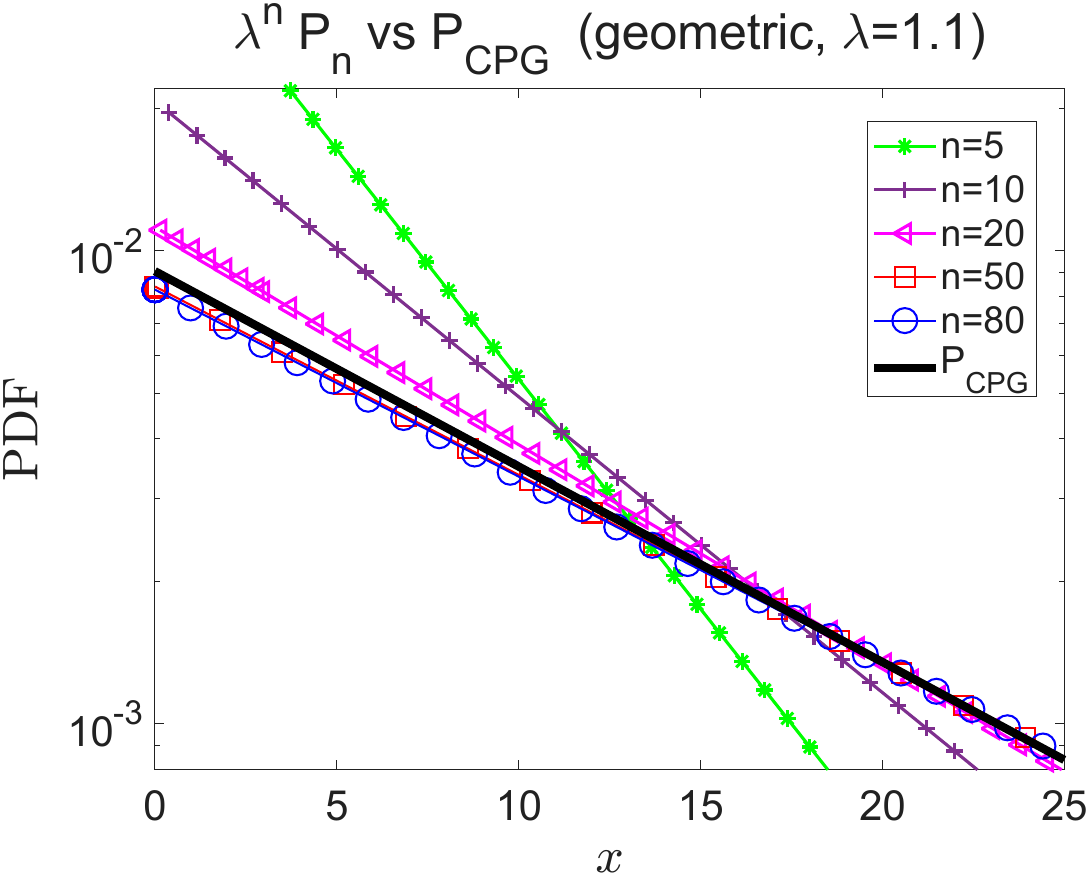}
\includegraphics[width=0.67\columnwidth]{./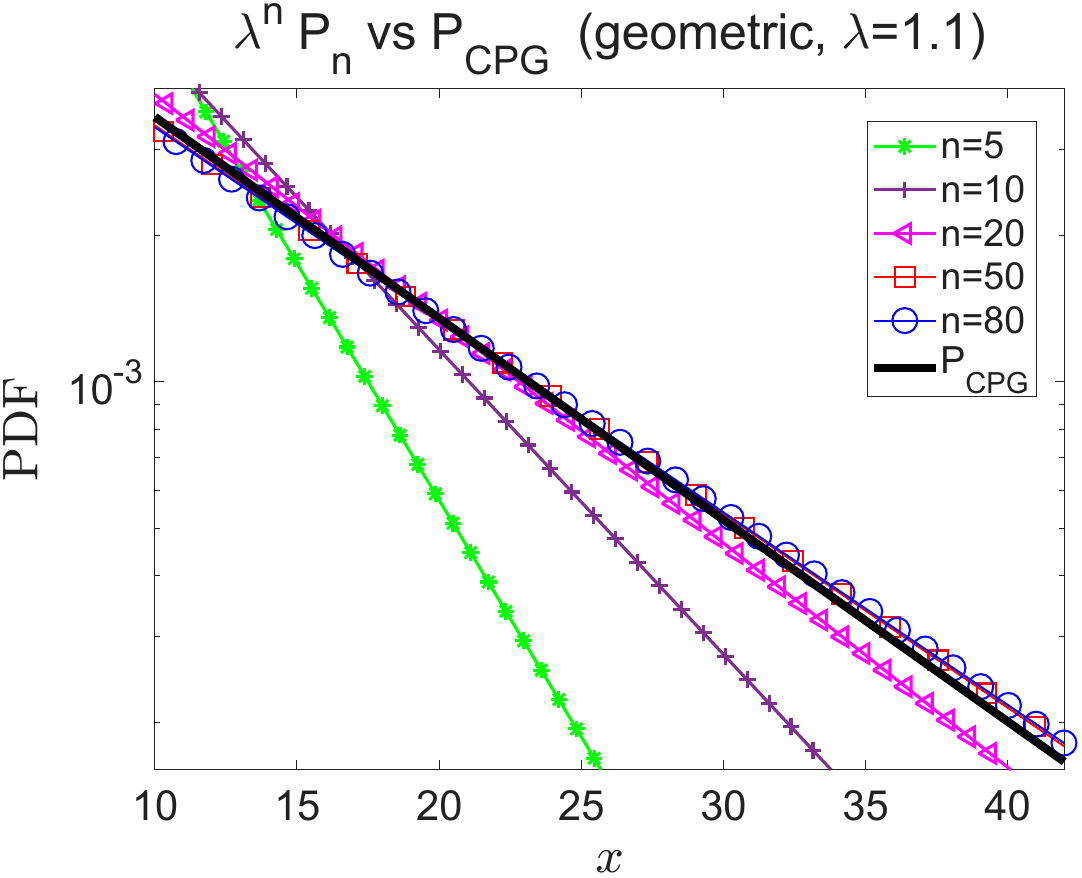}
\includegraphics[width=0.67\columnwidth]{./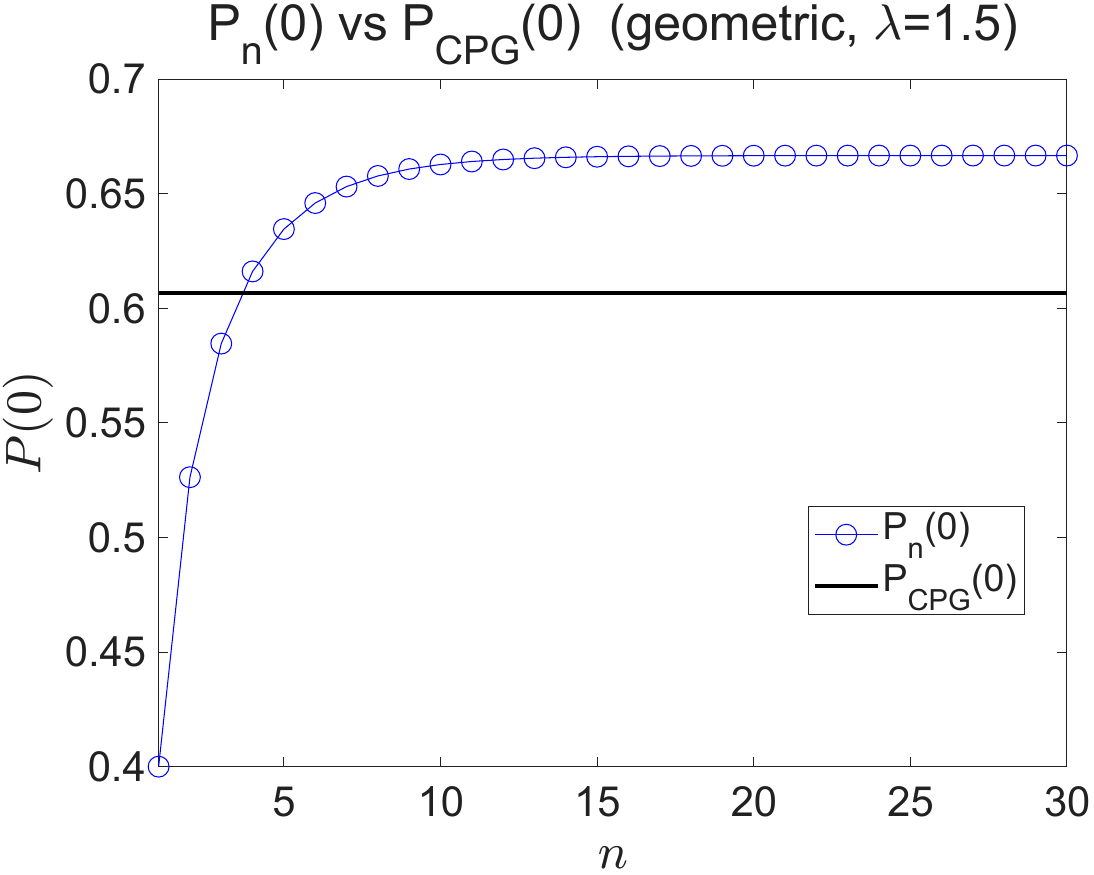}
\includegraphics[width=0.67\columnwidth]{./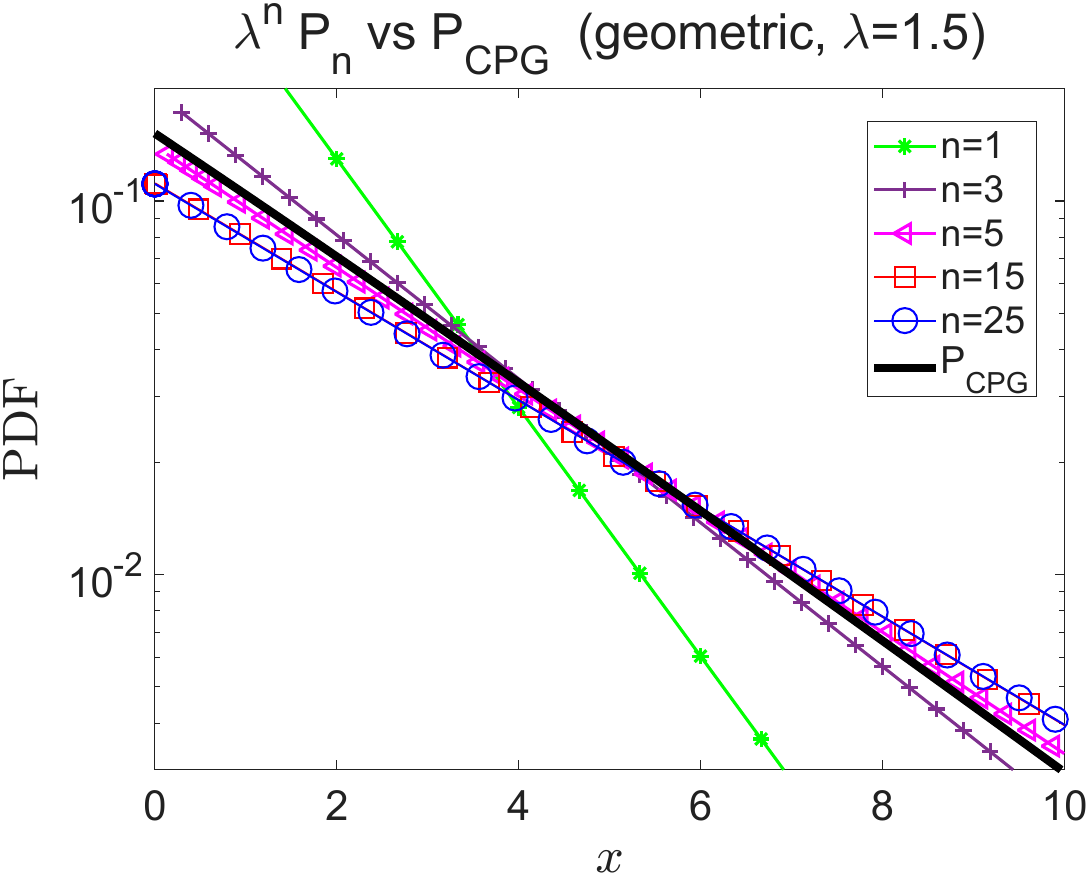}
\includegraphics[width=0.67\columnwidth]{./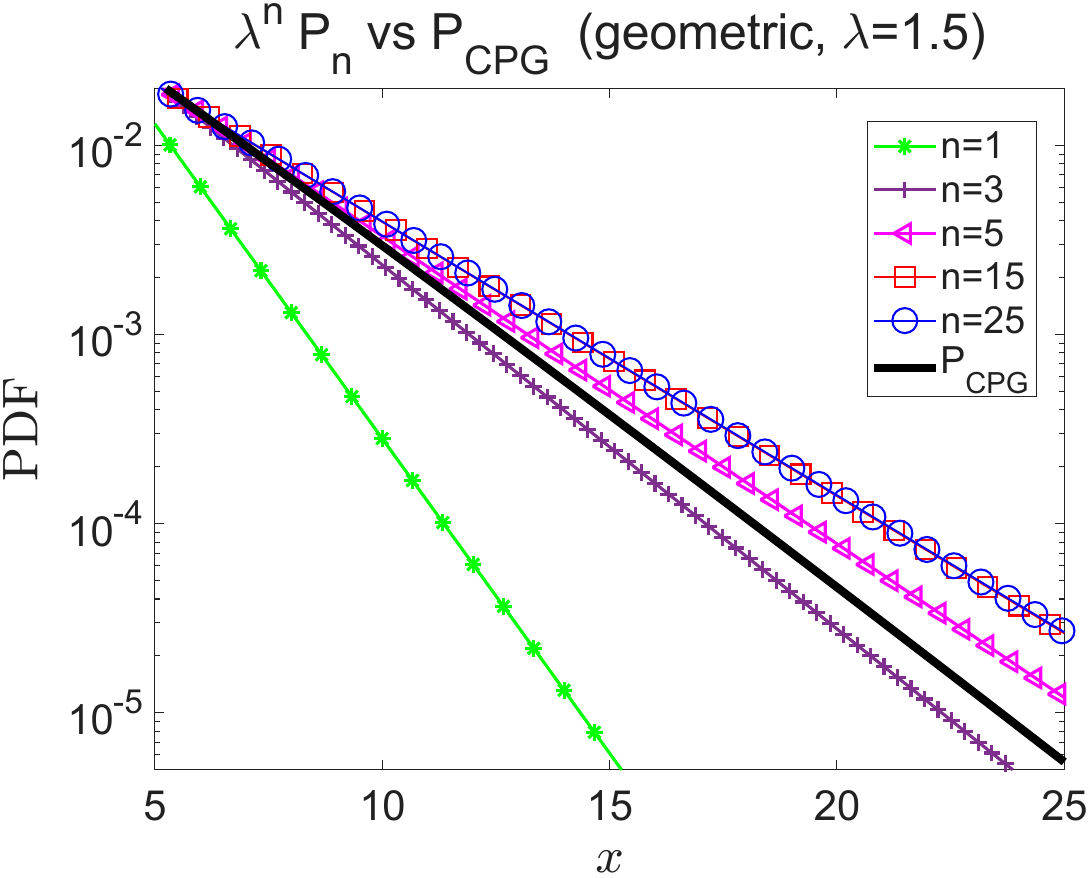}
\includegraphics[width=0.67\columnwidth]{./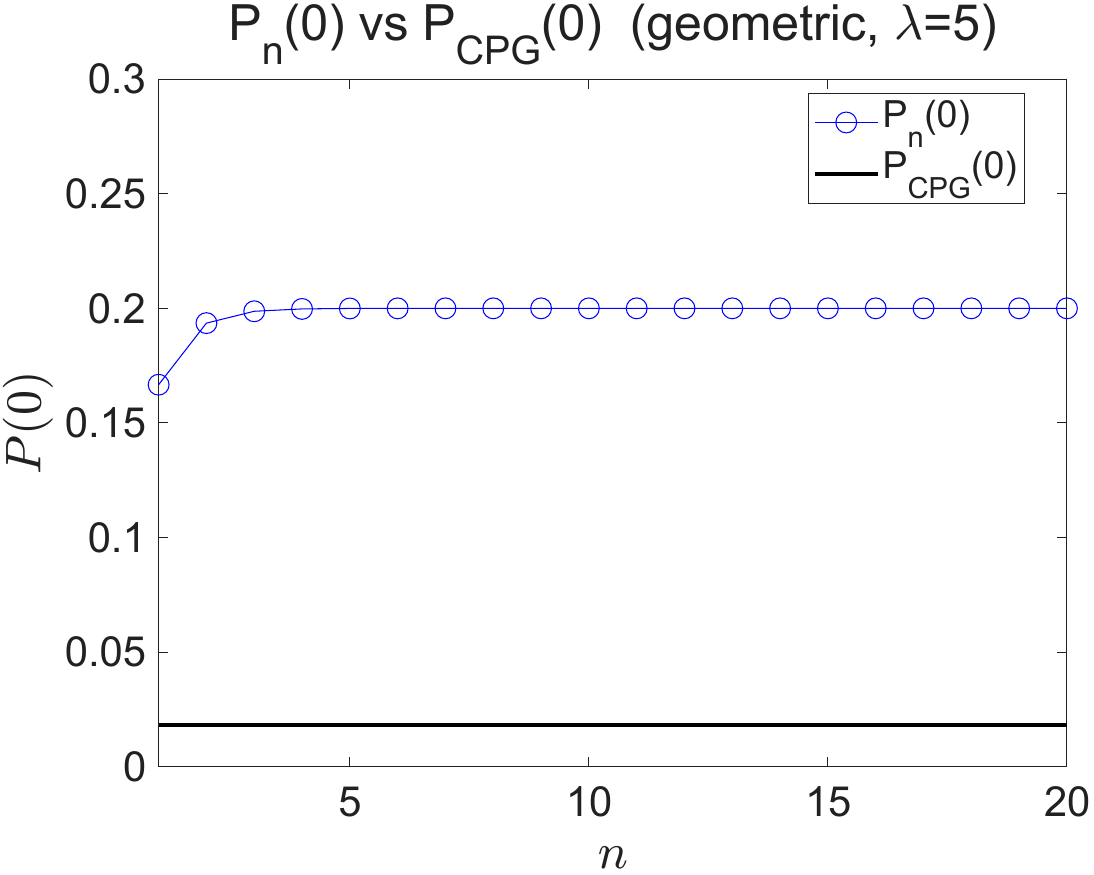}
\includegraphics[width=0.67\columnwidth]{./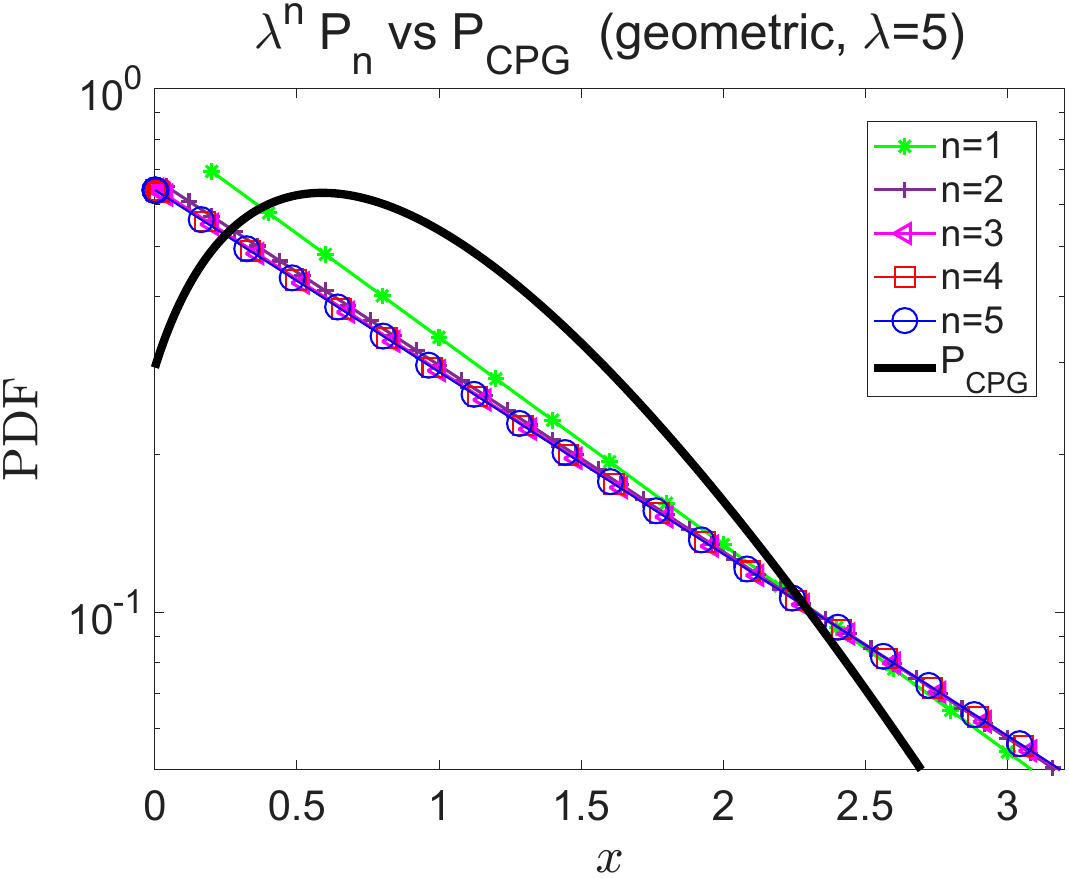}
\includegraphics[width=0.67\columnwidth]{./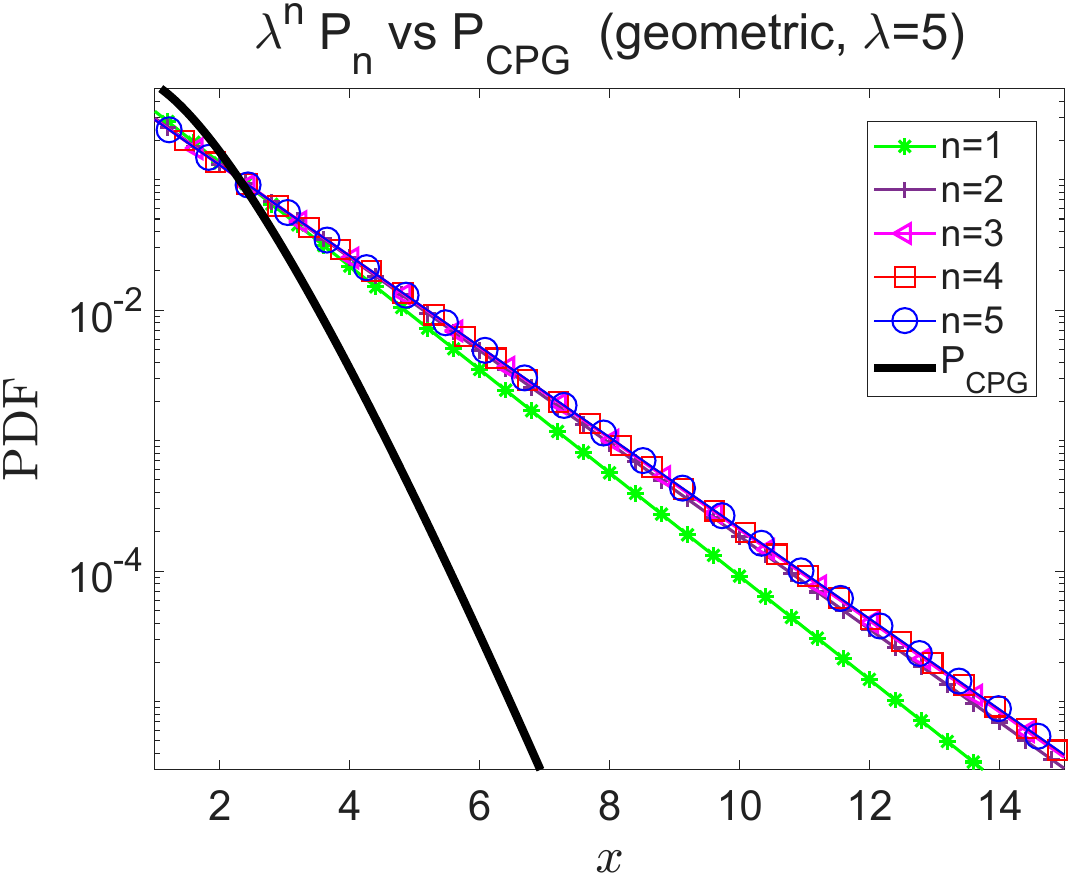}
\caption{Comparison between $P_{\mathrm{CPG}}$ and $P_n$ in the geometric case under Condition I. Three different values of $\lambda$, $1.1,1.5,5.0$, are examined from top to bottom. The left column is for the plot of $P_n(0)$ (joined blue circles) against $n$, compared with $P_{\mathrm{CPG}}(0)$ (black line). The other columns compare $\lambda^n P_n(\lambda^n x)$ (joined color markers) and $P_{\mathrm{CPG}}(x)$ (black line) in the semi-logarithmic scale. The bulk region and the right tail of the distribution are exhibited in the center and right columns, respectively. } 
\Lfig{Geometric_condI}
\end{center}
\end{figure*}
It is exactly the counterpart of \Rfig{Poisson_condI} and the corresponding panel plots the same quantity at the same parameter as that in \Rfig{Poisson_condI}: only the difference is in its offspring distribution. The top row again shows good agreement between $P_{\mathrm{CPG}}(x)$ and $\lambda^nP_n(\lambda^nx)$ at large $n$, demonstrating the universality of our theory when $\lambda$ is close to 1. On the other hand, 
the tail part of the distributions exhibits an exponential decay, in contrast to the Poisson case. This is because the geometric distribution has an exponential tail: the limiting random variable $W$ of the GW process with an offspring distribution having an exponential tail has an exponential tail as well (see Section~\ref{sec:tail} for a detailed discussion). Another intriguing observation, as shown in the bottom panels, is that for large $\lambda$, $P_{\mathrm{CPG}}(x)$ no longer approximates $\lambda^n P_n(\lambda^n x)$ at all.
This implies that, although the CPG distribution remains a relatively good approximation for the Poisson offspring distribution even for large $\lambda$, the same does not hold for the geometric offspring distribution. 

\subsection{Results Under Condition II}
We conducted numerical experiments under Condition II, assuming that $Z_0$ and $X_n^{(i)}$ in \NReq{GW process} follow Poisson distributions with means $\lambda_0$ and $\lambda$, respectively.
Although this setting is the simplest, it remains nontrivial; it has been adopted as a reasonable model for SER distributions in earlier studies~\cite{Dietz1965, Gale1966}.
Note that analogous calculations for the geometric offspring distribution can be performed by appropriately modifying the procedure described in Appendix~\ref{sec:Recursion Specific to the Geometric Distribution}.

We let $Q_n(\ell)$ denote the probability $\mathbb{P}(Z_n=\ell)$ under Condition II. 
We first examined cases where the mean $\lambda$ of the offspring distribution is close to unity.
Figure~\ref{fig:condII_close1} shows results for $\lambda_0=1,2,5$ at $\lambda=1.1$. 
\begin{figure*}[htbp]
\begin{center}
\includegraphics[width=0.67\columnwidth]{./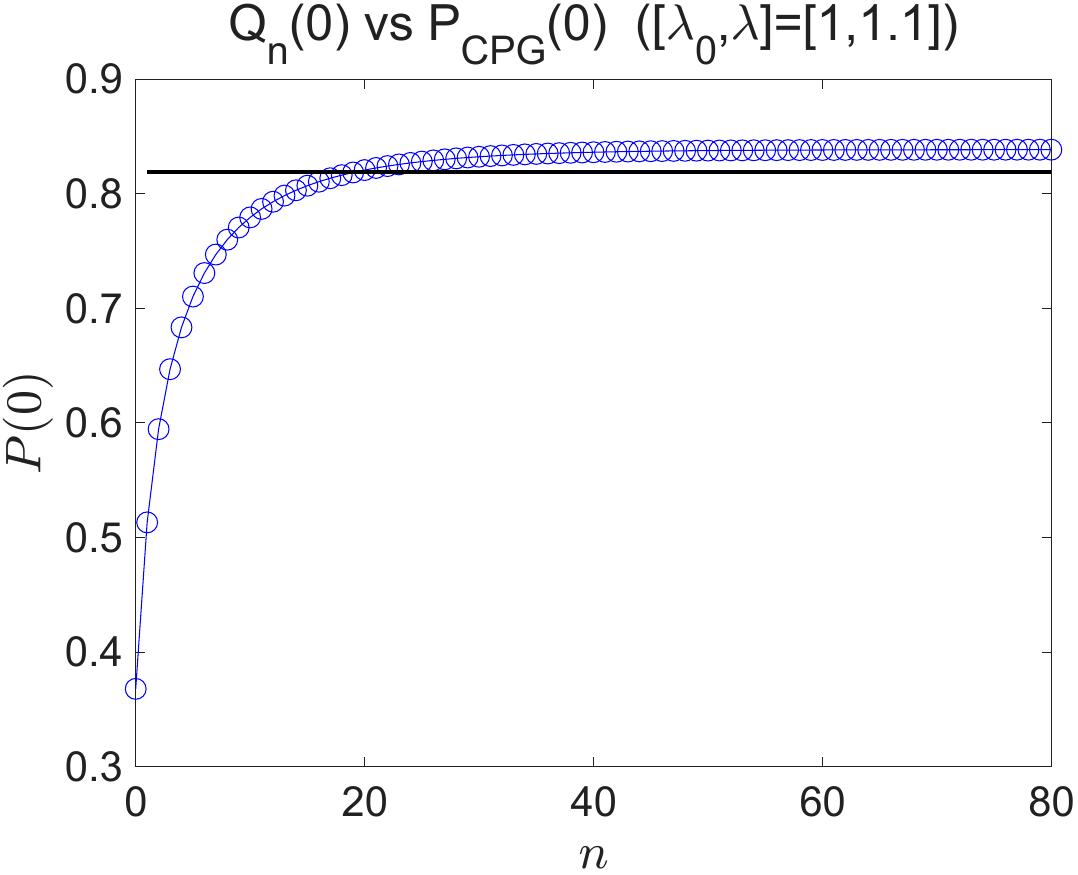}
\includegraphics[width=0.67\columnwidth]{./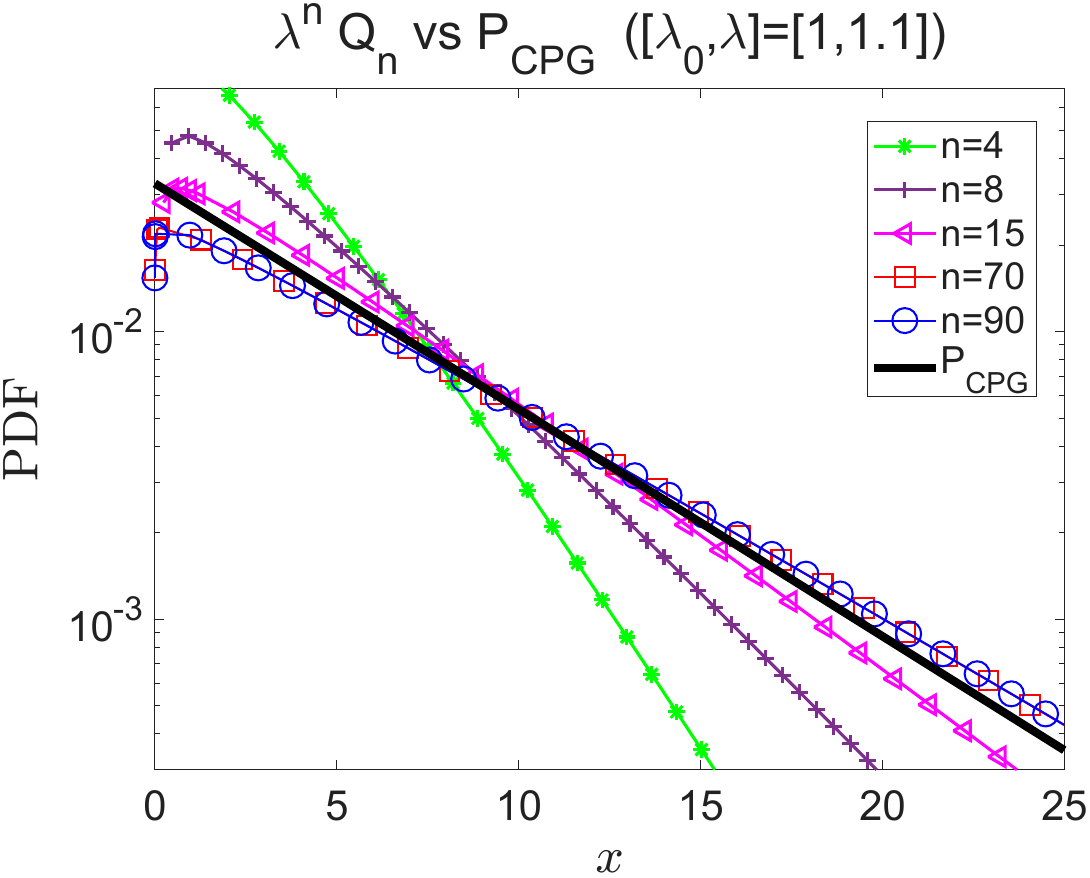}
\includegraphics[width=0.67\columnwidth]{./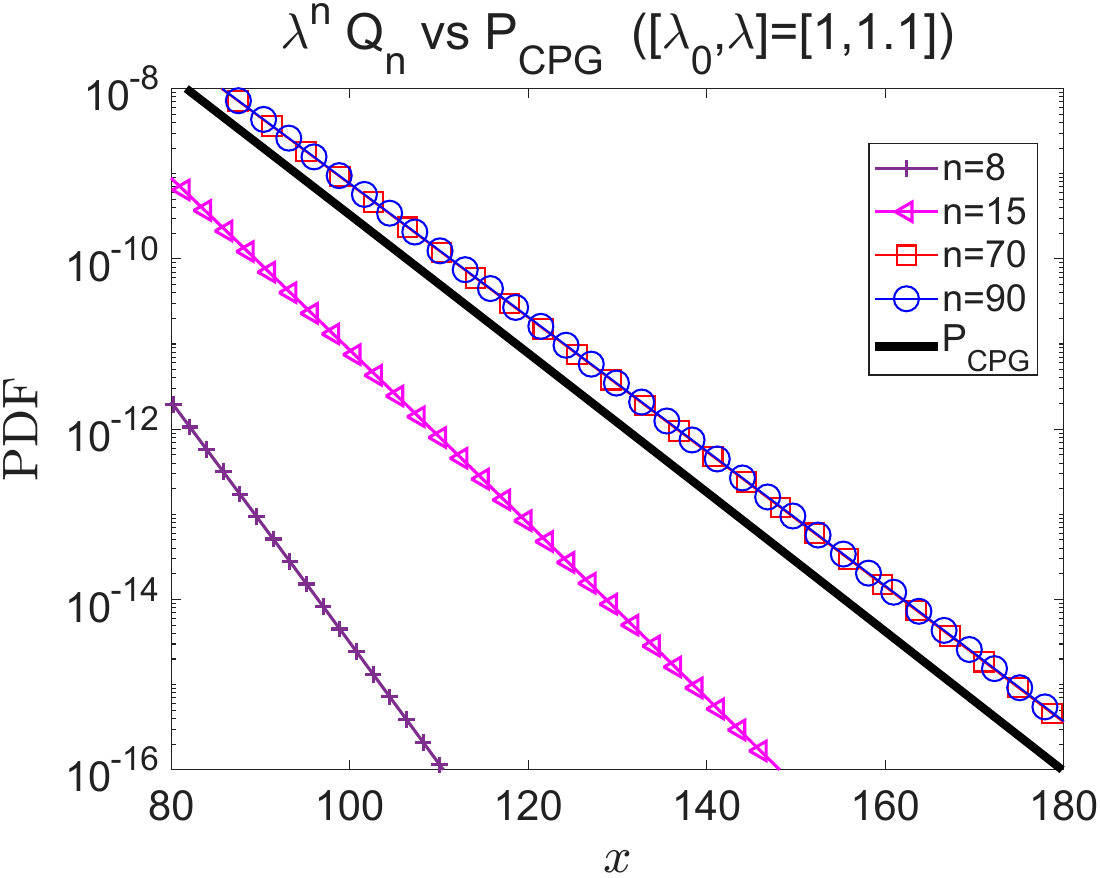}
\includegraphics[width=0.67\columnwidth]{./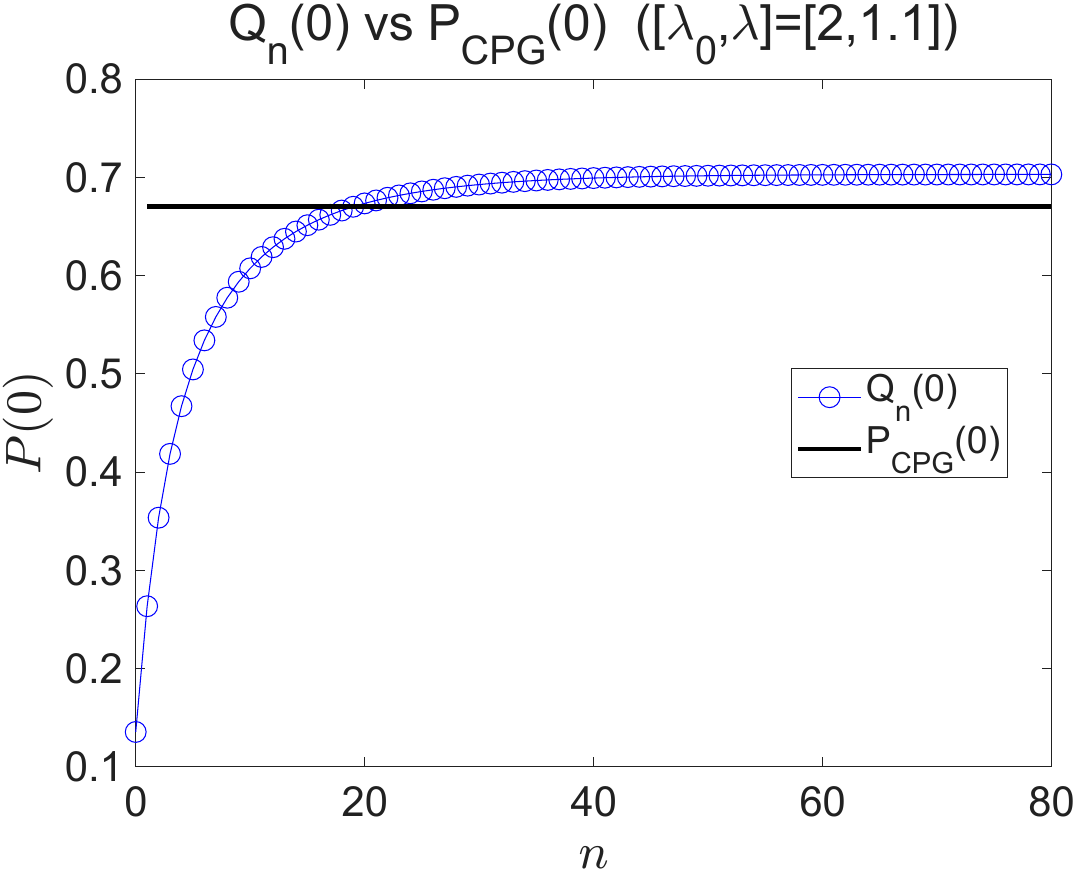}
\includegraphics[width=0.67\columnwidth]{./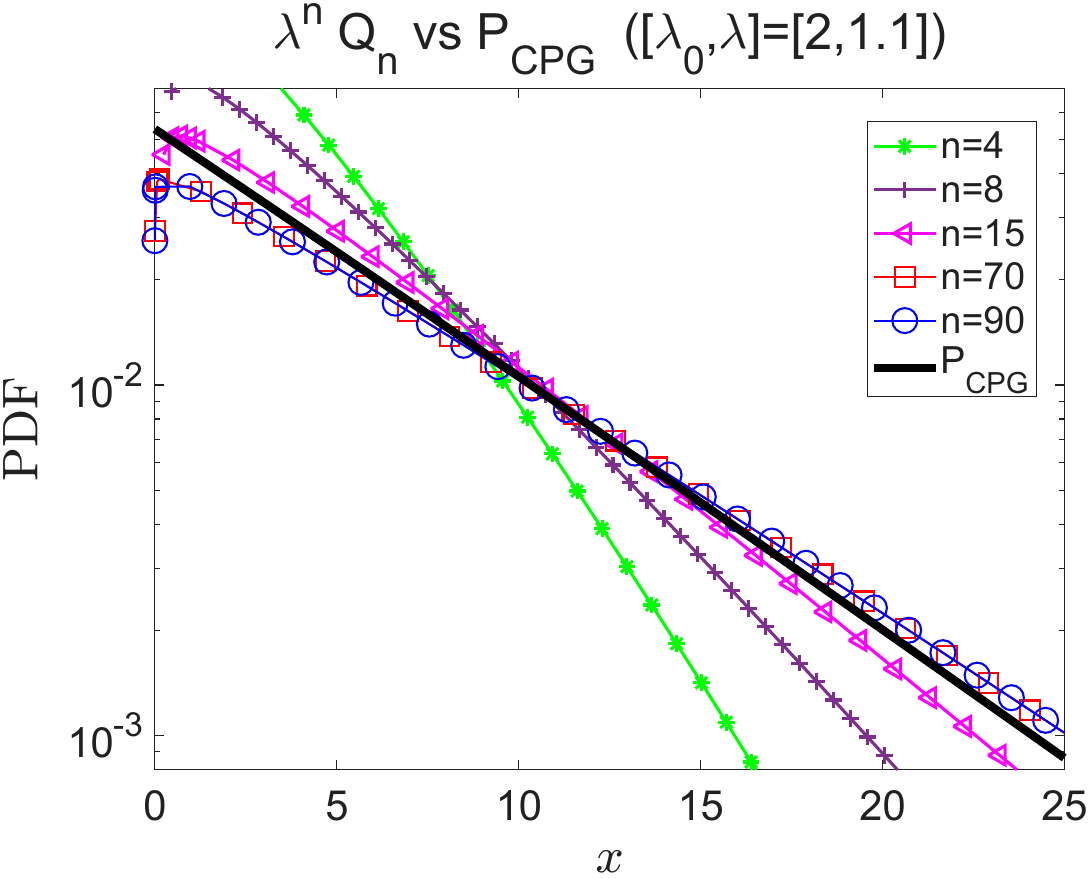}
\includegraphics[width=0.67\columnwidth]{./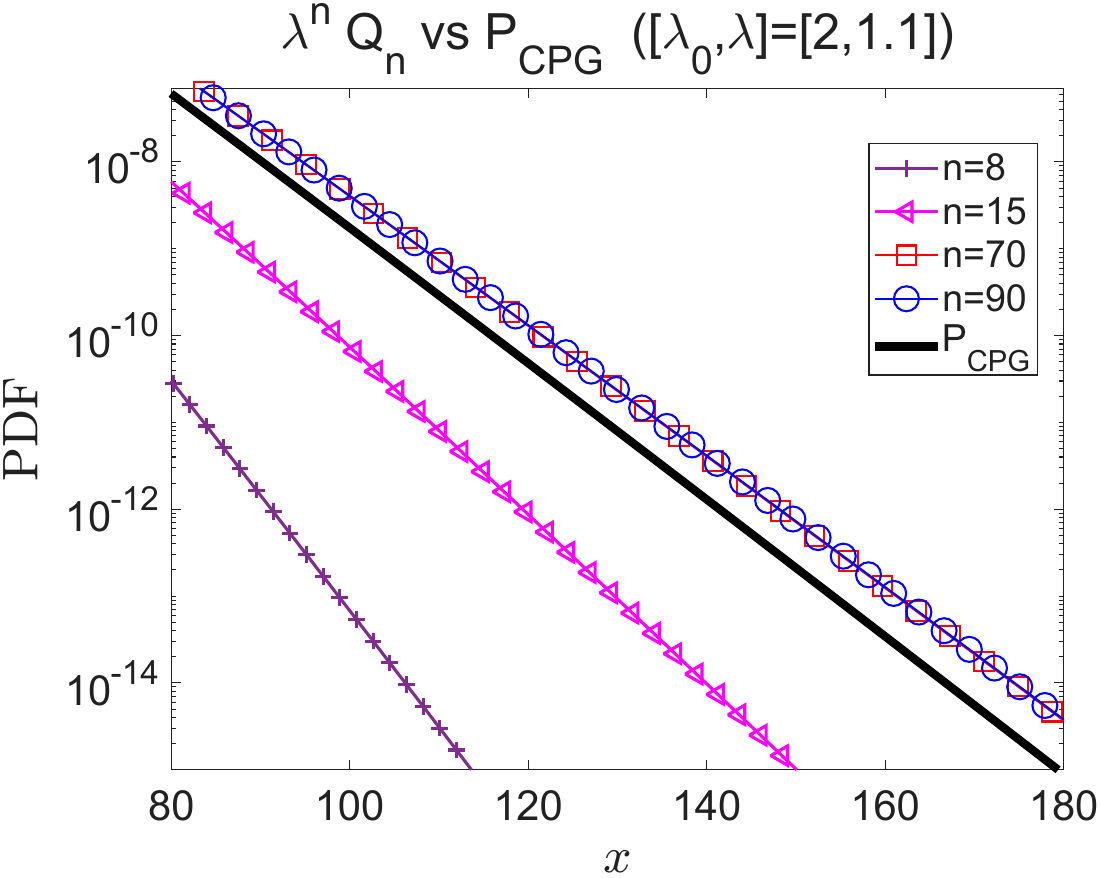}
\includegraphics[width=0.67\columnwidth]{./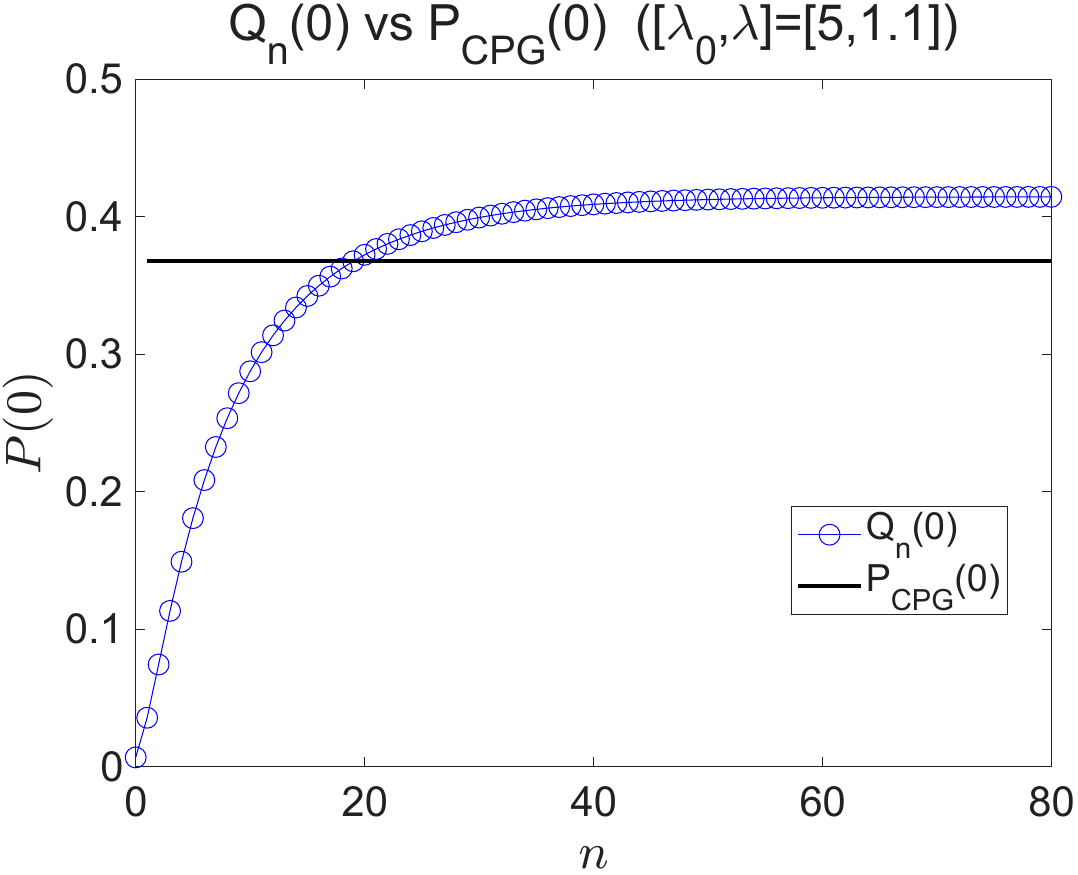}
\includegraphics[width=0.67\columnwidth]{./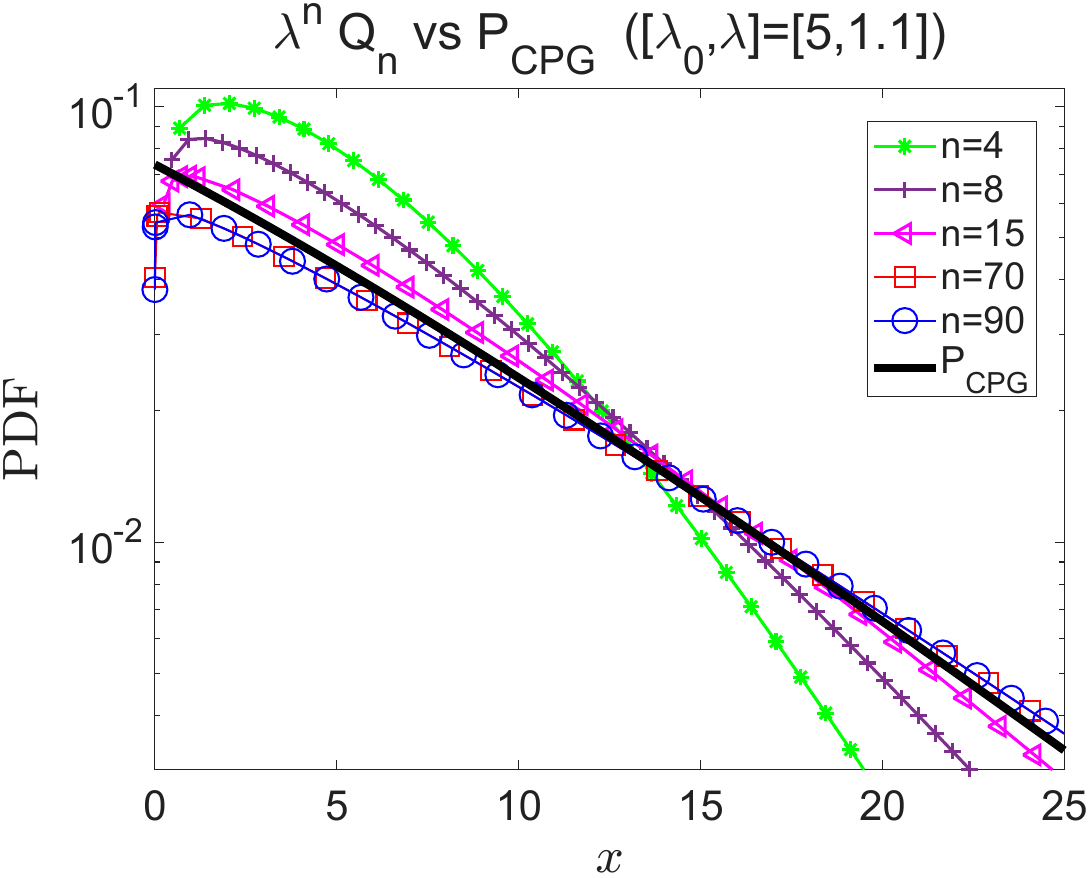}
\includegraphics[width=0.67\columnwidth]{./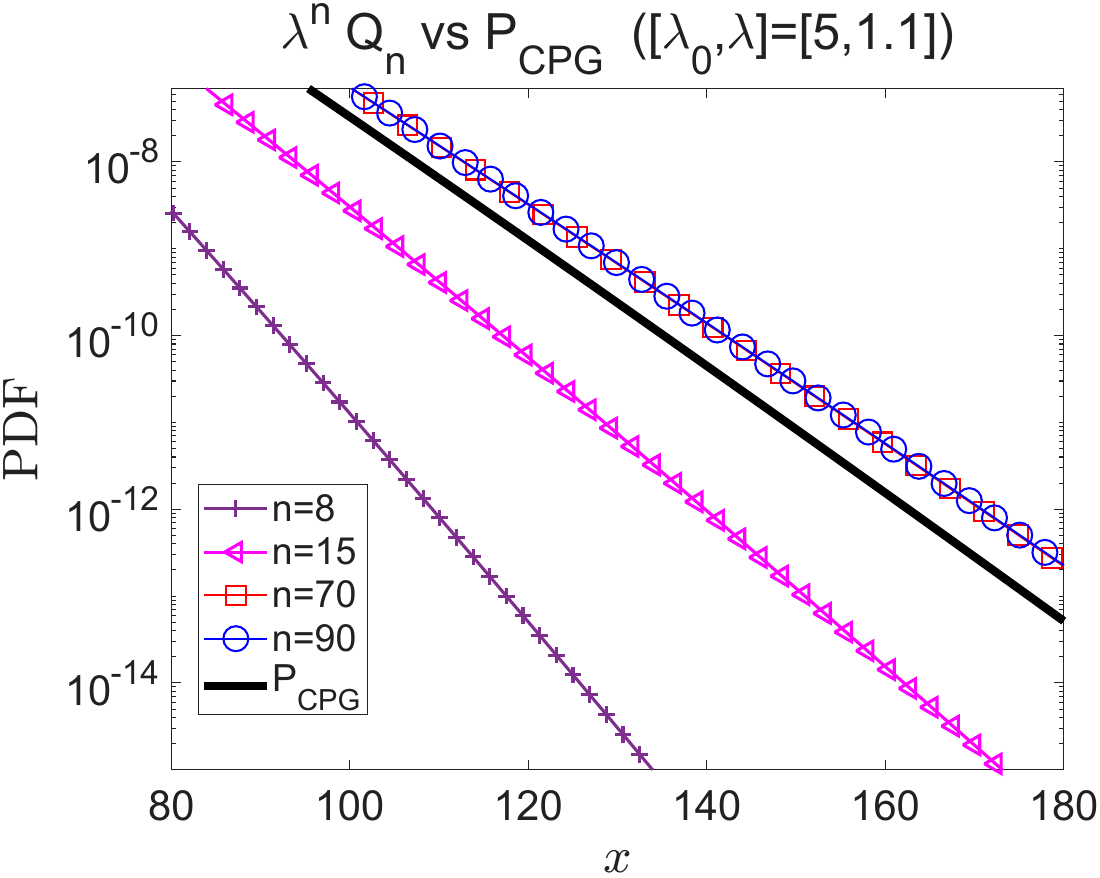}
\caption{Comparison between $P_{\mathrm{CPG}}$ and $Q_n$ in the Poisson case under Condition II at $\lambda=1.1$. Three different values of $\lambda_0$, $1,2,5$, are examined from top to bottom. The left columns are for the plot of $Q_n(0)$ (joined blue circles) against $n$, compared with $P_{\mathrm{CPG}}(0)$ (black line). The other columns compare $\lambda^n Q_n(\lambda^n x)$ and $P_{\mathrm{CPG}}(x)$ in the semi-logarithmic scale. The bulk region and the right tail of the distribution are exhibited in the center and right columns, respectively. } 
\Lfig{condII_close1}
\end{center}
\end{figure*}
As in \Rfigs{Poisson_condI}{Geometric_condI}, both the probability of zero (left panels) and the bulk region of $P_{\rm CPG}(x)$ (center panels) are consistent with $\lambda^n Q_n(\lambda^n x)$ for all values of $\lambda_0$ examined, again supporting our theoretical analysis. 

To further investigate the dependence on $\lambda$, we also examined cases with larger values of $\lambda$. Figure~\ref{fig:condII_far1} shows the plot of $Q_n(0)$ against $n$ and of $\lambda^n Q_n(\lambda^n x)$ in the top and bottom rows, respectively: the value of $\lambda_0$ was fixed to unity and the results with $\lambda=1.5,2,5$ are shown from left to right. 
\begin{figure*}[htbp]
\begin{center}
\includegraphics[width=0.67\columnwidth]{./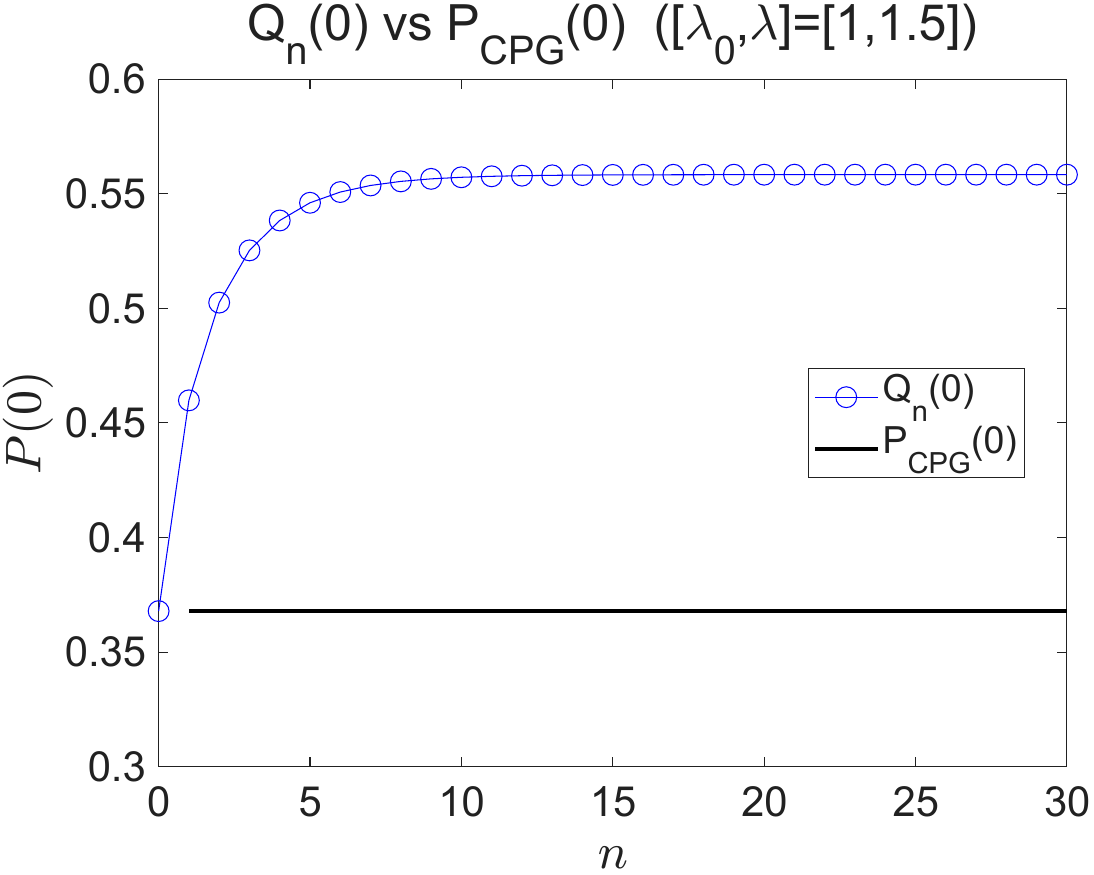}
\includegraphics[width=0.67\columnwidth]{./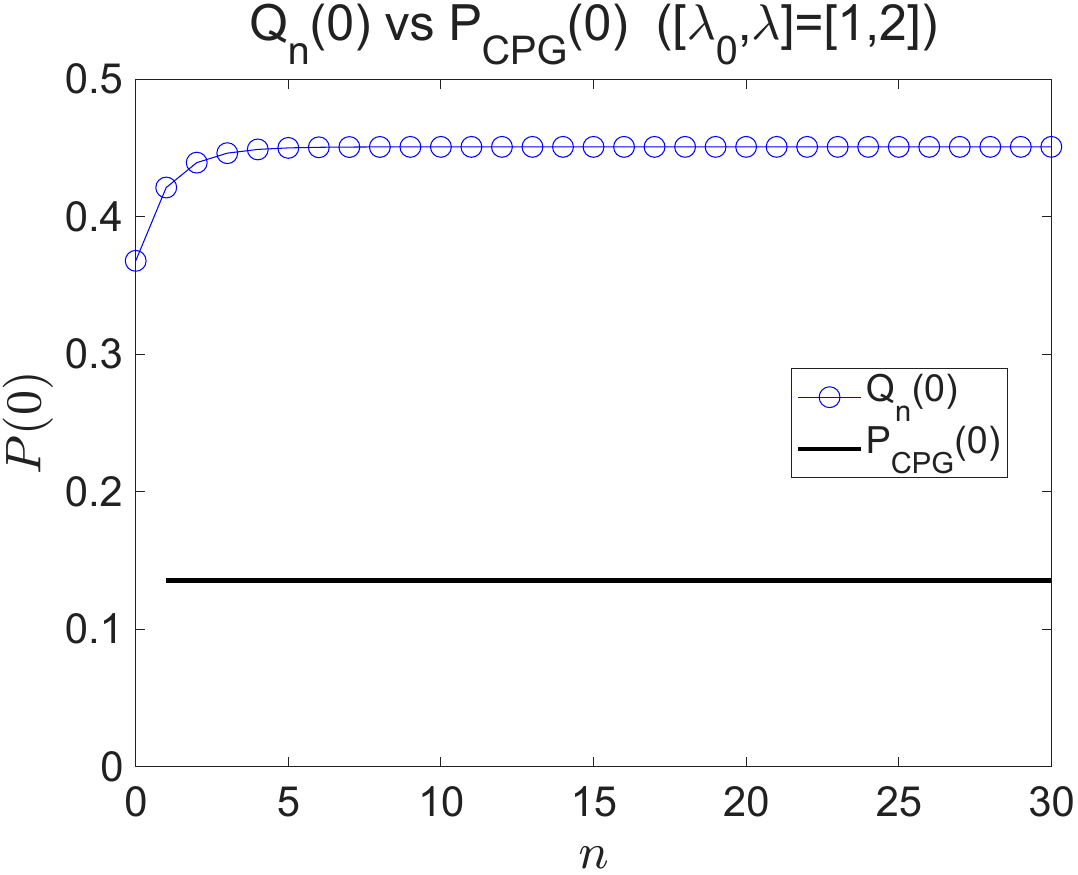}
\includegraphics[width=0.67\columnwidth]{./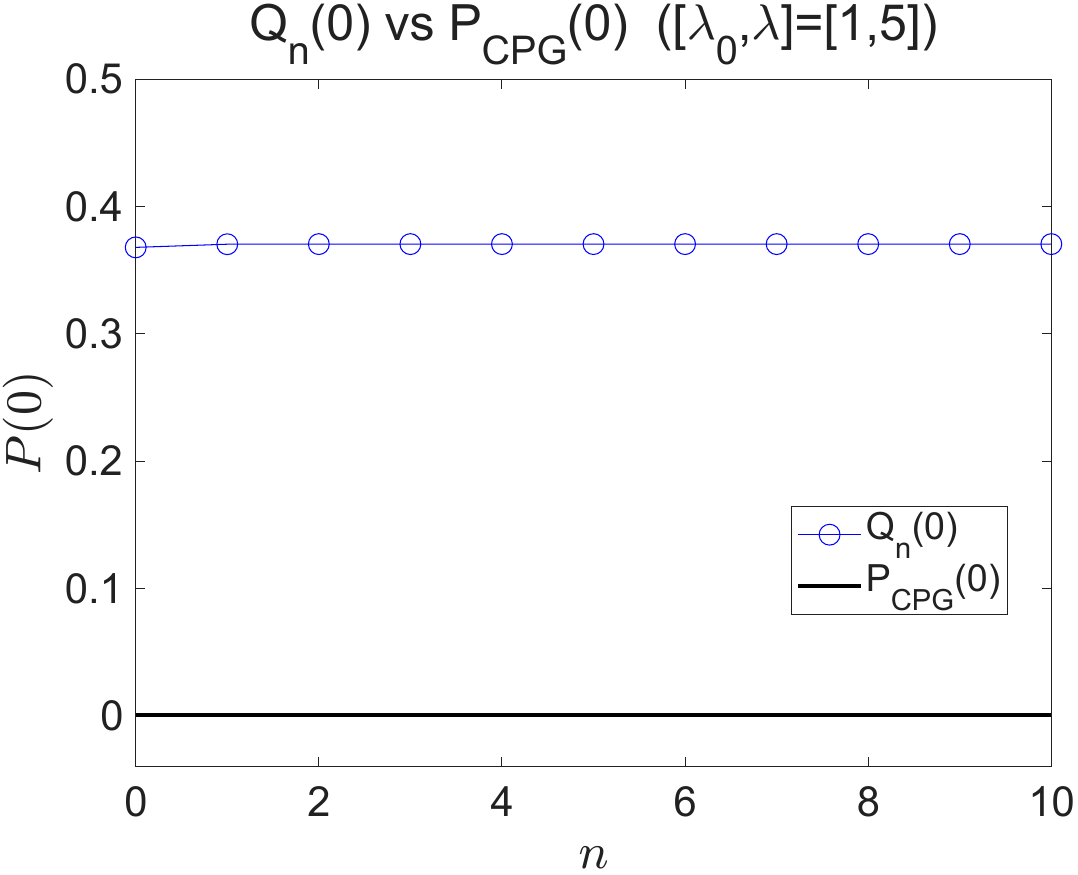}
\includegraphics[width=0.67\columnwidth]{./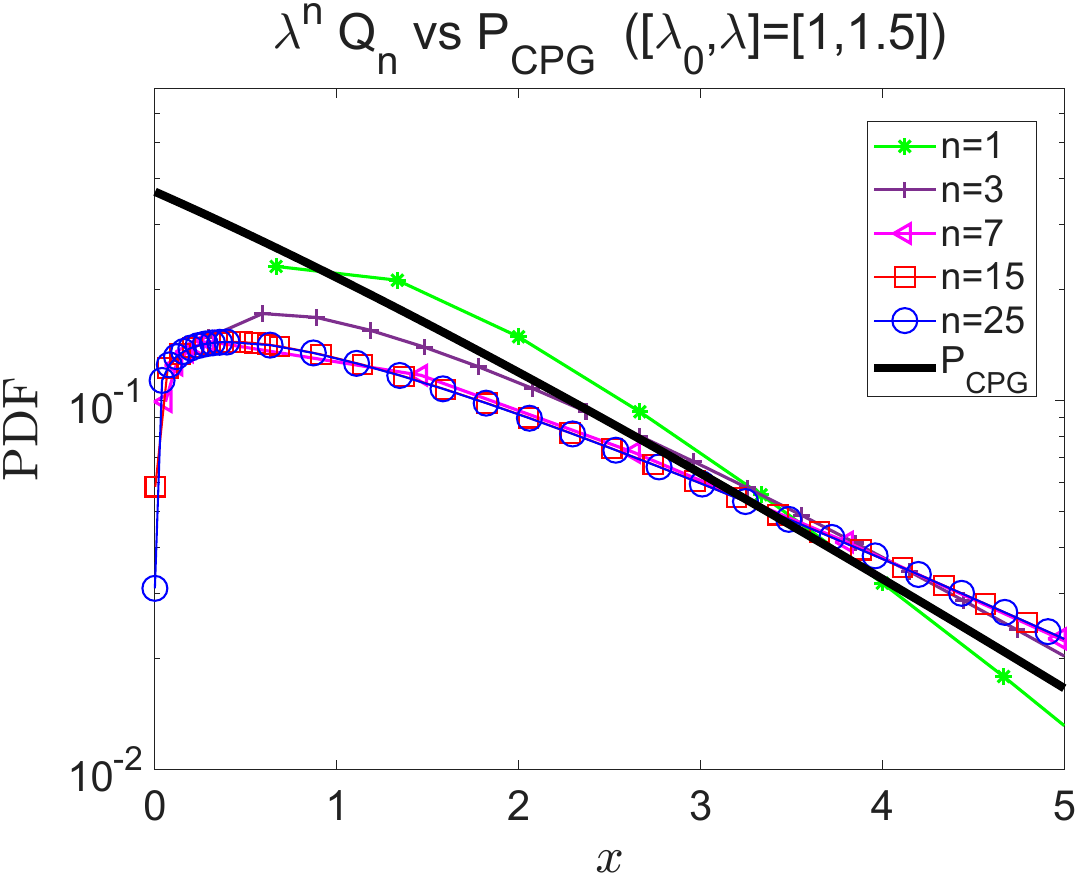}
\includegraphics[width=0.67\columnwidth]{./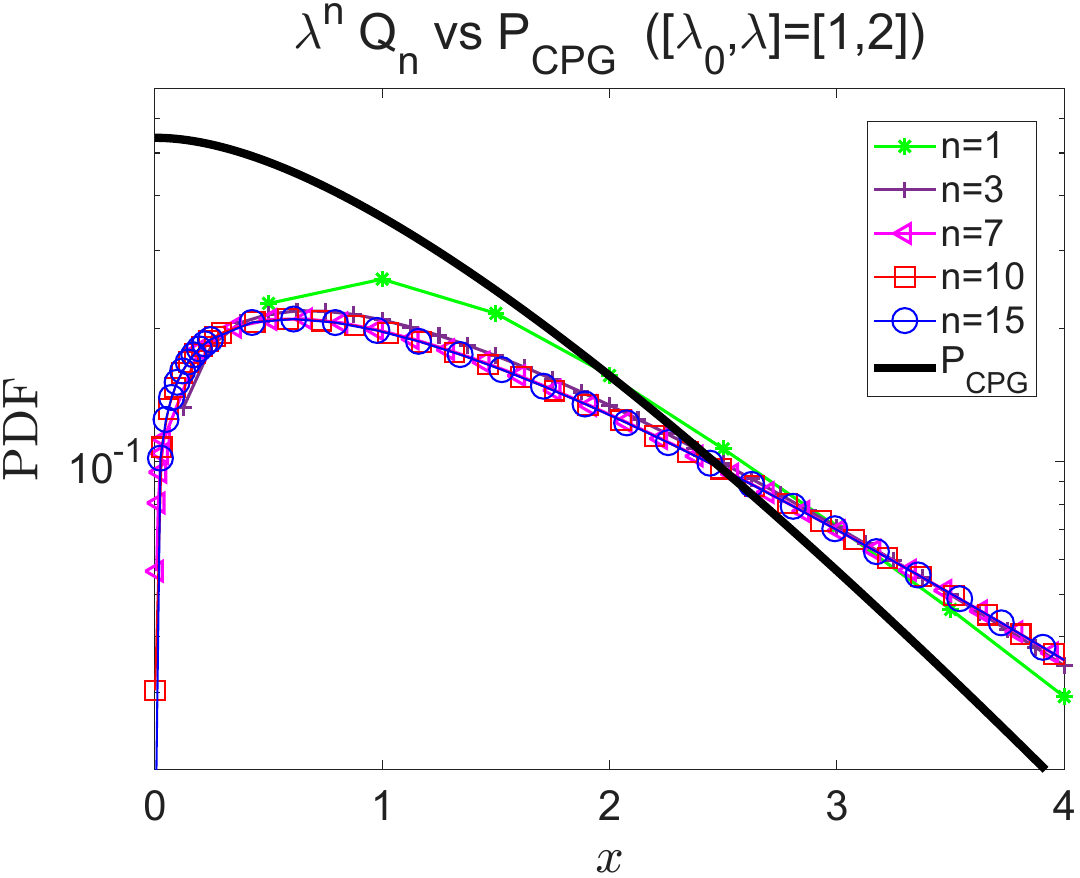}
\includegraphics[width=0.67\columnwidth]{./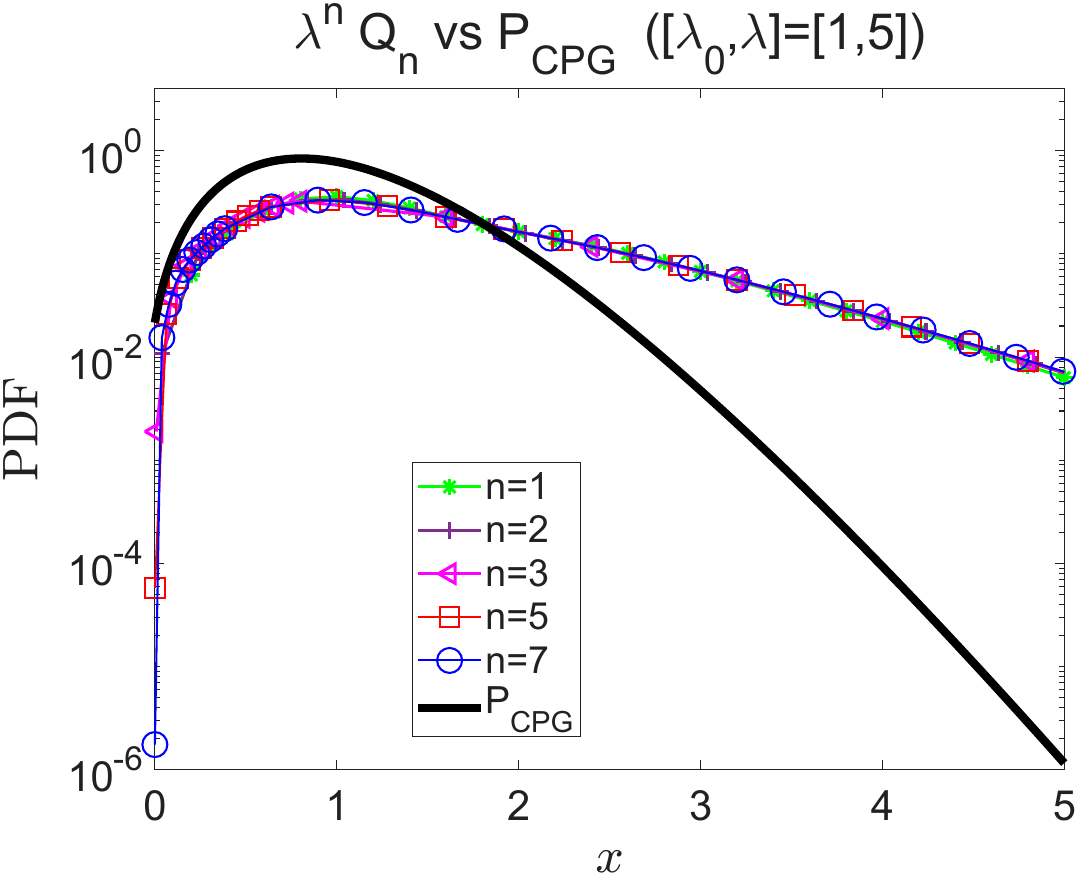}
\caption{Comparison between $P_{\mathrm{CPG}}$ and $Q_n$ in the Poisson case at $\lambda_0=1$. Three different values of $\lambda$, $1.5,2,5$, are examined from left to right. The top row gives the plot of $Q_n(0)$ (joined blue circles) against $n$, compared with $P_{\mathrm{CPG}}(0)$ (black line). The bottom row compare $\lambda^n Q_n(\ell=\lambda^n x)$ (joined color markers) and $P_{\mathrm{CPG}}(x)$ (black line) in the semi-logarithmic scale. } 
\Lfig{condII_far1}
\end{center}
\end{figure*}
As $\lambda$ deviates from $1$, the discrepancy between $Q_n$ and $P_{\mathrm{CPG}}$ increases. An interesting observation in contrast to \Rfig{Poisson_condI} is that the CPG distribution with the theoretical parameter values \Req{CPG param II} is not a good approximation for large $\lambda$, even though the offspring distribution is Poisson. This observation and the one in \Rfig{Geometric_condI} imply that the CPG distribution becomes a good approximation for large $\lambda$ only in rather limited cases.

\subsection{Fitting CPG in the Poisson case} \Lsec{Fitting CPG}
As observed in \Rfigs{Poisson_condI}{condII_far1}, the limiting distribution of the Poisson GW process seems to have a shape similar to the CPG distribution, even when $\lambda$ is not close to 1. This observation suggests that, by appropriately tuning its parameters, the CPG distribution may still work as a good approximation for the limiting distribution even when $\lambda$ significantly deviates from unity. Here, we investigate this possibility.

To this end, we fit the CPG distribution to the numerically computed $Q_n$ at large $n$ and investigate how well the CPG distribution can fit $Q_n$. The fitting procedure we adopted consists of the following two steps:
\begin{enumerate}
\item{Estimate the parameter $\mu$ on the basis of the probability $Q_n(0)$ only, that is, $\hat{\mu} = -\log Q_n(0)$. }
\item{Perform least-squares fitting of $P_{\rm CPG}(\ell / \lambda^n; \hat{\mu}, \alpha, \tau)$ to $\lambda^n Q_n(\ell)$ with respect to $(\alpha,\tau)$ over an appropriate range of $\ell(>0)$, obtaining  $(\hat{\alpha},\hat{\tau})$. }
\end{enumerate}

The fitting results are shown in \Rfig{CPGfit}. 
\begin{figure*}[htbp]
\begin{center}
\includegraphics[width=0.67\columnwidth]{./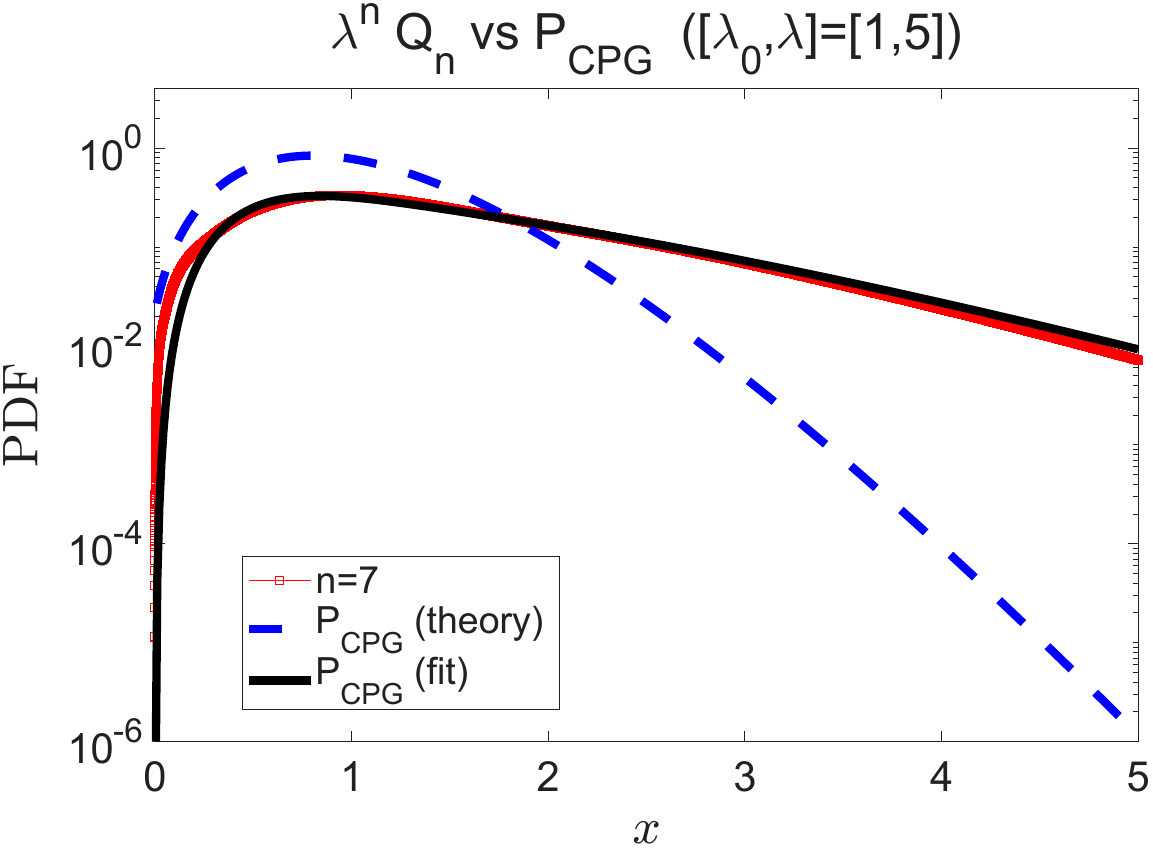}
\includegraphics[width=0.67\columnwidth]{./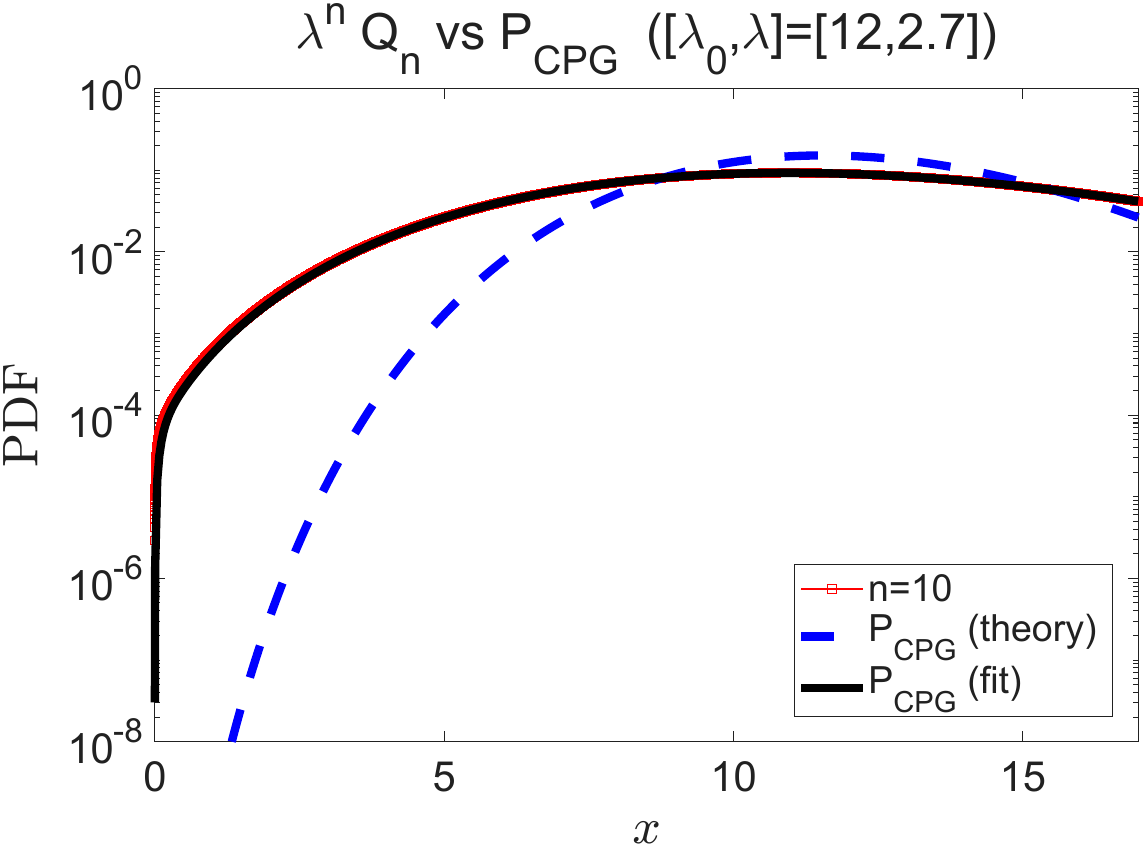}
\includegraphics[width=0.67\columnwidth]{./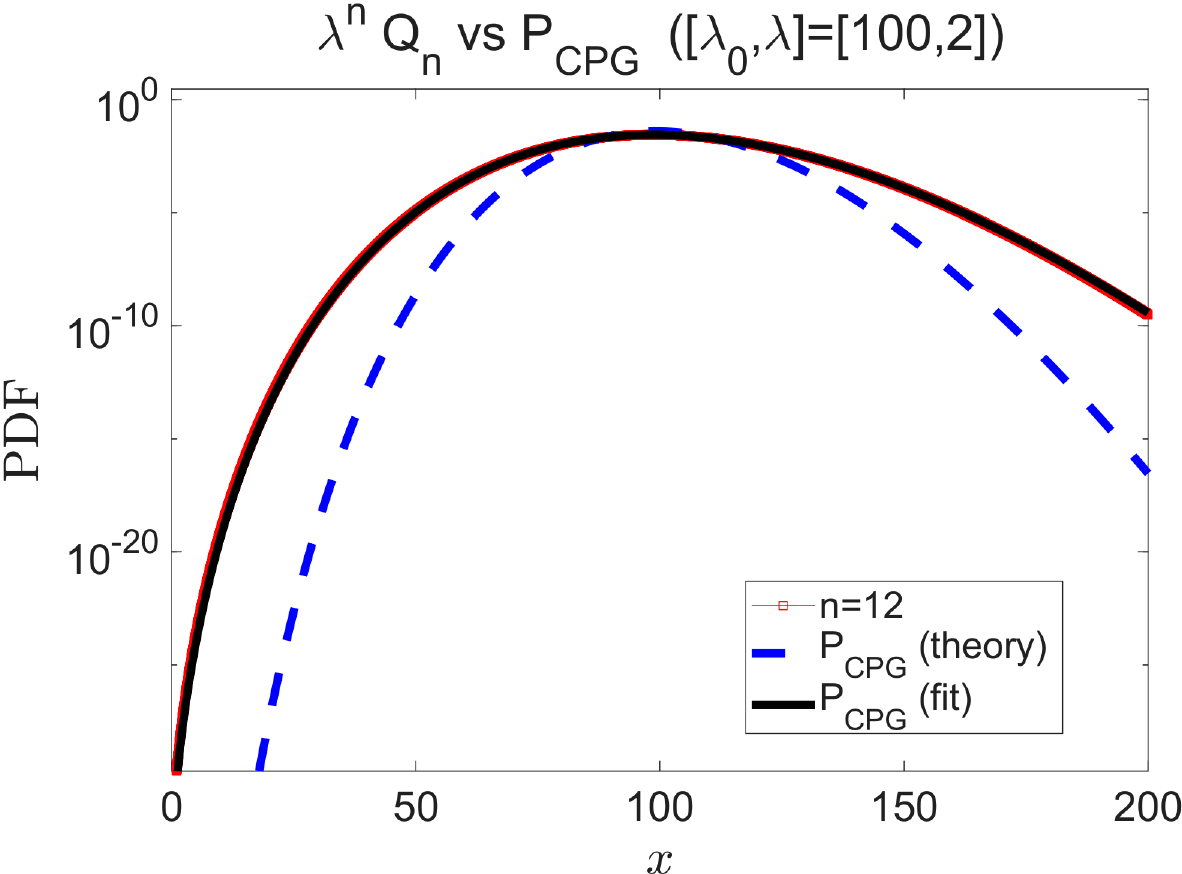}
\caption{Fitted $P_{\mathrm{CPG}}$ (black line), $Q_n$ for large $n$ (joined red squares), and $P_{\mathrm{CPG}}$ with the theoretical parameter values (blue dashed line) are compared in the Poisson case with $\lambda$ fairly larger than $unity$. Three different parameter sets, $(\lambda_0,\lambda)=(1,5)$, $(12,2.7)$, and $(100,2)$ from left to right, are considered. In all the cases, the fitted CPG distribution recovers a nice agreement with $Q_n$. } 
\Lfig{CPGfit}
\end{center}
\end{figure*}
The left panel corresponds to the bottom right panel of \Rfig{condII_far1} in the parameters but compares the fitted CPG (black line) and $Q_n$ for $n = 7$ (red line with squares), as well as the CPG of the parameters with the theoretical values \Req{CPG param II} (blue dashed line). The fitted result showed a nice agreement with $Q_n$ over a wide probability region, though there was a gap in the tail with small probabilities. This suggests that while the CPG distribution is indeed an approximation and does not precisely describe the limiting distribution, it is still useful for approximating regions with high probability values. For further examination, the cases $(\lambda_0,\lambda)=(12,2.7)$ and $(100,2)$ are shown in the middle and right panels, respectively: the former set of the parameter values was taken from the example values of dynode gain of EM in~\cite{Dietz1965}, while the latter one was intended for examining scenarios where $\lambda_0$ is extremely large. In these two cases, the fitted CPGs showed excellent agreement with $Q_n$ at large $n$ even at very small probability values, suggesting that the CPG approximation improves as $\lambda_0$ increases. However, this result may not be entirely surprising because the distributions of both the GW and CPG models approach a normal distribution as $\lambda_0$ and $\hat{\mu}$ increase~\cite[Theorem 3.7.1]{Kaas2008}. 
The theoretical and fitted values of the CPG parameters for these three cases are presented in \Rtab{CPG parameters}.
\begin{table}[h]
  \centering
\caption{The theoretical and fitted values of CPG parameters. }
\begin{tabular}{ccc}
\hline \hline
 &
  \begin{tabular}[c]{@{}c@{}}Theory\\ $(\hat{\mu},\hat{\alpha},\hat{\tau})$\end{tabular} &
  \begin{tabular}[c]{@{}c@{}}Fitted\\ $(\hat{\mu},\hat{\alpha},\hat{\tau})$\end{tabular} \\ \hline
$(\lambda_0,\lambda)=(1,\,5)$  & (8,\,1,\,0.125)    & (0.99,\,3.80,\,0.275) \\
$(\lambda_0,\lambda)=(12,\,2.7)$  & (40.8,\,1,\,0.294) & (11.0,\,2.19,\,0.500) \\
$(\lambda_0,\lambda)=(100,\,2)$  & (200,\,1,\,0.5)    & (79.7,\,1.68,\,0.745) \\ \hline \hline
\end{tabular}
\Ltab{CPG parameters}
\end{table}

Overall, the observation in this section supports the use of the CPG distribution to describe the limiting distribution of the Poisson GW process even when $\lambda$ is not close to unity.
This result suggests the potential of the CPG distribution to model data distributions generated from supercritical GW processes with any $\lambda$ in practical applications.

\section{Discussion}\Lsec{Discussion}

\subsection{Tail behaviors}
\label{sec:tail}
In general, if the moment generating function $M_X(t)=\mathbb{E}(e^{tX})$,
or the cumulant generating function $\log M_X(t)$,
of a random variable $X$ diverges at $t=t_0>0$,
it implies that the tail probability $\mathbb{P}(X>x)$ of $X$ behaves
as $\mathbb{P}(X>x)\sim Ce^{-t_0x}$ with some $C>0$ as $x\to\infty$.
In other words, the distribution of $X$ has an exponential tail. 
A precise statement is given in the next proposition. 
\begin{prop}
  \label{prop:tail}
  Let $X$ be a nonnegative random variable,
  $F_X(x):=\mathbb{P}(X\le x)$ be the cumulative distribution function of $X$,
  and $M_X(t)=\mathbb{E}(e^{tX})$ be the moment generating function of $X$. 
  Let $\bar{F}_X(x):=1-F_X(x)$.
  Then:
  \begin{enumerate}
  \item[(a)] Assume $M_X(t)<\infty$ for some $t>0$.
    One then has $\lim_{x\to\infty}e^{tx}\bar{F}_X(x)=0$.
  \item[(b)] Assume $\sup_{x}e^{tx}\bar{F}_X(x)<\infty$
    for some $t>0$.
    Then for any $t'\in[0,t)$ one has $M_X(t')<\infty$.
  \end{enumerate}
\end{prop}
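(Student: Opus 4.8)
The plan is to handle the two parts with complementary tools: for part (a), a truncation bound together with the integrability of $e^{tX}$; for part (b), the layer-cake (tail-integral) representation of the expectation.

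For part (a), the starting point is that on the event $\{X>x\}$ one has $e^{tX}>e^{tx}$, so that
\begin{equation}
e^{tx}\bar{F}_X(x) = e^{tx}\,\mathbb{E}\!\left[\mathbf{1}_{\{X>x\}}\right] \le \mathbb{E}\!\left[e^{tX}\mathbf{1}_{\{X>x\}}\right].
\end{equation}
The right-hand side is the tail contribution to the finite integral $M_X(t)=\mathbb{E}[e^{tX}]$. I would write $\mathbb{E}[e^{tX}\mathbf{1}_{\{X>x\}}] = M_X(t)-\mathbb{E}[e^{tX}\mathbf{1}_{\{X\le x\}}]$ and invoke monotone convergence: as $x\to\infty$, $\mathbb{E}[e^{tX}\mathbf{1}_{\{X\le x\}}]\uparrow M_X(t)<\infty$, so the remainder tends to $0$. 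Squeezing $e^{tx}\bar{F}_X(x)$ between $0$ and this vanishing quantity yields the claim. The only point requiring care is that $M_X(t)<\infty$ legitimizes the monotone-convergence step, which it does by hypothesis.

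For part (b), I would pass from the tail bound to the moment generating function through the identity, valid for nonnegative $X$ with $g(x)=e^{t'x}$,
\begin{equation}
M_X(t') = \mathbb{E}[e^{t'X}] = 1 + t'\int_0^\infty e^{t'x}\bar{F}_X(x)\,dx,
\end{equation}
which follows from the representation $e^{t'X}=1+\int_0^X t'e^{t'x}\,dx$ and an application of Tonelli's theorem (all integrands are nonnegative). The hypothesis provides a constant $C=\sup_x e^{tx}\bar{F}_X(x)<\infty$, hence $\bar{F}_X(x)\le Ce^{-tx}$ for all $x\ge0$. Substituting this bound gives
\begin{equation}
\int_0^\infty e^{t'x}\bar{F}_X(x)\,dx \le C\int_0^\infty e^{-(t-t')x}\,dx = \frac{C}{t-t'},
\end{equation}
which is finite precisely because $t'\in[0,t)$ forces $t-t'>0$. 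Therefore $M_X(t')\le 1 + t'C/(t-t')<\infty$.

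The main obstacle is not a deep one but a matter of care: in both parts the real work lies in justifying an interchange of limit and integral. In (a) this is the monotone-convergence argument that the tail of the finite integral $M_X(t)$ vanishes; in (b) it is the Tonelli step producing the tail-integral representation. Once these are secured, the remaining estimates are elementary. A minor point worth flagging is the boundary case $t'=0$ in (b), where $M_X(0)=1$ holds trivially and no integral bound is needed.
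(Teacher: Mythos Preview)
Your proof is correct, and for part (a) it is more direct than the paper's. The paper first uses the Chernoff bound to get $e^{tx}\bar{F}_X(x)\le M_X(t)$, then applies integration by parts to show $\int_0^\infty e^{tx}\bar{F}_X(x)\,dx<\infty$, and finally argues that finiteness of this integral together with nonnegativity forces $e^{tx}\bar{F}_X(x)\to0$. Your truncation bound $e^{tx}\bar{F}_X(x)\le\mathbb{E}[e^{tX}\mathbf{1}_{\{X>x\}}]$ combined with monotone convergence delivers the vanishing limit in one step, bypassing the integration by parts and also the not-entirely-immediate passage from integrability of a nonnegative function to existence of a zero limit. For part (b) the two arguments are essentially the same: both rest on the tail-integral identity $M_X(t')=\bar{F}_X(0)+t'\int_0^\infty e^{t'x}\bar{F}_X(x)\,dx$ (the paper recovers it from the integration-by-parts formula already set up in (a), you obtain it via Tonelli), followed by the identical estimate $\int_0^\infty e^{-(t-t')x}\,dx=1/(t-t')$. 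The paper's route has the modest advantage of recycling one identity across both parts; yours gives a cleaner self-contained argument for each.
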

A proof of Proposition~\ref{prop:tail} is given in
Appendix~\ref{sec:prf_tail}.

Tail behavior of the distribution of the limiting random variable $W$ is
linked to that of the distribution of the offspring distribution,
as shown in the following proposition. 
\begin{prop}[Proposition 3 in~\protect\cite{Biggins1981}]
  \label{prop:Biggins}
  Let $f(s)$ and $M_{W}(t)$ be the probability generating function
  of the offspring distribution and the moment generating function
  of $W$. Let 
  \begin{align}
    s_0&:=\sup\{s\ge0:f(s)<\infty\},
    \\
    t_0&:=\sup\{t:M_{W}(t)<\infty\}.
  \end{align}
  One then has:
  \begin{itemize}
  \item[(a)] $s_0=\infty$ if and only if $t_0=\infty$.
  \item[(b)] $s_0\in(1,\infty)$ if and only if $t_0\in(0,\infty)$.
  \end{itemize}
\end{prop}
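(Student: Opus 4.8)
The engine of the proof is the functional equation $M_{W}(\lambda t)=f(M_{W}(t))$ stated in \NReq{functional equation1}, which I would first note holds as an identity in the extended reals $[1,\infty]$: from the branching representation of $W$ it follows by monotone convergence that the two sides agree even when they are infinite. I would then record the elementary monotonicity input. The generating function $f$ is nondecreasing and convex on $[0,s_{0})$ with $f(1)=1$, while $g:=M_{W}$ is nondecreasing with $g(0)=1$ and, by Jensen's inequality together with $\mathbb{E}[W]=1$, satisfies $g(t)\ge e^{t}>1$ for every $t>0$ at which it is finite. The set $\{t\ge0:g(t)<\infty\}$ is an interval $[0,t_{0})$ (possibly closed at $t_{0}$), and $\{s\ge0:f(s)<\infty\}=[0,s_{0})$ with $s_{0}\ge1$.

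Three implications fall out directly from the functional equation and these monotonicities. \textbf{(A)} If $t_{0}=\infty$ then $s_{0}=\infty$: here $g$ is finite and continuous on $[0,\infty)$ with $g(0)=1$ and $g(t)\ge e^{t}\to\infty$, so $g$ assumes every value $s\in[1,\infty)$; for such $s=g(t)$ one has $f(s)=g(\lambda t)<\infty$, whence $f$ is finite on all of $[0,\infty)$. \textbf{(B)} If $s_{0}=\infty$ then $t_{0}\notin(0,\infty)$: otherwise pick any $t\in(t_{0}/\lambda,t_{0})$, so that $g(t)<\infty$ while $\lambda t>t_{0}$ forces $g(\lambda t)=\infty$, contradicting $g(\lambda t)=f(g(t))<\infty$. \textbf{(C)} If $t_{0}>0$ then $s_{0}>1$: were $s_{0}=1$, then for any $t\in(0,t_{0}/\lambda)$ we would have $1<g(t)<\infty$ yet $f(g(t))=\infty$, again clashing with $g(\lambda t)=f(g(t))<\infty$ since $\lambda t<t_{0}$.

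With \textbf{(A)}--\textbf{(C)} in hand the proposition reduces to one further implication, \textbf{(D)}: $s_{0}>1\Rightarrow t_{0}>0$, i.e. a light offspring tail forces $W$ to possess a genuine exponential moment. Granting \textbf{(D)}, I would assemble the claims as follows. For part (a): $s_{0}=\infty$ gives $t_{0}\notin(0,\infty)$ by \textbf{(B)}, and $t_{0}=0$ is excluded by the contrapositive of \textbf{(D)}, so $t_{0}=\infty$; the converse $t_{0}=\infty\Rightarrow s_{0}=\infty$ is exactly \textbf{(A)}. For part (b): $s_{0}\in(1,\infty)$ yields $t_{0}>0$ by \textbf{(D)} and $t_{0}<\infty$ by the contrapositive of \textbf{(A)}, hence $t_{0}\in(0,\infty)$; conversely $t_{0}\in(0,\infty)$ yields $s_{0}>1$ by \textbf{(C)} and $s_{0}<\infty$ by the contrapositive of \textbf{(B)}, hence $s_{0}\in(1,\infty)$. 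The remaining case $s_{0}=1\Leftrightarrow t_{0}=0$ is then automatic by elimination.

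The one genuinely substantive step is \textbf{(D)}, and I expect it to be the main obstacle, because at the boundary of convergence the functional equation degenerates to the uninformative identity $\infty=f(\infty)$ and so cannot by itself manufacture finiteness. My plan is to return to the prelimit variables $W_{n}=Z_{n}/\lambda^{n}$ and to bound their exponential moments uniformly in $n$. Using $\mathbb{E}[e^{\theta W_{n}}]=f_{n}(e^{\theta/\lambda^{n}})$ with $f_{n}=f^{\circ n}$, I would linearise $f$ at its fixed point $s=1$, where $f'(1)=\lambda>1$ and $f''(1)<\infty$ since $s_{0}>1$, and show by induction that the iterates remain in the regime where the expansive linear term dominates the quadratic correction, giving $\sup_{n}\mathbb{E}[e^{\theta W_{n}}]<\infty$ for all sufficiently small $\theta>0$. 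Passing to the limit via Fatou's lemma (legitimate since $W_{n}\to W$ a.s. and the integrand is nonnegative) then yields $\mathbb{E}[e^{\theta W}]\le\liminf_{n}\mathbb{E}[e^{\theta W_{n}}]<\infty$, i.e. $t_{0}\ge\theta>0$. The delicate point is the uniform-in-$n$ control of the iteration: the initial perturbation $e^{\theta/\lambda^{n}}-1\approx\theta\lambda^{-n}$ is amplified roughly $\lambda^{n}$-fold over $n$ steps, so one must verify that the accumulated quadratic corrections stay bounded, which is precisely where the smallness of $\theta$ and the finiteness of $f''(1)$ enter. This is essentially the argument underlying Biggins' Proposition~3.
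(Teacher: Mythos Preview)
The paper does not prove Proposition~\ref{prop:Biggins}; it is quoted as Proposition~3 of Biggins (1981) and used as a black box in Section~\ref{sec:tail}, so there is no in-paper argument to compare your proposal against.

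For what it is worth, your decomposition is sound. Implications \textbf{(A)}, \textbf{(B)}, \textbf{(C)} follow cleanly from the functional equation $M_W(\lambda t)=f(M_W(t))$, read as an identity in $[1,\infty]$, together with the Jensen bound $M_W(t)\ge e^{t}$; the monotonicity and intermediate-value reasoning you give is correct. You correctly isolate \textbf{(D)}, namely $s_0>1\Rightarrow t_0>0$, as the only substantive step, and your plan---control $\mathbb{E}[e^{\theta W_n}]=f_n(e^{\theta/\lambda^n})$ uniformly in $n$ by linearising $f$ at $s=1$ and bounding the accumulated second-order corrections via the geometric sum $\sum_k\lambda^{-k}$---is exactly the standard route (this is essentially Biggins' argument). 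Your sketch is honest about where the work lies but stops short of executing the induction; completing it amounts to choosing $\theta$ small enough, on the order of $(\lambda-1)/f''(1)$, so that the quadratic correction summed over the $n$ iterations remains dominated by the linear term, and then invoking Fatou as you say.
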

Propositions~\ref{prop:tail} and \ref{prop:Biggins} together
imply that the distribution of the limiting random variable $W$
has an exponential tail if the offspring distribution
has an exponential tail,
as well as that the former has a lighter-than-exponential tail
if the latter has a lighter-than-exponential tail.

Returning to our asymptotic results, one observes that the moment generating
functions in both Conditions I and II diverge at
$t=2(\lambda-1)/\kappa_2^*<\infty$, 
implying that the corresponding distribution has an exponential tail
regardless of the offspring distribution.
This is in conflict with Proposition~\ref{prop:Biggins},
showing that our asymptotic result does not in general reproduce the
true tail behavior of $W$.

\subsection{Diffusion approximation}
Diffusion approximation of the GW processes~\cite{Feller1951,Jirina1969,Lindvall1972,Lindvall1974} is closely related with our framework.
The diffusion approximation of the GW process is obtained via
rescaling of the state space and time.
More concretely, one counts generation $n$
as well as the population size $Z_n$ as multiples of
$\Delta\gg1$, via
\begin{equation}
  Y_s=\frac{Z_{s\Delta}}{\Delta},
\end{equation}
and studies the diffusion process defined in the limit $\Delta\to\infty$,
which is called the diffusion branching process. 

Assume a family of the diffusion branching processes
indexed by $\Delta>0$.
For the offspring distribution,
we assume $\kappa_1=\lambda=1+\alpha/\Delta$ with $\alpha\in(-\infty,\infty)$,
and $\kappa_2^*:=\lim_{\Delta\to\infty}\kappa_2\in(0,\infty)$. 
We also assume $Y_0=z_0\Delta$ with $z_0\in[0,\infty)$. 
The Feller-Jir\v{\i}na theorem~\cite{Feller1951,Jirina1969}
(see also \cite{Lindvall1974}) states that,
under some regularity conditions,
the cumulant generating function $\xi(t)$ of $Y_s$
in the limit $\Delta\to\infty$ is given by 
\begin{equation}
  \label{eq:FellerJirina}
  \xi(t)=\frac{z_0te^{\alpha s}}{1-\frac{\kappa_2^*u_s(\alpha)}{2}t},
\end{equation}
where 
\begin{equation}
  u_s(\alpha)=\begin{cases}
  \frac{e^{\alpha s}-1}{\alpha},&\alpha\not=0,\\
  s,&\alpha=0.
  \end{cases}
\end{equation}
One can observe that the state distribution
of the diffusion branching process in the criticality limit
is again given by a CPG distribution. 

It should be noted that in deriving the above result
one takes the limit $\Delta\to\infty$ while
$(\lambda-1)n=\frac{\alpha}{\Delta}\cdot s\Delta=\alpha s$
is kept finite.
In other words, one takes the limits $\lambda\to1$
and $n\to\infty$ simultaneously.
In our argument, on the other hand,
we take the limit $n\to\infty$ first,
and then consider the asymptotic $\lambda\downarrow1$.
It is interesting to observe that, in spite of
the difference in how to take the limits,
one arrives at the same CPG distribution.
It should also be noted that the above result states that 
the distribution of $Y_s$ in the limit $\Delta\to\infty$
has an exponential tail, just as our asymptotic result. 
As argued in Section~\ref{sec:tail},
it may not be consistent with the limiting distribution
of $W$, which has a lighter-than-exponential tail
if the offspring distribution has a lighter-than-exponential tail.

\subsection{Large-$n$ asymptotic}
\label{sec:largen}
\cite{GarciaMillanFontClosCorral2015} performed an analysis on the finite-size scaling of the survival probability
$\mathbb{P}(Z_n>0)$ of the GW branching process by considering sufficiently large but finite $n = m + \ell$ with $\ell \gg m \gg 1$.
One can extend their analysis to discuss the large-$n$ asymptotic
of the cumulant generating function $K_{\bar{W}_{n}}(t)$ of
$\bar{W}_n=(\lambda-1)Z_n/\lambda^n$ as $\lambda\downarrow1$.

For the analysis below, we impose the following assumptions on the offspring distribution.
\begin{asm}
\label{asm:offspring}
\leavevmode
\begin{enumerate}[(i)]
\item The offspring distribution is parameterized
  by its mean $\lambda$. 
\item The cumulant generating function
  \begin{equation}
    \psi(t)=\sum_{\nu=1}^\infty\frac{\kappa_\nu}{\nu!}t^\nu
    \Leq{CGF_off}
  \end{equation}
  of the offspring distribution exists, where $\kappa_\nu$ denotes the $\nu$th cumulant of the offspring distribution and $\kappa_1 = \lambda$.
  \label{asm:i}
\item 
$\lim_{\lambda \downarrow 1}\kappa_2 = \kappa_2^* \in (0,\infty)$. 
\label{asm:ii}
\item
For any $\nu \geq 3$, the order of the $\nu$th cumulant $\kappa_\nu$ is $o(1/(\lambda-1)^{\nu-2})$ in the limit $\lambda \downarrow 1$.
\label{asm:iii}
  \end{enumerate}
\end{asm}
Several common discrete probability distributions, including the Poisson and negative binomial distributions, satisfy these assumptions. 
It should also be noted that these assumptions are weaker than those in \Rsec{Condition I}.

Under Assumption~\ref{asm:offspring} and Condition I, $K_{\bar{W}_{m}}(t)$ satisfies the functional equation
\begin{equation}
  \label{eq:recK}
  K_{\bar{W}_{m+1}}(\lambda t)=\psi(K_{\bar{W}_{m}}(t)),
\end{equation}
where $\psi(t)$ denotes the cumulant generating function
of the offspring distribution.
Near the criticality, we may assume that, for sufficiently large $m$, $K_{\bar{W}_{m}}(t)$
is close to $(\lambda-1)t/(1-\frac{\kappa_2^*}{2}t)$,
which is $\mathcal{O}(\lambda-1)$ as $\lambda \downarrow 1$.
Under this assumption, one may take account of
terms up to second order in $(\lambda-1)$ while ignoring
higher-order terms in~\eqref{eq:recK}, yielding 
\begin{align}
  \label{eq:rec}
  \frac{1}{K_{\bar{W}_{m+1}}(\lambda t)}
  &=\frac{1}{\psi(K_{\bar{W}_{m}}(t))}
  \nonumber\\
  &\approx \frac{1}{\lambda K_{\bar{W}_{m}}(t)+\frac{\kappa_2^*}{2}K_{\bar{W}_{m}}(t)^2}
  \nonumber\\
  &=\frac{1}{\lambda K_{\bar{W}_{m}}(t)\left(1+\frac{\kappa_2^*K_{\bar{W}_{m}}(t)}{2\lambda}\right)}
  \nonumber\\
  &\approx \frac{1}{\lambda K_{\bar{W}_{m}}(t)}
  \left(1-\frac{\kappa_2^*K_{\bar{W}_{m}}(t)}{2\lambda}\right)
  \nonumber\\
  &=\frac{1}{\lambda K_{\bar{W}_{m}}(t)}
  -\frac{\kappa_2^*}{2\lambda^2}.
\end{align}
Note that, in both the second and fourth lines, the approximations are performed by noting that $K_{\bar{W}_m}(t)=\mathcal{O}(\lambda-1)$ is a small quantity as $\lambda\downarrow1$, and by retaining terms up to the second order in $(\lambda-1)$.

Since
\begin{equation}
  \frac{1}{K_{\bar{W}_{m}}(t)}=\frac{1-\frac{\kappa_2^*}{2\lambda}t}{(\lambda-1)t}
\end{equation}
is the fixed point of~\eqref{eq:rec}, 
we let
\begin{equation}
  h_m(t):=\frac{1}{K_{\bar{W}_{m}}(t)}-\frac{1-\frac{\kappa_2^*}{2\lambda}t}{(\lambda-1)t}
  \label{eq:def_h_n}
\end{equation}
be the deviation of $1/K_{\bar{W}_{m}}(t)$ from the fixed point. 
The above equation can then be rewritten in terms of $h_m$ as
the following simple formula: 
\begin{equation}
  h_{m+1}(\lambda t)=\frac{1}{\lambda}h_m(t).
\end{equation}
Let $\mathcal{F}$ be the operator that maps $h(t)$ to $\frac{1}{\lambda}h(\frac{t}{\lambda})$.
The above equation can then be represented concisely as
$h_{m+1}=\mathcal{F}(h_m)$.
It should be noted that $\mathcal{F}$ is a linear operator,
which can be confirmed via
\begin{align}
  \mathcal{F}((af+bg)(t))
  &=\frac{1}{\lambda}(af+bg)\left(\frac{t}{\lambda}\right)
  \nonumber\\
  &=\frac{a}{\lambda}f\left(\frac{t}{\lambda}\right)+\frac{b}{\lambda}g\left(\frac{t}{\lambda}\right)
  \nonumber\\
  &=a\mathcal{F}(f(t))+b\mathcal{F}(g(t)).
\end{align}
One also has
\begin{equation}
  \mathcal{F}(t^k)=\frac{1}{\lambda}\left(\frac{t}{\lambda}\right)^k
  =\frac{t^k}{\lambda^{k+1}},
\end{equation}
showing that $t^k$ is the eigenfunction of $\mathcal{F}$
associated with the eigenvalue $1/\lambda^{k+1}$.

It is known that $\bar{W}_m$ has mean $\lambda-1>0$, 
which allows us to apply Wronski's formula~\cite[Theorem 1.3]{Henrici1974}
to a power series expansion
of $K_{\bar{W}_{m}}(t)/t$.
One consequently finds that 
$h_m(t)$ admits the following series expansion.
\begin{equation}
  h_m(t)=\sum_{k=0}^\infty c_k t^{k-1}.
  \label{eq:power_series_h_n}
\end{equation}
Repeated application of the operator $\mathcal{F}$ to $h_m$ yields
\begin{equation}
  h_{m+\ell}(t)=\mathcal{F}^\ell(h_m)
  =\sum_{k=0}^\infty\frac{c_k}{\lambda^{\ell k}}t^{k-1}.
\end{equation}
This in turn can be translated into the following formula
for $K_{\bar{W}_{m+\ell}}(t)$:
\begin{align}
  &K_{\bar{W}_{m+\ell}}(t)
  =\frac{1}{h_{m+\ell}(t)+\frac{1-\frac{\kappa_2^*}{2\lambda}t}{(\lambda-1)t}}
  \nonumber\\
  &=\frac{(\lambda-1)t}{1+c_0(\lambda-1)
    -\left(\frac{\kappa_2^*}{2\lambda}-\frac{c_1(\lambda-1)}{\lambda^\ell}\right)t
  +\sum_{k=2}^\infty\frac{c_k(\lambda-1)}{\lambda^{\ell k}}t^k}.
\label{eq:bar_K_mplusl_1}
\end{align}
The coefficients $c_0$ and $c_1$ are given by
\begin{align}
c_0 &= 0, \label{eq:c_0} \\
c_1 &= \frac{\kappa_2^*}{2\lambda(\lambda-1)} - \frac{\kappa_2 \sum_{i=1}^{m} \lambda^{i-1}}{2 \lambda^{m+1}} , \label{eq:c_1}
\end{align}
respectively (see Appendix~\ref{sec:order_c_k} for the derivation).
Therefore, \eqref{eq:bar_K_mplusl_1} can be rewritten as
\begin{align}
  &K_{\bar{W}_{m+\ell}}(t) \nonumber \\
  &=\frac{(\lambda-1)t}{1
    - \{ \frac{\kappa_2^* ( 1 - \lambda^{-\ell})}{2\lambda} + \frac{\kappa_2 \sum_{i=1}^{m} \lambda^{i-1}}{2 \lambda^{m+\ell+1}} (\lambda-1) \} t
  +\sum_{k=2}^\infty\frac{c_k(\lambda-1)}{\lambda^{\ell k}}t^k}.
\end{align}
We now fix $m$ and assume the asymptotic of $\ell$ tending large,
via the scaling defined by $\ell=s\Delta$ and $\lambda-1=\alpha/\Delta$
in the limit $\Delta\to\infty$.
This scaling yields $\lambda=1+\alpha/\Delta$
and $\lambda^\ell\approx e^{\alpha s}$. 
Moreover, the order of $c_k$ for $k \geq 2$ is $o(1)$ in the asymptotic regime $\lambda \downarrow 1$ (see Appendix~\ref{sec:order_c_k} for a detailed discussion).
One therefore obtains 
\begin{equation}
   K_{\bar{W}_{n}}(t) = K_{\bar{W}_{m+\ell}}(t) 
  \approx \frac{\alpha t/\Delta}{1-\frac{\kappa_2^*}{2}\left(1-e^{-\alpha s}\right)t}.
\end{equation}
This result suggests that when $\lambda$ is larger than, but close enough to,
1, the distribution of $\bar{W}_n$ for large $n=s\Delta$ will be well
approximated by the CPG distribution whose cumulant generating function
is given by
\begin{equation}
  \tilde{K}(t)=\frac{\alpha t/\Delta}{1-\frac{\kappa_2^*}{2}(1-e^{-\alpha s})t}.
\end{equation}
The cumulant generating function $\xi(t)$ 
of $Y_s:=Z_{s\Delta}/\Delta$ with the initial condition
$Y_0=z_0\Delta$ and in the limit $\Delta\to\infty$ is derived
from the above result via noting that
\begin{align}
  Y_s&=\frac{Z_{s\Delta}}{\Delta}
  =\frac{1}{\Delta}\frac{\lambda^{s\Delta}}{\lambda-1}\bar{W}_{s\Delta}
  =\frac{1}{\Delta}\frac{(1+\alpha/\Delta)^{s\Delta}}{\alpha/\Delta}\bar{W}_{s\Delta}
  \nonumber\\
  &\stackrel{\Delta\to\infty}{\xrightarrow{\hspace{10mm}}}
  \frac{e^{\alpha s}}{\alpha}\bar{W}_{s\Delta},
\end{align}
as 
\begin{equation}
  \xi(t)=\frac{z_0te^{\alpha s}}{1-\frac{\kappa_2^*(e^{\alpha s}-1)}{2\alpha}t}.
\end{equation}
It should be noted that the above result is identical to
the formula~\eqref{eq:FellerJirina} in the Feller-Jir\v{\i}na theorem
for the diffusion branching processes.

\begin{table*}
  \centering
  \caption{Asymptotic and limiting behaviors of GW branching processes
    near criticality. Our results are colored in blue.
    ``$X\sim\mbox{Dist}$'' means that random variable $X$ follows
    the distribution Dist.
    ``$X\mid Y>0\sim\mbox{Dist}$'' means that random variable $X$ follows
    the distribution Dist.\ conditionally on the event $Y>0$.}
  \label{tab:asym}
  \begin{tabular}{|c|c|c|}
    \hline 
    $\lambda$ & $n$ large but finite & $n=\infty$
    \\\hline
    \strut Supercritical & \multirow{2}{*}{$\color{blue}\frac{Z_n}{\lambda^n}\sim\mbox{CPG}$~(Section~\ref{sec:largen})}  & Explosion: $\lim_{n\to\infty}Z_n\mid Z_n>0=\infty$~\cite[Theorem 6.2]{Harris1963}
    \\
    $\lambda\downarrow1$ & & $\color{blue}\lim_{n\to\infty}\frac{Z_n}{\lambda^n}\sim\mbox{CPG}$~(Section~\ref{sec:Theoretical Results})
    \\\hline
    \strut Critical & \multirow{4}{*}{$Z_n\mid Z_n>0\sim\mbox{Geom.}$~\cite[Section 3.4]{Sevastyanov1971}} & Extinction: $\lim_{n\to\infty}Z_n=0$~\cite[Theorem 6.1]{Harris1963}
    \\
    $\lambda=1$ & & $\lim_{n\to\infty}\frac{Z_n}{n}\mid Z_n>0\sim\mbox{Expn.}$~\cite[Theorem 10.1]{Harris1963}
    \\\cline{1-1}\cline{3-3}
    \strut Subcritical & & Extinction: $\lim_{n\to\infty}Z_n=0$~\cite[Theorem 6.1]{Harris1963}
    \\
    $\lambda\uparrow1$ & & $\lim_{n\to\infty}Z_n\mid Z_n>0\sim\mbox{Geom.}$~\cite[Section 3.4]{Sevastyanov1971}
    \\\hline
  \end{tabular}
\end{table*}

Table~\ref{tab:asym} summarizes known results, as well as our contributions,
about asymptotic and limiting behaviors
of the GW process in the criticality limit $\lambda\to1$. 

\section{Conclusion}\Lsec{Conclusion}
In this work, we studied the asymptotics of supercritical GW processes as they approach criticality, $\lambda \downarrow 1$.
We derived a probability distribution in closed form whose cumulant generating function matches that of the limiting distribution of the GW process up to the leading-order in $\lambda-1$ and showed that the derived distribution belongs to a class of CPG distributions.
Numerical experiments revealed that these CPG models were in good agreement with the corresponding GW models for sufficiently large generations under a reasonable parameter regime.
We also showed that, even when $\lambda$ is not close to unity and the offspring distribution is Poisson, the appropriately fitted CPG model closely matched the corresponding GW model.

In the discussion section, we argued the tail behaviors of the distribution of the limiting random variable of the supercritical GW process.
While this discussion implies the limitation of the approximation capability of the CPG model, it should be noted that this limitation may not pose a significant problem in practical applications: 
for example, one is typically interested more in bulk behaviors
rather than in tail behaviors in analysis of data from EMs. 
We furthermore provided discussion to clarify the position and contribution of our work in comparison to other related studies.
They together suggest that the CPG distribution would provide
a good approximation of the population-size distribution of
the GW processes in the asymptotic $\lambda\downarrow1$. 

Our results suggest the potential of the CPG model to analyze datasets generated from cascaded multiplication processes, such as detection signals of EMs and population sizes of individuals with specific biological characteristics, offering various mathematical and statistical advantages for data analysis.
In particular, its application holds promise for large-scale datasets, which have been challenging to analyze using supercritical GW models.

\section*{Acknowledgments}\Lsec{Acknowledgements}
This work was partially supported by JST, CREST Grant number JPMJCF1862, Japan (KU, TO, TT), JSPS KAKENHI under Grant 22K12179 (TO), Grant 21J23154
(KU),
Grant-in-Aid
for Transformative Research Areas (A), ``Foundation of machine learning physics'' (22H05117) (TO), and 
Grant-in-Aid for Transformative Research Areas (A), ``Shin-biology regulated by protein lifetime'' (24H01895) (TT).

\section*{DATA AVAILABILITY}\Lsec{Data_availability}
The codes to reproduce the experimental results reported in this article are openly available~\cite{Uemura2025zenodo}.


\appendix
\section{Recursion Under Condition I}\Lsec{Recursion in the condition I}
In this appendix we derive the extended recursion formula for the probability distribution of $Z_n$ under Condition I. We write the probability distribution of $Z_n$ under Condition I as
\be
P_n(\ell)\equiv \mathbb{P}(Z_{n}=\ell) \quad (n\geq 0),
\ee
where $P_0(\ell)=\mathbb{P}(Z_{0}=\ell)=\delta_{\ell,1}$ by definition. The key to deriving the formula is the recursive structure~\Req{f_n recursion} of the probability generating functions, which reads
\be
f_n(s)=f(f_{n-1}(s)).
\Leq{fn_recursion}
\ee
Evaluating the $\ell$th derivative of both sides of~\Req{fn_recursion}
yields 
\be
\frac{d^\ell}{ds^\ell}f_n(s)
=\frac{d^{\ell-1}}{ds^{\ell-1}}f'(f_{n-1}(s))f'_{n-1}(s).
\Leq{fn_recursion_derivative}
\ee
When the offspring distribution is Poisson with mean $\lambda$,
one has $f'(s)=\lambda f(s)$, on the basis of which the equality
$f'(f_{n-1}(s))=\lambda f(f_{n-1}(s))=\lambda f_n(s)$ holds,
where the second equality is due to~\eqref{eq:fn_recursion}~\cite{Lombard1961}.
One can then represent the right-hand side of \Req{fn_recursion_derivative} as a polynomial of the derivatives of $f_{n}f'_{n-1}$ with simple coefficients.
Since the probability distribution is derived from the probability generating function via
\be
P_n(\ell)=\frac{1}{\ell!}\left. \frac{d^\ell}{ds^\ell}f_n(s) \right|_{s=0},
\Leq{PGF2PD}
\ee
the derivatives of $f_{n}$ and $f'_{n-1}$ are converted to the probability distributions through \Req{PGF2PD}, which allows us to convert~\Req{fn_recursion_derivative} into a recursion relation between $P_n$ and $P_{n-1}$.

We extend the above-mentioned derivation in~\cite{Lombard1961}
via assuming the condition $f'(s)=a(f(s))^m$ for the probability generating
function $f$ of the offspring distribution, with some $a$ and
a positive integer $m$.
Noting that $f(1)=1$ and $f'(1)=\lambda$ hold, one should have $a=\lambda$, 
so that our assumption now reads 
\be
f'(s)=\lambda(f(s))^m.
\Leq{f'-assumption}
\ee
As mentioned above, the Poisson offspring distribution satisfies
this assumption with $m=1$, which is the case dealt with in~\cite{Lombard1961}. 
The geometric offspring distribution satisfies it with $m=2$.
More generally, for $m\in\{2,3,\ldots\}$,
the offspring distribution which is negative binomial
with parameters $r=1/(m-1)$ and $p\in(0,1)$ satisfies it
with $\lambda=r(1-p)/p$.
We show this fact. 

The probability function 
of the negative binomial distribution~\footnote{Here we adopt an extended definition of the negative binomial distribution which allows non-integer values of $r$.} with 
the parameter $r>0$
and the mean $\lambda=r(1-p)/p$ is given by
\begin{equation}
  P_{\mathrm{NB}}(x)={x+r-1\choose x}p^r(1-p)^x,
  \quad x\in\{0,1,2,\ldots\}.
\end{equation}
Its probability generating function is given by 
\be
f_{\rm NB}(s)=\left(\frac{r}{r+\lambda-\lambda s}\right)^r.
\Leq{f_NB}
\ee
Here the parameter $p \in (0,1]$ is usually called success probability and it is related to the mean $\lambda$ as
\be
p=\frac{r}{r+\lambda}.
\ee
Letting $r=1$ in the negative binomial distribution yields 
the geometric distribution 
\be
P_{\rm geo}(x)=p(1-p)^x, \quad x\in\{0,1,2,\ldots\}.
\Leq{P_geo}
\ee
It is readily seen that 
$f_{\mathrm{NB}}$ with $m=1+1/r\in\mathbb{N}$ satisfies \Req{f'-assumption}. 

The assumption~\Req{f'-assumption} allows us to rewrite the right-hand side of \Req{fn_recursion_derivative} as
\be
&&
\hspace{-0.0cm}
\frac{d^{\ell-1}}{ds^{\ell-1}}f'(f_{n-1}(s))f'_{n-1}(s)
=
\lambda
\frac{d^{\ell-1}}{ds^{\ell-1}}
\lb f_{n}(s)\rb^{m} f'_{n-1}(s)
\no \\ &&
=
\lambda 
\sum_{i_1+i_2+\ldots +i_{m+1}=\ell-1}
\binom{\ell-1}{i_1,i_2,\ldots,i_{m+1}}
\no \\ &&
\times f_n^{(i_1)}(s)\cdots f_n^{(i_m)}(s)f_{n-1}^{(i_{m+1}+1)}(s),
\ee
where the first equality is obtained by combining \Reqs{fn_recursion}{f'-assumption}, and where the last line is derived using the generalized Leibniz rule: $f^{(i)}$ denotes the $i$th derivative of $f$, and $\sum_{i_1+i_2+\ldots +i_{m+1}=\ell-1}$ denotes the summation over the set $\{ (i_1,i_2,\ldots,i_{m+1})\in \{0,1,\ldots, \ell-1\}^{m+1} \mid i_1+i_2+\cdots i_{m+1}=\ell-1  \}$ of multi-indices. Inserting this into \Req{fn_recursion_derivative}, dividing both sides by $\ell!$, and substituting $s=0$, we arrive at the following recursion:
\be
&&
P_n(\ell)
=
\frac{\lambda}{\ell} 
\sum_{i_1+i_2+\ldots i_{m+1}=\ell-1}
(i_{m+1}+1)
\no \\ &&
\times P_n(i_1)\cdots P_n(i_m)P_{n-1}(i_{m+1}+1),
\Leq{P_recursion}
\ee
which is valid for any $n\geq 2$.
For $n=1$, $P_{1}(\ell)=\mathbb{P}(Z_1=\ell)=\mathbb{P}(X_1=\ell)$ is nothing
but the offspring distribution.
The boundary condition $P_n(0)$ necessary for \Req{P_recursion} is obtained from the recursion as well:
\be
P_{n}(0)=f_{n}(0)=f(f_{n-1}(0))=f(P_{n-1}(0)).
\Leq{initcond}
\ee
By recursively evaluating \Reqs{P_recursion}{initcond}, $( P_n)_n$ is determined without ambiguity.
Since the offspring distribution that we assume (Poisson or negative binomial)
is with infinite support, 
one needs to truncate the range of $\ell$ in calculating $(P_n)_n$. 

\section{Recursion Under Condition II}\Lsec{Recursion in the condition II}
In this appendix we derive the extended recursion formula
for the probability distribution of $Z_n$ 
under Condition II. 
We write the probability distribution of $Z_n$ under Condition II as
\be
Q_n(\ell)\equiv \mathbb{P}(Z_{n}=\ell)\quad (n\geq 0).
\ee
The corresponding probability generating function is denoted as $g_n(s)$ hereafter. 
Using  this notation and the underlying GW process \NReq{GW process}, the following recurrence relation can be derived:
\begin{align}
g_n(s) 
= g_0\circ(\overbrace{f\circ f\circ\cdots\circ f}^{\text{$n$ times}})(s)
= g_0(f_n(s))\quad (n\geq 0),
\Leq{PGF_nested_II}
\end{align}
where $f(s)$ denotes the probability generating function of $X_n^{(i)}$ and where $f_n(s)$ is the probability generating function of $Z_n$ conditional on $Z_0=1$ as in Condition I.
Evaluating the $\ell$th derivative of both sides of \Req{PGF_nested_II} leads to
\be
\frac{d^\ell}{ds^\ell}g_n(s)
=\frac{d^{\ell-1}}{ds^{\ell-1}}g_0'(f_{n}(s))f'_{n}(s),
\Leq{gn_recursion_derivative}
\ee
Similarly to \Req{f'-assumption}, for the probability generating function
$g_0$ of $Z_0$ we assume 
\be
g_0'(s)=\lambda_0(g_0(s))^q,
\Leq{g0'-assumption}
\ee
with a positive integer $q$,
where $\lambda_0>0$ is the mean of $Z_0$.
It amounts to assuming that $Z_0$ is either Poisson ($q=1$) 
or negative binomial ($q\in\{2,3,\ldots\}$). 
The case $q=m=1$ was dealt with in~\cite{Dietz1965}. 
Equation~\Req{gn_recursion_derivative} is thus rewritten as
\be
&&
\frac{d^\ell}{ds^{\ell-1}}g_0'(f_{n}(s))f'_{n}(s)
\no \\ &&
=
\lambda_0 
\sum_{i_1+i_2+\ldots +i_{q+1}=\ell-1}
\binom{\ell-1}{i_1,i_2,\ldots,i_{q+1}}
\no \\ &&
\times g_n^{(i_1)}(s)\cdots g_n^{(i_q)}(s)
f_{n}^{(i_{q+1}+1)}(s),
\ee
Inserting this into \Req{gn_recursion_derivative} and dividing both sides by $\ell!$, we obtain
\be
&&
Q_n(\ell)
=
\frac{\lambda_0}{\ell} 
\sum_{i_1+i_2+\ldots i_{q+1}=\ell-1}
(i_{q+1}+1)
\no \\ &&
\times Q_n(i_1)\cdots Q_n(i_q)P_n(i_{q+1}+1),
\Leq{Q_recursion}
\ee
at $s=0$. This formula holds for $n\geq 1$ and the boundary condition $Q_n(0)$ is given by
\be
Q_{n}(0)=g_{0}(f_n(0))=g_{0}(P_n(0)).
\Leq{initcond_Q}
\ee
The solution to \Reqs{Q_recursion}{initcond_Q} is efficiently computed recursively on top of $( P_n)_n$ which are evaluated with \Reqs{P_recursion}{initcond}. These constitute the algorithmic basis for computing the distributions of $(Z_n)_n$ under Condition II. 

\section{Recursion Specific to the Geometric Distribution}\label{sec:Recursion Specific to the Geometric Distribution}
The probability generating function of the geometric distribution is given by \Req{f_NB} with $r=1$. Combining this with \Req{fn_recursion}, it is easy to show that $f_n$ has a linear fractional form and that its parameters satisfy the following recursive equation:
\be
&&
f_n(s)=\frac{c_n+d_n s}{a_n-b_n s},
\Leq{fn_geo}
\\
&&
\left(
\begin{array}{c}
 a_{n+1}     \\
b_{n+1}      \\
c_{n+1}     \\ 
 d_{n+1}   
\end{array}
\right)
=
\left(
\begin{array}{c}
 (1+\lambda) a_{n} -  \lambda c_n     \\
 (1+\lambda) b_{n} + \lambda d_n     \\
 a_{n}     \\ 
 -b_{n}    
\end{array}
\right),
\Leq{recursion_geoparam}
\ee
with the initial condition $(a_1,b_1,c_1,d_1)=(1+\lambda,\lambda,1,0)$. By solving this numerically, the probability generating function for any $n$ can be easily computed. Note that directly iterating \Req{recursion_geoparam} causes the parameter values to grow rapidly, and hence it is technically better to normalize them by multiplying an appropriate identical factor to both the numerator and denominator of \Req{fn_geo}. Given $f_n$, the distribution of $Z_n$ can be computed by a power series expansion of the denominator of  \Req{fn_geo}, yielding
\begin{align}
P_n(0)=\frac{c_n}{a_n}, 
\quad
P_n(\ell)=\frac{b_n^{\ell-1}}{a_n^{\ell+1}}(c_n b_n+a_n d_n)~~(\ell \geq 1).
\end{align}

\section{Proof of Proposition~\protect\ref{prop:tail}}
\label{sec:prf_tail}
\begin{proof}
  Assume that $M_X(t)=\mathbb{E}(e^{tX})$ is finite at some $t>0$.
  From the Chernoff bound, one has, for any $x\ge0$, 
  \begin{align}
    \bar{F}_X(x)&=\mathbb{P}(X>x)\le\mathbb{P}(X\ge x)=\mathbb{P}(e^{tX}\ge e^{tx})
    \nonumber\\
    &\le\frac{\mathbb{E}(e^{tX})}{e^{tx}}=\frac{M_X(t)}{e^{tx}}.
  \end{align}
  Letting $\tau_X(t,x)=e^{tx}\bar{F}_X(x)$, the above inequality means that $\tau_X(t,x)\le M_X(t)<\infty$ for any $x\ge0$.
  
  For $z\ge0$, let
  \begin{equation}
    m_X(t,z):=\int_0^ze^{tx}dF_X(x).
  \end{equation}
  One then has $M_X(t)=\lim_{z\to\infty}m_Z(t,z)$.
  The integration-by-parts formula gives 
  \begin{align}
    m_X(t,z)
    &=\left[e^{tx}(F_X(x)-1)\right]_0^z
    -\int_0^z(F_X(x)-1)te^{tx}\,dx
    \nonumber\\
    &=t\int_0^z\tau_X(t,x)\,dx
    +\bar{F}_X(0)-\tau_X(t,z)
    \label{eq:prop2tmp}\\
    &\ge t\int_0^z\tau_X(t,x)\,dx
    +\bar{F}_X(0)-M_X(t).
  \end{align}
  Taking the limit $z\to\infty$, one obtains
  \begin{equation}
    \int_0^\infty\tau_X(t,x)\,dx
    \le\frac{2M_X(t)-\bar{F}_X(0)}{t}<\infty,
  \end{equation}
  which, together with $\tau_X(t,x)\ge0$, implies
  \begin{equation}
    \lim_{x\to\infty}\tau_X(t,x)=0,
  \end{equation}
  proving item (a).

  We next prove (b).
  Let $h_X(t):=\sup_x\tau_X(t,x)$, which is finite by assumption.
  For any $z\in[0,\infty)$ and any $t'\in[0,t)$, one has 
  \begin{align}
    \int_0^z\tau_X(t',x)\,dx
    &=\int_0^ze^{-(t-t')x}\tau_X(t,x)\,dx
    \nonumber\\
    &\le\int_0^ze^{-(t-t')x}h_X(t)\,dx
    \nonumber\\
    &=h_X(t)\frac{1-e^{-(t-t')z}}{t-t'}.
  \end{align}
  Combining this and~\eqref{eq:prop2tmp}, one obtains
  \begin{align}
    m_X(t',z)&\le t\,h_X(t)\frac{1-e^{-(t-t')z}}{t-t'}
    +\bar{F}_X(0)-\tau_X(t',z)
    \nonumber\\
    &\le t\,h_X(t)\frac{1-e^{-(t-t')z}}{t-t'}
    +\bar{F}_X(0).
  \end{align}
  Taking the limit $z\to\infty$,
  one has
  \begin{equation}
    M_X(t')\le\frac{t\,h_X(t)}{t-t'}+\bar{F}_X(0)<\infty,
  \end{equation}
  proving item (b).
\end{proof}

\section{Order of $c_k$ in the asymptotic $\lambda \downarrow 1$}
\label{sec:order_c_k}

The objective here is to evaluate the order of the coefficients $\{c_k\}$ in \eqref{eq:power_series_h_n}.

\begin{lem}
\label{lem:1}
Under Assumption~\ref{asm:offspring}, let 
\begin{align}
\bar{W}_m = \frac{(\lambda-1)}{\lambda^m}Z_m,
\label{eq:bar_W_m}
\end{align}
where $Z_m$ is a random variable defined by \eqref{eq:GW process} with $Z_0 \equiv 1$.
Assume that the cumulant generating function $K_{\bar{W}_{m}}(t)$ of $\bar{W}_m$ exists, i.e., 
\begin{align}
K_{\bar{W}_{m}}(t) = \sum_{k=1}^{\infty} \frac{\bar{\kappa}_k^{(m)}}{k!}t^k
\label{eq:power series bar_K_m}
\end{align}
is absolutely convergent on an open interval containing $t=0$.
Then, for any $m$, the first and second cumulants of $\bar{W}_m$ are given by 
\begin{align}
\bar{\kappa}_1^{(m)} &= \lambda-1, \label{eq:kappa_1_m} \\
\bar{\kappa}_2^{(m)} &= \frac{\kappa_2 (\lambda-1)}{\lambda}\left( 1 - \frac{1}{\lambda^m} \right). \label{eq:kappa_2_m}
\end{align}
Moreover, for $k \geq 3$ the order of the $k$th cumulant $\bar{\kappa}_k^{(m)}$ is $o\left( (\lambda-1)^2 \right)$ in the asymptotic regime $\lambda \downarrow 1$.
\end{lem}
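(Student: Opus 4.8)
The plan is to turn the cumulant functional equation \eqref{eq:recK}, $K_{\bar{W}_{m+1}}(\lambda t)=\psi(K_{\bar{W}_{m}}(t))$, into recursions in $m$ for the individual cumulants by matching powers of $t$. Writing $K_{\bar{W}_{m}}(t)=\sum_{k\ge1}\frac{\bar{\kappa}_k^{(m)}}{k!}t^k$ and $\psi(t)=\sum_{\nu\ge1}\frac{\kappa_\nu}{\nu!}t^\nu$ with $\kappa_1=\lambda$, and substituting the former into the latter, the coefficient of $t^k$ on each side yields a first-order recursion relating $\bar{\kappa}_k^{(m+1)}$ to $\bar{\kappa}_k^{(m)}$ and to products of lower-index cumulants. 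The initial data come from Condition I: since $\bar{W}_0=(\lambda-1)Z_0=\lambda-1$ is deterministic, $\bar{\kappa}_1^{(0)}=\lambda-1$ and $\bar{\kappa}_k^{(0)}=0$ for all $k\ge2$. The first cumulant is then immediate, either by matching the $t^1$ coefficients (which gives $\bar{\kappa}_1^{(m+1)}=\bar{\kappa}_1^{(m)}$) or directly from $\mathbb{E}[\bar{W}_m]=(\lambda-1)\lambda^{-m}\mathbb{E}[Z_m]=\lambda-1$; either way one obtains \eqref{eq:kappa_1_m}.

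Matching the $t^2$ coefficients produces the inhomogeneous linear recursion $\bar{\kappa}_2^{(m+1)}=\lambda^{-1}\bar{\kappa}_2^{(m)}+\lambda^{-2}\kappa_2(\lambda-1)^2$, whose source is the $\nu=2$ contribution $\frac{\kappa_2}{2}(\bar{\kappa}_1^{(m)})^2$. Solving this scalar recursion with the initial value $\bar{\kappa}_2^{(0)}=0$ is a routine geometric summation and gives the closed form \eqref{eq:kappa_2_m}.

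The substantive part is the order bound for $k\ge3$. Matching the $t^k$ coefficients yields a recursion of the form $\bar{\kappa}_k^{(m+1)}=\lambda^{-(k-1)}\bar{\kappa}_k^{(m)}+\lambda^{-k}k!\,R_k^{(m)}$, where $R_k^{(m)}$ is a finite sum, over compositions $j_1+\dots+j_\nu=k$ with $j_i\ge1$ and $2\le\nu\le k$, of terms $\kappa_\nu\prod_{i=1}^{\nu}\bar{\kappa}_{j_i}^{(m)}$ (with combinatorial constants). I would proceed by strong induction on $k$, the cases $k\in\{1,2\}$ being already settled and serving as the base. Because $\nu\ge2$ forces every part to satisfy $j_i\le k-1$, the factors with $j_i\ge3$ are covered by the induction hypothesis $\bar{\kappa}_{j}^{(m)}=o((\lambda-1)^2)$, while the factors with $j_i\in\{1,2\}$ obey $\bar{\kappa}_1^{(m)}=\Theta(\lambda-1)$ and $\bar{\kappa}_2^{(m)}=O((\lambda-1)^2)$ by \eqref{eq:kappa_1_m} and \eqref{eq:kappa_2_m}. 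Classifying a term by the number $a:=\#\{i:j_i=1\}$, the product is therefore $O((\lambda-1)^{2\nu-a})$. Invoking the conditions $\kappa_2=O(1)$ and $\kappa_\nu=o((\lambda-1)^{-(\nu-2)})$ for $\nu\ge3$ from Assumption~\ref{asm:offspring}, one finds that each term is $o((\lambda-1)^{\nu-a+2})$; since $a\le\nu$ this exponent is at least $2$, so every term, and hence $R_k^{(m)}$, is $o((\lambda-1)^2)$.

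Finally, the recursion for $\bar{\kappa}_k^{(m)}$ is linear with contraction factor $\lambda^{-(k-1)}$, which stays bounded as $\lambda\downarrow1$; starting from $\bar{\kappa}_k^{(0)}=0$ and driven by a source that is $o((\lambda-1)^2)$ at each of the finitely many steps, its solution $\bar{\kappa}_k^{(m)}$ is a finite sum of bounded multiples of $o((\lambda-1)^2)$ quantities and is therefore itself $o((\lambda-1)^2)$, closing the induction. I expect the bookkeeping of orders inside $R_k^{(m)}$ to be the main obstacle: the estimate must be tight enough that the single borderline configuration, all parts equal to $1$ (which forces $\nu=k$ and uses the full strength of the $\nu$th-cumulant bound in Assumption~\ref{asm:offspring} at $\nu=k$), lands exactly at order $o((\lambda-1)^2)$ rather than leaking to a larger order, while every other configuration is verified to be strictly smaller.
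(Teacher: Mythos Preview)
Your approach is essentially the paper's: both derive cumulant recursions from the functional equation $K_{\bar W_{m+1}}(\lambda t)=\psi(K_{\bar W_m}(t))$, solve the $k=1,2$ cases explicitly, and handle $k\ge3$ by strong induction on $k$ via an order analysis of the inhomogeneous source. The paper initializes at $m=1$ and solves the linear recursion in closed form before bounding, whereas your choice of $m=0$ (so that $\bar\kappa_k^{(0)}=0$ for $k\ge2$) together with a direct bound on the source is a mild simplification of the same argument.

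There is one small slip to patch. Your unified estimate ``each term is $o((\lambda-1)^{\nu-a+2})$'' does not follow for $\nu=2$: from $\kappa_2=O(1)$ and product $=O((\lambda-1)^{4-a})$ you only get $O((\lambda-1)^{4-a})$, and if $a=\nu=2$ this would be $O((\lambda-1)^2)$, not $o((\lambda-1)^2)$. The argument is rescued by the constraint you did not invoke: for $\nu=2$ the composition $j_1+j_2=k\ge3$ forbids $j_1=j_2=1$, so $a\le1$, the exponent is at least $3$, and $O((\lambda-1)^3)=o((\lambda-1)^2)$. Make this case distinction explicit; the paper does the analogous split by treating the $\kappa_2$-terms separately from the $\kappa_\nu$, $\nu\ge3$, terms.
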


\begin{proof}
Substituting \eqref{eq:CGF_off} and \eqref{eq:power series bar_K_m} into the functional equation~\eqref{eq:recK}, expanding both sides into power series in $t$, and comparing the coefficients of $t$-proportional terms, one has
\begin{align}
\lambda \bar{\kappa}_1^{(m+1)} = \lambda \bar{\kappa}_1^{(m)}.
\end{align}
From this recurrence relation and $\bar{\kappa}_1^{(1)} = \lambda-1$, one obtains \eqref{eq:kappa_1_m}.

Similarly, comparing the coefficients of $t^2$-proportional terms, one has
\begin{align}
\frac{\lambda^2}{2} \bar{\kappa}_2^{(m+1)} = \frac{\lambda}{2} \bar{\kappa}_2^{(m)} + \frac{\kappa_2}{2} \left\{ \bar{\kappa}_1^{(m)} \right\}^2, 
\end{align}
which is simplified to
\begin{align}
\bar{\kappa}_2^{(m+1)} = \frac{1}{\lambda} \bar{\kappa}_2^{(m)} + \frac{\kappa_2 (\lambda-1)^2}{\lambda^2}.
\end{align}
By solving this recurrence relation with $\bar{\kappa}_2^{(1)} = \left( \frac{\lambda-1}{\lambda} \right)^2 \kappa_2$, one obtains \eqref{eq:kappa_2_m}.

We prove the remaining claim via induction on $k$.

For $k=3$, comparing the coefficients of $t^3$-proportional terms, one has
\begin{align}
\frac{\lambda^3}{6} \bar{\kappa}_3^{(m+1)} = \frac{\lambda}{6} \bar{\kappa}_3^{(m)} + \frac{\kappa_2}{2} \bar{\kappa}_1^{(m)} \bar{\kappa}_2^{(m)} + \frac{\kappa_3}{6} \left\{ \bar{\kappa}_1^{(m)} \right\}^3.
\label{eq:comparing_coef_of_t3}
\end{align}
From \eqref{eq:kappa_1_m} and \eqref{eq:kappa_2_m}, \eqref{eq:comparing_coef_of_t3} is simplified to
\begin{align}
\bar{\kappa}_3^{(m+1)} = \frac{1}{\lambda^2} \bar{\kappa}_3^{(m)} 
+ \frac{3\kappa_2^2 \sum_{i=1}^{m} \lambda^{i-1}}{\lambda^{m+4}} (\lambda-1)^3
+ \frac{\kappa_3 (\lambda-1)^3}{\lambda^3}.
\end{align}
By solving this recurrence relation with $\bar{\kappa}_3^{(1)} = \left( \frac{\lambda-1}{\lambda} \right)^3 \kappa_3$, one obtains 
\begin{align}
\bar{\kappa}_3^{(m)} =& \frac{1}{\lambda^{2(m-1)}} \left( \frac{\lambda-1}{\lambda} \right)^3 \kappa_3 \nonumber \\
&+ \frac{(\lambda-1)\sum_{i=1}^{2(m-1)}\lambda^{i-1}}{\lambda^{2(m-1)}} \beta_3,
\label{eq:bar_kappa_3_m}
\end{align}
where
\begin{align}
\beta_3 =& \frac{\lambda^{2}}{(\lambda-1)(\lambda+1)} \nonumber \\
& \times \left\{ \frac{3\kappa_2^2 \sum_{i=1}^{m} \lambda^{i-1}}{\lambda^{m+4}} (\lambda-1)^3
+ \frac{\kappa_3 (\lambda-1)^3}{\lambda^3} \right\}.
\label{eq:beta_3}
\end{align}
From \eqref{eq:beta_3} and Assumption~\ref{asm:offspring}, one has $\beta_3=o(\lambda-1)$ in the asymptotic regime $\lambda \downarrow 1$.
Thus, from \eqref{eq:bar_kappa_3_m} and Assumption~\ref{asm:offspring}-(\ref{asm:iii}), the third cumulant $\bar{\kappa}_3^{(m)}$ is $o\left( (\lambda-1)^2 \right)$, showing that the claim holds for $k=3$.

For $k \geq 4$, assume that, for all $l \in \{ 3, \cdots, k-1 \}$, the $l$th cumulant $\bar{\kappa}_l^{(m)}$ is $o\left( (\lambda-1)^2 \right)$ in the asymptotic regime $\lambda \downarrow 1$.
Comparing the coefficients of $t^k$-proportional terms, one has
\begin{align}
\frac{\lambda^k}{k!} \bar{\kappa}_k^{(m+1)} = \frac{\lambda}{k!} \bar{\kappa}_k^{(m)} + \frac{\kappa_2}{2} \sum_{l=1}^{k-1} \frac{\bar{\kappa}_l^{(m)}}{l!} \frac{\bar{\kappa}_{k-l}^{(m)}}{(k-l)!} + (**),
\label{eq:recurrence_relation_for_kappa_k}
\end{align}
where the part $(**)$ is a linear combination of terms of the form $\kappa_\nu \prod_{j=1}^{\nu}\bar{\kappa}_{i_j}^{(m)}$, where $3\leq \nu \leq k$, $\sum_{j=1}^{\nu}i_j=k$, and $1 \leq i_j \leq k-\nu+1$.
Simplifying \eqref{eq:recurrence_relation_for_kappa_k} yields
\begin{align}
\bar{\kappa}_k^{(m+1)} = \frac{1}{\lambda^{k-1}} \bar{\kappa}_k^{(m)} + \left\{ \frac{\kappa_2}{2\lambda^k} \sum_{l=1}^{k-1} \binom{k}{l} \bar{\kappa}_l^{(m)} \bar{\kappa}_{k-l}^{(m)} + \frac{k!}{\lambda^{k}} (**) \right\}.
\label{eq:recurrence_relation_for_kappa_k_2}
\end{align}
By solving this recurrence relation with $\bar{\kappa}_k^{(1)} = \left(\frac{\lambda-1}{\lambda}\right)^k \kappa_k$, one obtains
\begin{align}
\bar{\kappa}_k^{(m)} =& \frac{1}{\lambda^{(k-1)(m-1)}} \left(\frac{\lambda-1}{\lambda}\right)^k \kappa_k \nonumber \\
&+ \frac{(\lambda-1)\sum_{i=1}^{(k-1)(m-1)}\lambda^{i-1}}{\lambda^{(k-1)(m-1)}} \beta_k,
\end{align}
where
\begin{align}
\beta_k =& \frac{\lambda^{k-1}}{(\lambda-1)\sum_{i=1}^{k-1}\lambda^{i-1}} \nonumber \\
& \times \left\{ \frac{\kappa_2}{2\lambda^k} \sum_{l=1}^{k-1} \binom{k}{l} \bar{\kappa}_l^{(m)} \bar{\kappa}_{k-l}^{(m)} + \frac{k!}{\lambda^{k}} (**) \right\}.
\label{eq:beta_k}
\end{align}
Regarding the first term inside the braces on the right-hand side of \eqref{eq:beta_k}, from \eqref{eq:kappa_1_m}, \eqref{eq:kappa_2_m}, the inductive hypothesis, and Assumption~\ref{asm:offspring}, its order is given as 
\begin{align}
\frac{\kappa_2}{2\lambda^k} \sum_{l=1}^{k-1} \binom{k}{l} \bar{\kappa}_l^{(m)} \bar{\kappa}_{k-l}^{(m)} = o\left( (\lambda-1)^3 \right),
\label{eq:leading_order_first_term_2}
\end{align}
which arises from the terms of the form $\kappa_2 \bar{\kappa}_1^{(m)} \bar{\kappa}_{k-1}^{(m)}$.
Similarly, the second term inside the braces of \eqref{eq:beta_k} is given as 
\begin{align}
\frac{k!}{\lambda^{k}}(**) = o\left( (\lambda-1)^2 \right),
\label{eq:leading_order_remaining_2}
\end{align}
which arises from the term of the form $\kappa_k \{ \bar{\kappa}_1^{(m)} \}^k$.
From \eqref{eq:beta_k}, \eqref{eq:leading_order_first_term_2}, and \eqref{eq:leading_order_remaining_2}, 
\begin{align}
\beta_k = o(\lambda-1).
\label{eq:leading_order_beta_k}
\end{align}
It follows from \eqref{eq:leading_order_beta_k} and Assumption~\ref{asm:offspring}-(\ref{asm:iii}) that the $k$th cumulant $\bar{\kappa}_k^{(m)}$ is $o\left( (\lambda-1)^2 \right)$ in the asymptotic regime $\lambda \downarrow 1$.

Via induction, we have proved the lemma.
\end{proof}

Since $\bar{\kappa}_1^{(m)} = \lambda-1 > 0$,
\begin{align}
\frac{t}{K_{\bar{W}_{m}}(t)} = \sum_{k=0}^\infty \tilde{c}_k^{(m)} t^{k}
\label{eq:t_over_K_m}
\end{align} 
exists, and the coefficients $\{ \tilde{c}_k^{(m)} \}$ are given by the following recurrence relation~\cite{Wilf1990}:
\begin{align}
\tilde{c}_0^{(m)} &= \frac{1}{\bar{\kappa}_1^{(m)}}, \\
\tilde{c}_k^{(m)} &= - \frac{1}{\bar{\kappa}_1^{(m)}} \left( \sum_{i=1}^{k} \frac{\bar{\kappa}_{i+1}^{(m)}}{(i+1)!} \tilde{c}_{k-i}^{(m)} \right).
\label{eq:recurrence_relation_tilde_c_k}
\end{align} 
Using Lemma~\ref{lem:1}, $\tilde{c}_0^{(m)}$ and $\tilde{c}_1^{(m)}$ are obtained as
\begin{align}
\tilde{c}_0^{(m)} &= \frac{1}{\lambda-1}, \label{eq:tilde_c_0} \\
\tilde{c}_1^{(m)} &= - \frac{\kappa_2 \sum_{i=1}^{m} \lambda^{i-1}}{2 \lambda^{m+1}},  \label{eq:tilde_c_1}
\end{align} 
respectively.

Next, we prove the following lemma.
\begin{lem}
\label{lem:2}
Assume Assumption~\ref{asm:offspring}.
Then, for $k \geq 2$, the coefficient $\tilde{c}_k^{(m)}$ given by \eqref{eq:recurrence_relation_tilde_c_k} with $\tilde{c}_0^{(m)} = \frac{1}{\lambda-1}$ is $o(1)$ in the asymptotic regime $\lambda \downarrow 1$.
\end{lem}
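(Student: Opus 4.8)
The plan is to prove the claim by induction on $k\ge2$, tracking the order in $(\lambda-1)$ of every term appearing in the recurrence~\eqref{eq:recurrence_relation_tilde_c_k}. The inputs I would use are exactly the orders established in Lemma~\ref{lem:1} together with the already-computed low-order coefficients: from Lemma~\ref{lem:1} one has $\bar{\kappa}_1^{(m)}=\lambda-1$, $\bar{\kappa}_2^{(m)}=O((\lambda-1)^2)$, and $\bar{\kappa}_j^{(m)}=o((\lambda-1)^2)$ for $j\ge3$; from~\eqref{eq:tilde_c_0} and~\eqref{eq:tilde_c_1} one has $\tilde{c}_0^{(m)}=(\lambda-1)^{-1}$ (singular) and $\tilde{c}_1^{(m)}=O(1)$, all in the limit $\lambda\downarrow1$ with $m$ fixed.

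First I would rewrite~\eqref{eq:recurrence_relation_tilde_c_k} in the form
\begin{align}
\tilde{c}_k^{(m)}=-\sum_{i=1}^{k}\frac{1}{(i+1)!}\,\frac{\bar{\kappa}_{i+1}^{(m)}}{\lambda-1}\,\tilde{c}_{k-i}^{(m)},
\end{align}
and isolate the prefactor $\bar{\kappa}_{i+1}^{(m)}/(\lambda-1)$ attached to each trailing coefficient $\tilde{c}_{k-i}^{(m)}$. By the orders above, this prefactor is $O(\lambda-1)$ for $i=1$ and $o(\lambda-1)$ for $i\ge2$; in either case it is $o(1)$. Since the sum has a fixed number ($k$) of terms, independent of $\lambda$, it suffices to show that each individual summand is $o(1)$, after which the finite-sum bound gives $\tilde{c}_k^{(m)}=o(1)$.

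For the inductive step I would classify the summands by the index $j=k-i$ of the trailing coefficient. When $j\ge2$ the inductive hypothesis gives $\tilde{c}_j^{(m)}=o(1)$, so multiplying by the $o(1)$ prefactor keeps the product $o(1)$. When $j=1$ I would pair $\tilde{c}_1^{(m)}=O(1)$ against the prefactor, which is again $o(1)$, yielding $o(1)$. The delicate case, which I expect to be the main obstacle, is $j=0$: here the singular factor $\tilde{c}_0^{(m)}=(\lambda-1)^{-1}$ appears and could in principle cancel the decay of the prefactor. The key observation is that the $j=0$ summand arises only from $i=k$, so it is paired with $\bar{\kappa}_{k+1}^{(m)}$; since $k\ge2$ forces $k+1\ge3$, Lemma~\ref{lem:1} gives $\bar{\kappa}_{k+1}^{(m)}=o((\lambda-1)^2)$, hence the prefactor is $o(\lambda-1)$ and the product is $o(\lambda-1)\cdot(\lambda-1)^{-1}=o(1)$. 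This is precisely where it matters that the singular coefficient $\tilde{c}_0^{(m)}$ never couples to the low-index cumulant $\bar{\kappa}_2^{(m)}$, which decays only like $(\lambda-1)^2$.

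Finally, the base case $k=2$ follows from the same two-term analysis: the $i=1$ term is $O(\lambda-1)\cdot O(1)=o(1)$ and the $i=2$ term is $o(\lambda-1)\cdot(\lambda-1)^{-1}=o(1)$, so $\tilde{c}_2^{(m)}=o(1)$. Combining the base case with the inductive step, and using that every summand is $o(1)$ while the number of summands is fixed, establishes $\tilde{c}_k^{(m)}=o(1)$ for all $k\ge2$, completing the induction.
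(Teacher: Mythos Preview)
Your proof is correct and follows essentially the same inductive argument as the paper: both use Lemma~\ref{lem:1} to control the orders of $\bar{\kappa}_j^{(m)}$, treat the base case $k=2$ by the two-term expansion, and in the inductive step identify the $i=k$ term (your $j=0$ case) as the one requiring the sharper bound $\bar{\kappa}_{k+1}^{(m)}=o((\lambda-1)^2)$ to offset the singular factor $\tilde{c}_0^{(m)}=(\lambda-1)^{-1}$. Your presentation is slightly more explicit in isolating the prefactor $\bar{\kappa}_{i+1}^{(m)}/(\lambda-1)$ and in flagging the $j=0$ case as the only delicate one, but the substance is identical.
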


\begin{proof}
We prove the lemma via induction on $k$.

For $k=2$, from \eqref{eq:recurrence_relation_tilde_c_k} and \eqref{eq:kappa_1_m}, 
\begin{align}
\tilde{c}_2^{(m)} &= - \frac{1}{\lambda-1} \left( \frac{\bar{\kappa}_{2}^{(m)}}{2!} \tilde{c}_{1}^{(m)} + \frac{\bar{\kappa}_{3}^{(m)}}{3!} \tilde{c}_{0}^{(m)} \right).
\end{align}
Thus, it follows from \eqref{eq:tilde_c_0}, \eqref{eq:tilde_c_1}, and Lemma~\ref{lem:1} that the coefficient $\tilde{c}_2^{(m)}$ is $o(1)$ in the asymptotic regime $\lambda \downarrow 1$, showing that the claim holds for $k=2$.

For $k \geq 3$, assume that, for all $l \in \{ 2, \cdots, k-1 \}$, the coefficient $\tilde{c}_l^{(m)}$ is $o(1)$ in the asymptotic regime $\lambda \downarrow 1$.
The coefficient $\tilde{c}_k^{(m)}$ is given by \eqref{eq:recurrence_relation_tilde_c_k}.
Regarding the sum inside the parentheses on the right-hand side of \eqref{eq:recurrence_relation_tilde_c_k}, it follows from \eqref{eq:recurrence_relation_tilde_c_k}, \eqref{eq:tilde_c_0}, \eqref{eq:tilde_c_1}, the inductive hypothesis, and Lemma~\ref{lem:1} that the sum is $o(\lambda-1)$, which arises from $\frac{\bar{\kappa}_{k+1}^{(m)}}{(k+1)!} \tilde{c}_{0}^{(m)}$.
Therefore, by \eqref{eq:recurrence_relation_tilde_c_k}, the coefficient $\tilde{c}_k^{(m)}$ is $o(1)$.

Via induction, we have proved the claim.
\end{proof}

Using \eqref{eq:t_over_K_m}, $h_m(t)$ defined by \eqref{eq:def_h_n} can be written as
\begin{align}
h_m(t) = \left( \sum_{k=0}^\infty \tilde{c}_k^{(m)} t^{k-1} \right) - \frac{1}{\lambda-1} \frac{1}{t} + \frac{\kappa_2^*}{2\lambda(\lambda-1)}.
\label{eq:def_h_n_2}
\end{align}
Comparing \eqref{eq:def_h_n_2} with \eqref{eq:power_series_h_n}, one obtains the following correspondence between the coefficients:
\begin{align}
c_0 &= \tilde{c}_0^{(m)} - \frac{1}{\lambda-1} = 0, \tag{\ref{eq:c_0}} \\
c_1 &= \tilde{c}_1^{(m)} + \frac{\kappa_2^*}{2\lambda(\lambda-1)} \nonumber \\
&= \frac{\kappa_2^*}{2\lambda(\lambda-1)} - \frac{\kappa_2 \sum_{i=1}^{m} \lambda^{i-1}}{2 \lambda^{m+1}} , \tag{\ref{eq:c_1}} \\
c_k &= \tilde{c}_k^{(m)}, \quad k\geq 2. \label{eq:c_k}
\end{align}
Therefore, it follows from Lemma~\ref{lem:2} that the coefficient $c_k$ for $k \geq 2$ is $o(1)$ in the asymptotic regime $\lambda \downarrow 1$.

\bibliography{Reference_AMoPGWP}

\end{document}